\newtheorem{remark}[theorem]{Remark}
\newcommand{\norm}[1]{\left\Vert#1\right\Vert}
\newcommand{\Real}{\mathbb R}
\newcommand{\Exp}{\mathrm{Exp}}
\newcommand{\Log}{\mathrm{Log}}
\renewcommand{\thefootnote}{\fnsymbol{footnote}} 	
\def\argmin{\mathop{\rm argmin}}
\def\argmax{\mathop{\rm argmax}}
\def\tr{\mbox{trace}}
\def\half{\frac{1}{2}}
\def\diag{\mbox{diag}}
\def\symp{\mbox{Sym}^+}
\def\sym{\mbox{Sym}}
\def\SO{\mbox{SO}}
\def\O{\mbox{O}}
\def\GL{\mbox{GL}}
\def\asym{\mathfrak{so}}
\def\so{\mathfrak{so}}
\def\diagp{\mbox{Diag}^+}
\def\diag{\mbox{Diag}}
\def\av{\mathbf a}
\def\ev{\mathbf e}
\def\gf{\mathfrak g}
\def\ad{\mbox{ad}}
\newcommand{\Dc}{\mathcal{D}}
\newcommand{\Ec}{\mathcal{E}}
\newcommand{\Jc}{\mathcal{J}}
\newcommand{\Kc}{\mathcal{K}}
\newcommand{\Rc}{\mathcal{R}}
\newcommand{\Sc}{\mathcal{S}}
\def\1v{\mathbf 1}
\def\0v{\mathbf 0}
\def\Id{ I}
\newcommand{\boldsig}{\mbox{\boldmath $\sigma$ \unboldmath}
           \mbox{\hspace{-.05in}}}
\title{Scaling-rotation distance and interpolation of symmetric positive-definite matrices \thanks{This work was
supported by NIH grant R21EB012177 and NSF grant DMS-1307178.}}
\author{Sungkyu Jung\footnotemark[2]
\and Armin Schwartzman\footnotemark[3]
\and David Groisser\footnotemark[4]}
\begin{document}
\maketitle
\renewcommand{\thefootnote}{\fnsymbol{footnote}}
\footnotetext[2]{Department of Statistics, University of Pittsburgh, Pittsburgh, PA 15260, USA (\email{sungkyu@pitt.edu}).}
\footnotetext[3]{Department of Statistics, North Carolina State University, Raleigh, NC 27695, USA (\email{aschwar@ncsu.edu}).}
\footnotetext[4]{Department of Mathematics, University of Florida, Gainesville, FL 32611, USA (\email{groisser@ufl.edu}).}
\renewcommand{\thefootnote}{\arabic{footnote}}

\slugger{simax}{xxxx}{xx}{x}{x--x}

\begin{abstract}
We introduce a new geometric framework for the set of symmetric positive-definite (SPD) matrices, aimed to characterize deformations of SPD matrices by individual scaling of eigenvalues and rotation of eigenvectors of the SPD matrices.
To characterize the  deformation, the eigenvalue-eigenvector decomposition is used to find alternative representations of   SPD matrices, and to form a Riemannian manifold so that scaling and rotations of SPD matrices are captured by geodesics on this manifold.
The problems of non-unique eigen-decompositions and eigenvalue multiplicities are addressed by finding minimal-length geodesics, which gives rise to a distance and an interpolation method for SPD matrices.
Computational procedures to evaluate the minimal  scaling--rotation deformations and distances are provided for the most useful cases of $2 \times 2$ and $3 \times 3$ SPD matrices.
In the new geometric framework, minimal scaling--rotation curves  interpolate eigenvalues at constant logarithmic rate, and eigenvectors at constant angular rate. In the context of diffusion tensor imaging,  this results in better behavior of
 the trace, determinant and fractional anisotropy of interpolated SPD matrices in typical cases.
\end{abstract}

\begin{keywords} symmetric positive-definite matrices, eigen decomposition, Riemannian distance, geodesics, diffusion tensors.
\end{keywords}

\begin{AMS}
15A18, 15A16, 53C20, 53C22, 57S15, 22E30
\end{AMS}

\pagestyle{myheadings}
\thispagestyle{plain}
\markboth{S. JUNG, A. SCHWARTZMAN AND D. GROISSER}{SCALING--ROTATION OF SPD MATRICES}

\section{Introduction}

The analysis of symmetric positive-definite (SPD) matrices as data objects arises in many contexts. A prominent example is diffusion tensor imaging (DTI), which is a widely-used technique that measures the diffusion of water molecules in a biological object \cite{Basser1994,LeBihan2001,Alexander2005}. The diffusion of water is characterized by a 3D tensor, which is a $3 \times 3$ SPD matrix. The SPD matrices also appear in other contexts of tensor computing \cite{Pennec2006}, tensor-based morphometry \cite{Lepore2008} and as covariance matrices \cite{vemulapalli2015riemannian}.
In recent years statistical analyses of SPD matrices have been received great attention \cite{Zhu2009,Schwartzman2008,Schwartzman2008a,Schwartzman2010,Moakher2011,Yuan2012,Osborne2013}.

The main challenge in the analysis of SPD matrices is that the set of $p \times p$ SPD matrices, $\symp(p)$, is a proper open subset of a real matrix space, so it is not a vector space. This has led researchers to consider alternative geometric frameworks  to handle analytic and statistical tasks for SPD matrices.
The most popular framework is a Riemannian framework, where the set of SPD matrices is endowed with an affine-invariant Riemannian metric \cite{Moakher2005,Pennec2006,Lenglet2006,Fletcher2007}. The Log-Euclidean metric, discussed in \cite{Arsigny2007}, is also widely used, because of its simplicity. \cite{Dryden2009} lists these popular approaches including the Cholesky decomposition-based approach of \cite{Wang2004} and their own approach which they call the Procrustes distance. \cite{Bonnabel2009} proposed a different Riemannian approach for symmetric positive {semidefinite} matrices of fixed rank.

Although these approaches are powerful in generalizing statistics to SPD matrices, they are not easy to interpret in terms of SPD matrix deformations. In particular, in the context of DTI, tensor changes are naturally characterized by changes in diffusion orientation and intensity, but  the above frameworks do not provide such an interpretation.
\cite{Schwartzman2006} proposed a scaling--rotation curve in $\symp(p)$, which is interpretable as rotation of diffusion directions and scaling of the main modes of diffusivity.
In this paper we develop a novel framework to formally characterize scaling--rotation deformations between SPD matrices and introduce a new distance, called here the scaling--rotation distance, defined by the minimum amount of rotation and scaling needed to deform one SPD matrix into another.

To this aim, an alternative representation of  $\symp(p)$, obtained from the decomposition of each SPD matrix into an eigenvalue matrix and eigenvector matrix, is identified as a Riemannian manifold.
This manifold, a generalized cylinder embedded in a higher-dimensional matrix space, is easy to endow with a Riemannian geometry.  A careful analysis is provided to handle the case of equal eigenvalues and, more generally, the non-uniqueness of the eigen-decomposition.
We show that the scaling--rotation curve corresponds to geodesics in the new geometry, and characterize the family of geodesics. A minimal deformation of SPD matrices in terms of the smallest amount of scaling and rotation is then found by a minimal scaling--rotation curve, through a minimal-length geodesic. Sufficient conditions for the uniqueness of minimal curves are given.

The proposed framework not only provides a minimal deformation, but also yields a distance between SPD matrices. This distance function is a semi-metric on $\symp(p)$ and invariant to simultaneous rotation, scaling and inversion of SPD matrices. The invariance to matrix inversion is particularly desirable in analysis of DTI data, where both large and small diffusions are unlikely \cite{Arsigny2007}. While these invariance properties are also found in other frameworks \cite{Moakher2005,Pennec2006,Lenglet2006,Fletcher2007,Arsigny2007}, the proposed distance is directly interpretable in terms of the relative scaling of eigenvalues and rotation angle between eigenvector frames of two SPD matrices.

For $\symp(3)$, other authors \cite{collard2012anisotropy,yang2012feature} have proposed dissimilarity-measures and interpolation schemes based on the same general idea as ours, \emph{i.e.}, separating the scaling and rotation of SPD matrices. Their deformations of SPD matrices can be similar to ours in many cases, thus enjoying similar interpretability.
But while  \cite{collard2012anisotropy,yang2012feature}  mainly focused on the $p =3$ case, our work is more flexible by allowing \emph{unordered} and \emph{equal} eigenvalues. We discuss the importance of this later in Section~\ref{sec:minimalscrotcurve}.


The proposed geometric framework for analysis of SPD matrices is viewed as an important first step to develop statistical tools for SPD matrix data that will inherit the interpretability and the advantageous regular behavior of the scaling--rotation curve. Development of tools similar to those already existing for other geometric framework, such as bi- or tri-linear interpolations \cite{Arsigny2007}, weighted geometric means and spatial smoothing \cite{Moakher2005,Dryden2009,Carmichael2013}, principal geodesic analysis \cite{Fletcher2007}, regression and statistical testing \cite{Zhu2009,Schwartzman2008a, Schwartzman2010,Yuan2012}, will also be needed in the new framework, but we do not address them here.
The proposed framework also has potential future applications beyond diffusion tensor study such as high-dimensional factor models \cite{forni2000generalized} and classification among SPD matrices \cite{Jung2014,vemulapalli2015riemannian}. Algorithms allowing fast computation or approximation of the proposed distance may be needed, but we will leave this  as a subject of future work.
The current paper focuses only on analyzing minimal scaling--rotation curves and the distance defined by them.

The main advantage of the new geometric framework for SPD matrices is that  minimal scaling--rotation curves  interpolate eigenvalues at constant logarithmic rate, and eigenvectors at constant angular rate, with a minimal amount of scaling and rotation.
These are desirable characteristics in fiber-tracking in DTI \cite{Batchelor2005}. Moreover, scaling--rotation curves exhibit regular evolution of determinant, and in typical cases, of fractional anisotropy and mean diffusivity.
Linear interpolation of two SPD matrices by the usual vector operation is known to have a \emph{swelling} effect: the determinants of interpolated SPD matrices are larger than those of the two ends. This is physically unrealistic in DTI \cite{Arsigny2007}. The Riemannian frameworks in \cite{Moakher2005,Pennec2006,Arsigny2007} do not suffer from the {swelling} effect, which was in part the rationale to favor the more sophisticated geometry. However, all of these  exhibit a \emph{fattening} effect: interpolated SPD matrices are more isotropic than the two ends \cite{Chao2009681}. The Riemannian frameworks also produce an unpleasant \emph{shrinking} effect: the trace of interpolated SPD matrices are smaller than those of the two ends \cite{Batchelor2005}. The scaling--rotation framework, on the other hand, does not suffer from the fattening effect and produces a smaller shrinking effect with no shrinking at all in the case of pure rotations.

The rest of the paper is organized as follows.
Scaling--rotation curves are formally defined in Section~\ref{sec:ScalingRotation}.
 Section~\ref{sec:minimalscrotcurve} is devoted to precisely characterizing minimal scaling--rotation curves between two SPD matrices and the distance obtained accordingly. The cylindrical representation of $\symp(p)$ is introduced to handle the non-uniqueness of the eigen-decomposition and repeated eigenvalue cases.
 Section~\ref{sec:computation} provides  details for the computation of the distance and curves for the special but most commonly useful cases of $2\times 2$ and $3 \times 3$ SPD matrices.
In Section~\ref{sec:interpolation}, we highlight the advantageous regular evolution of the scaling--rotation interpolations of SPD matrices. 
 Technical details including proofs of theorems are contained in Appendix.

\section{Scaling--rotation curves in $\symp(p)$}\label{sec:ScalingRotation}


An SPD matrix $M \in \symp(p)$ can be identified with an ellipsoid in $\Real^p$ (ellipse if $p = 2$). In particular, the surface coordinates $x \in \Real^p$ of the ellipsoid corresponding to $M$ satisfy $x'M^{-1}x = 1$. The semi-principal axes of the ellipsoid are given by eigenvector and eigenvalue pairs of $M$.
Fig.~\ref{fig:scarotdeformationsex}  illustrates some SPD matrices in $\symp(3)$ as ellipsoids in $\Real^3$.
Any deformation of the SPD matrix $X$ to another SPD matrix can be achieved by the combination of two operations:
\begin{remunerate}
\item individual scaling of the eigenvalues, or stretching (shrinking) the ellipsoid along principal axes;
\item rotation of the eigenvectors, or rotation of the ellipsoid.
\end{remunerate}

Denote an eigen-decomposition of $X$ by $X = UDU'$, where the columns of $U \in \SO(p)$ consist of orthogonal eigenvectors of $X$, and $D \in \diagp(p)$ is the diagonal matrix of positive eigenvalues that need not be ordered. Here, $\SO(p)$ denotes the set of $p\times p$ real rotation matrices.
To parameterize scaling and rotation, the matrix exponential and logarithm, defined in Appendix~\ref{sec:preliminaries}, are used.
A continuous scaling of the eigenvalues in $D$ at a constant proportionality rate can be described by a curve  $D(t) = \exp(L t) D$ in $\diagp(p)$ for some $L=\mbox{diag}(l_1,\ldots,l_p) \in \diag(p)$, $t \in \Real$, where $\diag(p)$ is the set of all $p\times p$ real diagonal matrices.
Since $\frac{d}{dt}D(t) = LD(t)$, we call $L$ the \emph{scaling velocity}. Each element $l_i$ of $L$ provides the scaling factor for the $i$th coordinate $d_i$ of $D$.
 A rotation of the eigenvectors in the ambient space at a constant ``angular rate'' is described by a curve $U(t) = \exp(At)U$ in $\SO(p)$, where $A \in \asym(p)$, the set of antisymmetric matrices (the Lie algebra of $\SO(p)$). Since $\frac{d}{dt}U(t) = AU(t)$, we call $A$ the \emph{angular velocity}.
Incorporating the scaling and rotation together results in the general scaling--rotation curve (introduced in \cite{Schwartzman2006}),
\begin{equation}\label{eq:sc-rot-curve}
\chi(t) = \chi(t; U,D,A,L) = \exp(At)U D\exp(Lt) U'\exp(A't) \in \symp(p), \quad t \in \Real.
\end{equation}

\begin{figure}[tb!]
 \begin{center}
 \includegraphics[width=0.65\textwidth]{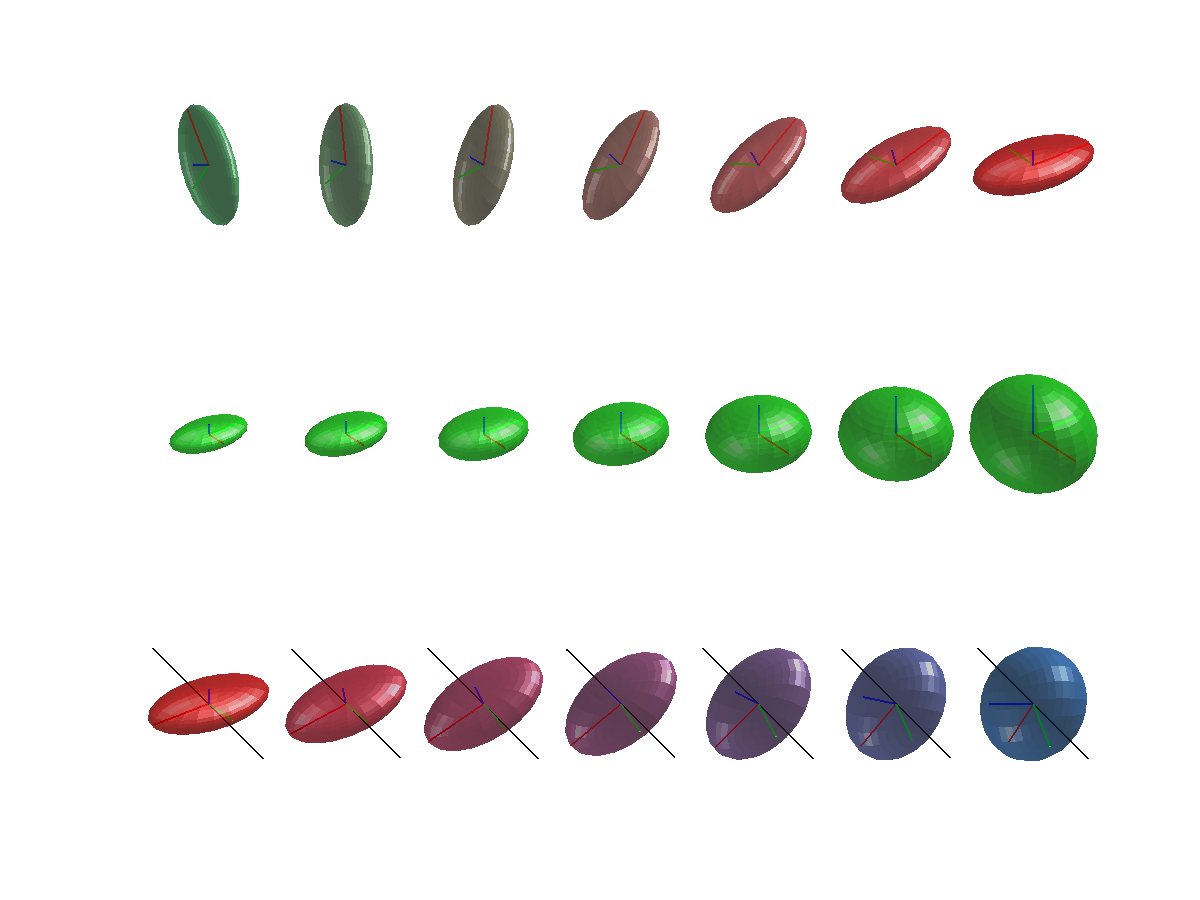}
\end{center}
  \caption{Scaling--rotation curves in $\symp(3)$: (top) pure rotation with rotation axis normal to the screen, (middle) individual scaling along principal axes without any rotation, and (bottom) simultaneous scaling and rotation. The rotation axis is shown as a black line segment. The ellipsoids are colored by the direction of principal axes, to help visualize the effect of rotation.
  \label{fig:scarotdeformationsex}}
\end{figure}

The scaling--rotation curve characterizes deformations of $X = \chi(0) \in \symp(p)$ so that  the ellipsoid corresponding to $X$ is smoothly  rotated, and each principal axis stretched and shrunk, as a function of $t$. For $p = 2, 3$, the matrix $A$ gives the axis and angle of rotation (\emph{cf}. Appendix~\ref{sec:preliminaries}).
%
Fig.~\ref{fig:scarotdeformationsex} illustrates discretized trajectories of scaling--rotation curves in $\symp(3)$, visualized by the corresponding ellipsoids.
These curves in general do not coincide with straight lines or geodesics in other geometric frameworks such as \cite{Wang2004,Moakher2005,Pennec2006,Lenglet2006,Fletcher2007,Arsigny2007,Dryden2009,collard2012anisotropy,yang2012feature}.
In section \ref{sec:minimalscrotcurve}, we introduce a Riemannian metric which reproduces these scaling--rotation curves as images of geodesics.


Given two points $X,Y \in \symp(p)$, we will define the distance between them as the length of a scaling--rotation curve $\chi(t)$ that joins $X$ and $Y$. Thus it is of interest to identify the parameters of the curve $\chi(t)$ that starts at $X= \chi(0)$ and meets $Y = \chi(1)$ at $t=1$.
  From eigen-decompositions of $X$ and $Y$, $X = UDU'$, $Y = V\Lambda V'$, we could equate $\chi(1)$ and $V\Lambda V'$,
and naively solve for eigenvector matrix and eigenvalue matrix separately, leading to
$ A = \log(VU') \in \asym(p)$, $L = \log(D^{-1}\Lambda) \in \diag(p).$
This solution is generally correct, if the eigen-decompositions of X and Y are chosen carefully (see Theorem~\ref{thm:main_result}). The difficulty is that there are many other scaling--rotation curves that also join $X$ and $Y$, due to the non-uniqueness of eigen-decomposition. Thus it is required to consider a \emph{minimal} scaling--rotation curve among all such curves.
%

\section{Minimal scaling--rotation curves  in $\symp(p)$ }\label{sec:minimalscrotcurve}
\subsection{Decomposition of SPD matrices into scaling and rotation components}
An SPD matrix $X$ can be eigen-decomposed into a matrix of eigenvectors $U \in \SO(p)$ and a diagonal matrix $D \in \diagp(p)$ of eigenvalues. In general, there are many pairs $(U,D)$ such that $X = UDU'$. Denote the set of all pairs $(U,D)$ by
$$(\SO \times \diagp)(p) = \SO(p) \times \diagp(p).$$

We use the following notations:
\begin{definition} For all pairs $(U,D) \in (\SO \times \diagp)(p)$ such that $X = UDU'$,
\begin{romannum}
\item An eigen-decomposition $(U,D)$ of $X$ is called an (unobservable) \emph{version} of   $X$ in $(\SO \times \diagp)(p)$;
\item $X$ is the \emph{eigen-composition} of $(U,D)$, defined by a mapping $c: (\SO \times \diagp)(p) \to \symp(p)$, $c(U,D) = UDU' = X$.
\end{romannum}
\end{definition}
The many-to-one mapping $c$ from $(\SO \times \diagp)(p)$ to $\symp(p)$ is surjective. (The symbol $c$ stands for \emph{composition}.)
Fig.~\ref{fig:representations} illustrates the relationship between an SPD matrix and its many versions (eigen-decompositions). While $\symp(p)$ is an open cone, the set $(\SO \times \diagp)(p)$ can be understood as the boundary of a generalized cylinder, \emph{i.e.},  $(\SO \times \diagp)(p)$ forms a shape of cylinder whose cross-section is ``spherical'' ($\SO(p)$) and the centers of the cross section are on the positive orthant of $\Real^p$, \emph{i.e.}, $\diagp(p)$. The set $(\SO \times \diagp)(p)$ is a complete Riemannian manifold, as described below  in Section~\ref{sec:RiemannianFramework}.

\begin{figure}[tb]
\centering
 \includegraphics[width=0.8\textwidth]{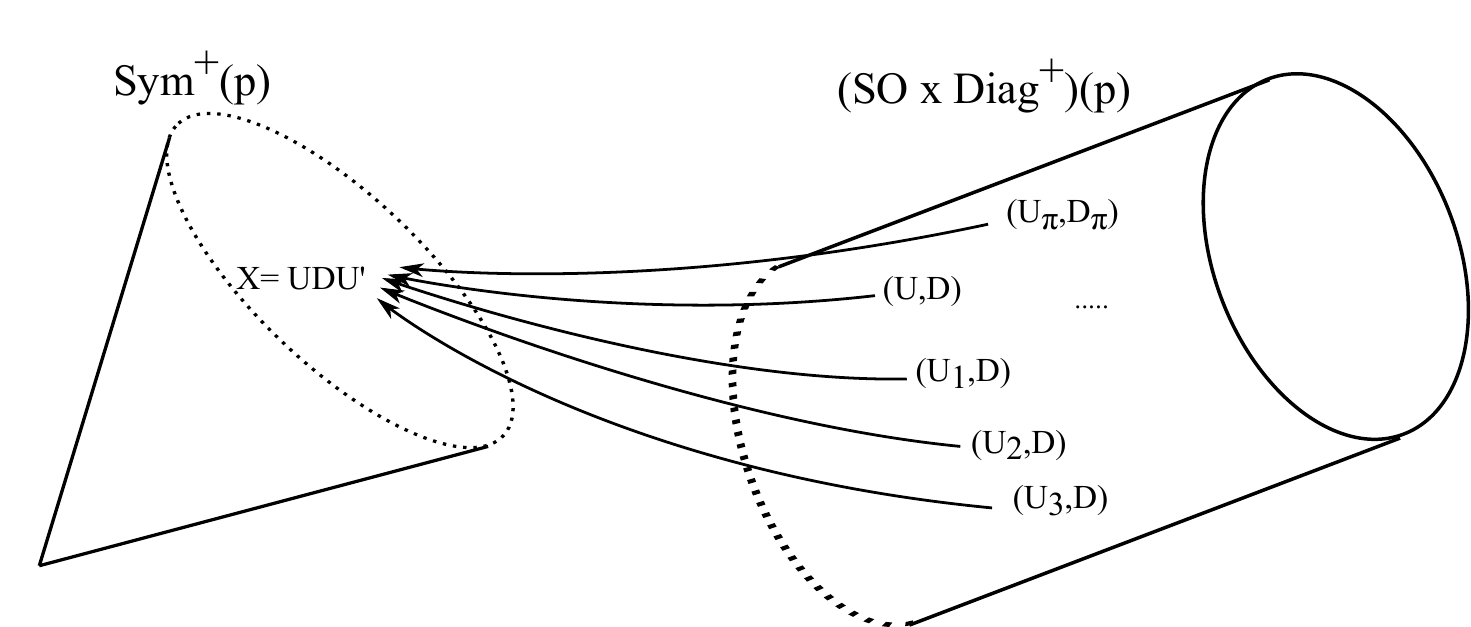}
 \caption{An SPD matrix $X$ and its versions in $(\SO \times \diagp)(p)$. The eigen-composition of $(U,D)$ is depicted as a many-to-one mapping  from $(\SO \times \diagp)(p)$ to $\symp(p)$.\label{fig:representations}}
\end{figure}

Note that considering $(\SO \times \diagp)(p)$ as the set of all possible eigen-decompositions is an important relaxation of the usual ordered eigenvalue assumption.
We will see in the subsequent sections that this is necessary to describe the desired family of deformations. As an example, the scaling--rotation curve depicted at the middle row of Fig.~\ref{fig:scarotdeformationsex} is made possible by allowing \emph{unordered} eigenvalues. Moreover, our manifold $(\SO \times \diagp)(p)$ has no boundaries, which not only allows us to handle \emph{equal} eigenvalues but also makes the applied Riemannian geometry simple.

We first discuss which elements of $(\SO \times \diagp)(p)$ are the versions of any given SPD matrix $X$.

\begin{definition}\label{defn:permutation&signchange}
Let $S_p$ denote the symmetric group, i.e., the group of permutations of the set $\{1,\ldots,p\}$, for $p \ge 2$. A permutation $\pi \in S_p$ is a bijection $\pi: \{1,\ldots,p\} \to \{1,\ldots,p\}$.
Let $\boldsig_p = \{(\epsilon_1,\ldots,\epsilon_p) \in \Real^p : \epsilon_i \in \{ \pm 1\}, 1\le i \le p \}$ and $\boldsig_p^+ = \{ (\epsilon_1,\ldots,\epsilon_p) \in \boldsig_p : \prod_{i=1}^p \epsilon_i = 1\}.$
\begin{romannum}
\item
For a permutation $\pi \in S_p$, its \emph{permutation matrix} is the $p \times p$ matrix $P_\pi^0$ whose entries are all 0 except that in column $i$ the entry $\pi(i)$ equals $1$. Moreover, define
$P_\pi = P_\pi^0$ if $\det(P_\pi^0) = 1$,
$P_\pi = \begin{bmatrix}
-1 &\0v' \\
\0v & I_{p-1} \\
\end{bmatrix}P_\pi^0$ if $\det(P_\pi^0) = -1$.
\item
For $\sigma = (\epsilon_1,\ldots,\epsilon_p) \in \boldsig_p$, its associated \emph{sign-change matrix} is the $p \times p$ diagonal matrix $I_\sigma$ whose $i$th diagonal element is $\epsilon_i$.  If $\sigma \in \boldsig_p^+$, we call $I_\sigma$ an \emph{even sign-change matrix}.
\item For any $D \in \diag(p)$, the \emph{stabilizer subgroup} of $D$ is $G_D = \{R \in \SO(p) : RDR' = D \}$.
\end{romannum}
\end{definition}
%
%
%

For any $\sigma \in \boldsig_p^+$, $\pi \in S_p$, $P_\pi, I_\sigma \in \SO(p)$. The number of different permutations (or sign-changes) is $p!$ (or $2^{p-1}$, respectively). These two types of matrices provide operations for permutation and sign-changes in eigenvalue decomposition. In particular,
for $U \in \SO(p)$, a column-permuted $U$, by a permutation $\pi \in S_p$, is $U P_\pi' \in \SO(p)$, and a sign-changed $U$, by $\sigma \in \boldsig_p^+$, is $U I_\sigma  \in \SO(p)$.
For  $D = \mbox{diag}(d_1,\ldots,d_p)$, define $\pi \cdot D = \mbox{diag} (d_{\pi^{-1}(1)},\ldots,d_{\pi^{-1}(p)}) \in \diag(p)$ as a diagonal matrix whose elements are permuted by $\pi \in S_p$. $D_\pi:= P_\pi D P_\pi'$ is exactly the diagonal matrix $\pi \cdot D$. The same is true if $P_\pi$ is replaced by $I_\sigma P_\pi$, for any $\sigma \in \boldsig_p$. Finally, for any $\sigma \in \boldsig_p^+$, $\pi \in S_p$, there exists $\sigma^0 \in \boldsig_p$ such that $I_{\sigma^0} P_\pi^0 = I_\sigma P_\pi$.


%

\begin{theorem}\label{thm:versions}
Every version of $X = UDU'$ is of the form
$(U^*,D^*) = (URP_\pi', D_\pi),$
for $R \in G_D$, $\pi \in S_p$, and $D_\pi = P_\pi D P_\pi'$.
Moreover, if the eigenvalues of $X$ are all distinct, every $R \in G_D$ is an even sign-change matrix $I_\sigma$, $\sigma \in \boldsig_p^+$.
\end{theorem}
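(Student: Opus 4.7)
The plan is to reverse-engineer the constraints on $(U^*,D^*)$ from the equation $U^* D^* (U^*)' = UDU'$, separating the diagonal (eigenvalue) constraint from the orthogonal (eigenvector) constraint.

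First, I would use the uniqueness (up to multiplicity) of the spectrum. Since $D$ and $D^*$ are both diagonal matrices whose diagonal entries are the eigenvalues of $X$ counted with multiplicity, their multisets of diagonal entries coincide. Hence there exists a permutation $\pi \in S_p$ such that $D^* = \pi \cdot D$. By the observation stated just before the theorem, $\pi \cdot D = P_\pi D P_\pi'$, so $D^* = D_\pi$. Note that although some work is needed to arrange $P_\pi \in \SO(p)$ rather than only $\O(p)$ (this is exactly why the definition absorbs a sign flip into $P_\pi$ when $\det P_\pi^0=-1$), once this is done we have $P_\pi \in \SO(p)$, and the equality $P_\pi D P_\pi' = \pi \cdot D$ is unaffected because the absorbed sign flip preserves the diagonal matrix $D$ under conjugation.

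Second, I would convert the eigenvector constraint into a statement about the stabilizer $G_D$. Setting $W := U^* P_\pi \in \SO(p)$, I compute
\begin{equation*}
W D W' \;=\; U^* P_\pi D P_\pi' (U^*)' \;=\; U^* D^* (U^*)' \;=\; X \;=\; U D U'.
\end{equation*}
Therefore $(U'W)\, D\, (U'W)' = D$, so $R := U'W$ lies in $G_D \cap \SO(p)=G_D$. Solving for $U^*$ gives $U^* = W P_\pi' = U R P_\pi'$, which is the asserted form. Conversely, any such pair is obviously a version, so the description is tight.

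Finally, for the distinct-eigenvalues statement, I would use the standard commuting-matrix argument: $R \in G_D$ means $R D = D R$, and a matrix commuting with a diagonal matrix whose diagonal entries are all distinct must itself be diagonal. Combined with $R \in \SO(p)$, this forces $R = I_\sigma$ for some $\sigma = (\epsilon_1,\ldots,\epsilon_p) \in \boldsig_p$ with $\epsilon_i \in\{\pm 1\}$, and $\det R = 1$ gives $\prod_i \epsilon_i = 1$, i.e., $\sigma \in \boldsig_p^+$. The main (mild) obstacle throughout is bookkeeping of orientation: ensuring $P_\pi$ and $R$ remain in $\SO(p)$ rather than $\O(p)$, which is precisely what the sign-flip convention in Definition~\ref{defn:permutation&signchange}(i) and the restriction to $\boldsig_p^+$ are designed to handle.
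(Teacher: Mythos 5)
Your proof is correct and follows essentially the same route as the paper's: match the eigenvalue multisets to extract the permutation $\pi$ (so $D^* = D_\pi$), show the leftover rotation $R = U'U^*P_\pi$ stabilizes $D$, and in the distinct-eigenvalue case conclude that $R$ is a diagonal $\pm 1$ matrix with determinant one, i.e.\ an even sign-change matrix. The only cosmetic differences are that the paper first factors $U^* = UV$, $D^* = DL$ and derives $VDLV' = D$ before extracting $\pi$, and it handles the distinct-eigenvalue case by citing a commuting-matrices result, whereas you obtain $R$ diagonal directly from $RD = DR$; both arguments are equivalent.
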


\begin{remark}
{\rm
If the eigenvalues of $X$ are all distinct, there are exactly $p! 2^{p-1}$ eigen-decompositions of $X$. In such a case, all versions of $X$ can be explicitly obtained by application of permutations and sign-changes to any version $(U,D)$ of $X$.
}\end{remark}


\begin{remark} \label{remark:RinRDR=R}
{\rm
If the eigenvalues of  $X$ are not all distinct, there are infinitely many eigen-decompositions of $X$ due to the arbitrary rotation $R$ of eigenvectors. The stabilizer group of $D$, $G_D$, to which $R$ belongs in Theorem~\ref{thm:versions}, does not depend on particular eigenvalues but only on which eigenvalues are equal. More precisely,
for $D = \mbox{diag}(d_1,\ldots,d_p) \in \diagp(p)$, let $\Jc_D$ be the partition of coordinate indices $\{1,\ldots,p\}$ determined by $D$, \emph{i.e.}, for which $i$ and $j$ are in the same block if and only if $d_i = d_j$. A block can consist of non-consecutive numbers.
For a partition $\Jc = \{J_1,\ldots, J_r\}$ with $r$ blocks, let $\{W_1,\ldots, W_r\} = \{\Real^{J_1},\ldots, \Real^{J_r}\}$ denote the corresponding subspaces of $\Real^p$; $x \in \Real^{J_i}$ if and only if the $j$th coordinate of $x$ is $0$ for all $j \notin J_i$.
The stabilizer $G_D$ depends only on the partition $\Jc_D$.
Define  $G_\Jc \subset \SO(p)$ by
\begin{equation}\label{eq:Liesubgroup}
G_\Jc = \{R \in \SO(p) : RW_i = W_i, 1 \le i \le r  \}.
\end{equation}
Then $G_D = G_{\Jc_D}$.
As an illustration, let $D = \mbox{diag}(1,1,2)$. Then $\Jc_D = \{\{1,2\},\{3\}\}$. An example of $R \in G_D$ is a $3 \times 3$ block-diagonal matrix  where the first $2 \times 2$ block is any $R_1 \in \SO(2)$ and the last diagonal element is $r_2 = 1$. Intuitively, $RDR'$ with this choice of $R$ behaves as if the first $2 \times 2$ block of $D$, $D_1$, is arbitrarily rotated. Since $D_1 = I_2$, rotation makes no difference. Another example is given  by setting $R_1 \in \O(2)$ with $\det (R_1) = -1$ and $r_2 = -1$.
%
}\end{remark}

\subsection{A Riemannian framework for scaling and rotation of SPD matrices}\label{sec:RiemannianFramework}

The set of rotation matrices $\SO(p)$ is a $p(p-1)/2$-dimensional smooth Riemannian manifold equipped  with the usual Riemannian inner product for the tangent space \cite[Ch. 18]{Gallier2011}. The set of positive diagonal matrices $\diagp(p)$ is also a $p$-dimensional smooth Riemannian manifold. The set $(\SO \times \diagp)(p)$, being a direct product of two smooth and complete manifolds, is a complete Riemannian manifold  \cite{Small1996,Absil2009}. We state some geometric facts necessary to our discussion.
\begin{lemma}\label{lem:SR1}
\begin{romannum}
\item $(\SO \times \diagp)(p)$ is a differentiable manifold of dimension $p + p(p-1)/2$.
\item $(\SO \times \diagp)(p)$ is the image of $  \asym(p) \times \diag(p)$ under the exponential map
        $ \Exp((A,L)) = (\exp(A),\exp(L))$, $(A,L) \in  \asym(p) \times \diag(p).$
\item The tangent space $\tau(I,I)$ to $(\SO \times \diagp)(p)$ at the identity $(I,I)$ can be naturally identified as a copy of $\asym(p) \times \diag(p)$.
\item The tangent space $\tau(U,D)$ to $(\SO \times \diagp)(p)$ at an arbitrary point $(U,D)$ can be naturally identified as the set $\tau(U,D) = \{(AU,LD): A \in \asym(p), L \in \diag(p)\}$.
\end{romannum}
\end{lemma}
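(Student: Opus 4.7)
The plan is to derive each item of the lemma from standard facts about the two factor manifolds $\SO(p)$ and $\diagp(p)$, combined via the well-known product-manifold theory. I would first briefly recall the smooth and Lie-theoretic structure of each factor: $\SO(p)$ is a compact connected Lie group of dimension $p(p-1)/2$ whose Lie algebra is $\asym(p)$, and on which the matrix exponential $\exp:\asym(p)\to\SO(p)$ is surjective (a consequence of compactness and connectedness, or, concretely, of the real canonical form of antisymmetric matrices, which shows every rotation to be $\exp(A)$ for some $A\in\asym(p)$). Similarly, entrywise $\log$ identifies $\diagp(p)$ with $\diag(p)\cong\Real^p$, so $\diagp(p)$ is a smooth $p$-dimensional manifold and $\exp:\diag(p)\to\diagp(p)$ is a diffeomorphism.

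With these preparations, (i) is just the additivity of dimension under Cartesian products of smooth manifolds. For (ii), since the exponential map of a product Lie group is the product of the two factor exponentials, $\Exp(A,L)=(\exp A,\exp L)$, and each factor exponential is surjective, the product map $\Exp$ is surjective from $\asym(p)\times\diag(p)$ onto $(\SO\times\diagp)(p)$. For (iii), the tangent space at a product point of a product manifold is the direct sum of the factor tangent spaces, and $T_I\SO(p)=\asym(p)$ together with $T_I\diagp(p)=\diag(p)$ gives the identification $T_{(I,I)}(\SO\times\diagp)(p)\cong\asym(p)\times\diag(p)$.

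For (iv), I would use the Lie-group translation argument on each factor. Given a smooth curve $U(t)\in\SO(p)$ with $U(0)=U$, differentiating $U(t)U(t)'=I$ at $t=0$ yields $\dot U(0)U'+U\dot U(0)'=0$, so $A:=\dot U(0)U'\in\asym(p)$ and hence $\dot U(0)=AU$; conversely, for any $A\in\asym(p)$, the curve $t\mapsto\exp(At)U$ lies in $\SO(p)$ and has initial velocity $AU$. This identifies $T_U\SO(p)=\{AU:A\in\asym(p)\}$. An entirely analogous argument using the curve $D(t)=\exp(Lt)D$ identifies $T_D\diagp(p)=\{LD:L\in\diag(p)\}$. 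Taking the direct sum then delivers (iv).

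There is no substantial obstacle here; the main task is merely to assemble standard differential-geometric and Lie-theoretic facts about the two factors and their product, and to be careful that the identifications in (iii) and (iv) are compatible with the explicit form of $\Exp$ used throughout the paper, so that later computations with scaling--rotation curves $\chi(t)$ and their velocity pairs $(A,L)$ are well-founded.
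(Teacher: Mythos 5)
Your proof is correct, and it is essentially the argument the paper relies on: the paper states this lemma without proof, treating it as a collection of standard facts about the product of the Lie group $\SO(p)$ (compact and connected, so $\exp:\asym(p)\to\SO(p)$ is onto, with $T_U\SO(p)=\{AU\}$ by the translation/curve argument) and the manifold $\diagp(p)\cong\Real^p$ via entrywise $\log$, citing standard references. Your assembly of the product-manifold dimension count, surjectivity of the factorwise exponential, and the tangent-space identifications at $(I,I)$ and at a general $(U,D)$ is exactly the standard justification, with no gaps.
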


Our choice of Riemannian inner product at $(U,D)$ for two tangent vectors $(A_1U,L_1D)$ and $(A_2U,L_2D)$ is
\begin{align}
       \langle (A_1U,L_1D), (A_2U,L_2D) \rangle_{(U,D)}
       &=  \frac{k}{2}     \langle U'A_1U, U'A_2U \rangle +
       \langle D^{-1}L_1D, D^{-1}L_2D \rangle  \nonumber\\
       & = \frac{k}{2}\tr(A_1A_2') + \tr(L_1L_2),\quad k>0, \label{eq:RiemmanianMetric}
\end{align}
where  $\langle X,Y \rangle$ for $X,Y \in \GL(p)$ denotes the Frobenius inner product $\langle X,Y \rangle = \tr (XY')$.
 \cite{collard2012anisotropy} used a structure similar to (\ref{eq:RiemmanianMetric}), with the scaling factor $k$ being a function of $D$, to motivate their distance function. We use $k = 1$ for all of our illustrations in this paper. The practical effect on using different values of $k$ is discussed in the supplementary material. The practical effect on using different values of $k$ is discussed in Section \ref{sec:discussion}.
For any fixed $k$, we show that this choice of Riemannian inner product leads to interpretable distances with invariance properties (\emph{cf}.  Proposition~\ref{prop:invariance_geoddist} and Theorem~\ref{thm:1alternative}).

 The exponential map from  a tangent space $\tau(U,D)$  to $(\SO \times \diagp)(p)$ is
    $\Exp_{(U,D)}: \tau(U,D) \to (\SO \times \diagp)(p)$,
$$
\Exp_{(U,D)}((AU,LD)) = (U\exp(U'AU), D \exp(D^{-1}LD))
    = ( \exp(A)U, \exp(L)D   ).
$$
The inverse of exponential map is $\Log_{(U,D)}: (\SO \times \diagp)(p) \to \tau(U,D)$,
$$
  \Log_{(U,D)}((V,\Lambda)) = (U\log(U'V), D \log(D^{-1} \Lambda ))
    = (  \log(VU')U ,  \log(\Lambda D^{-1}) D) .
$$
A geodesic in $(\SO \times \diagp)(p)$ starting at $(U,D)$ with initial direction   $(AU,LD) \in \tau(U,D)$ is parameterized as
\begin{equation}\label{eq:geodesicformula}
\gamma(t) = \gamma(t; U,D, A, L) = \Exp_{(U,D)}( (AUt,LDt) ).
\end{equation}

The  inner product (\ref{eq:RiemmanianMetric}) provides the geodesic distance function on $(\SO \times \diagp)(p)$.
Specifically, the squared geodesic distance from  $(U,D)$ to  $(V,\Lambda)$  is
    \begin{align}
d^2\left((U,D),(V,\Lambda)  \right)
 & =  \langle (AU,LD), (AU,LD) \rangle_{(U,D)} \label{eq:geoddist}\\
 & = k d_{{\rm SO}(p)}(U,V)^2 + d_{\Dc}(D_, \Lambda)^2, \quad k>0,  \nonumber
    \end{align}
where $A = \log(VU')$ , $L = \log(\Lambda D^{-1})$, $d_{{\rm SO}(p)}(U_1,U_2)^2 = \frac{1}{2}\norm{\log(U_2U_1')}_F^2$,
$d_{\Dc} (D_1,D_2)^2 = \norm{\log(D_2 D_1^{-1})}_F^2$, and $\| \ \|_F$ is the Frobenius norm.

The geodesic distance (\ref{eq:geoddist}) is a metric, well-defined for any $(U,D)$ and  $(V,\Lambda) \in (\SO \times \diagp)(p)$, and is the length of the minimal geodesic curve $\gamma(t)$ that joins the two points.
Note that for any two points $(U,D)$ and $(V,\Lambda)$, there are infinitely many geodesics that connect the two points, just like there are many ways of wrapping a cylinder with a string.
There is, however, a unique minimal-length geodesic curve that connects  $(U,D)$ and $(V,\Lambda)$ if $VU'$ is not an involution \cite{Moakher2002}. (A rotation matrix $R$ is an \emph{involution} if $R \neq \Id$ and $R^2 = \Id$.)
For $p = 2,3$, $R$ is an involution if it consists of a rotation through angle $\pi$, in which case there exactly two  shortest-length geodesic curves.
If $VU'$ is an involution, then $V$ and $U$ are said to be antipodal in $\SO(p)$, and the matrix logarithm of $VU'$ is not unique (there is no principal logarithm), but as discussed in Appendix~\ref{sec:preliminaries} $\log(VU')$ means any solution $A$ of $\exp(A) = VU'$ whose Frobenius norm is the smallest among all such $A$.
\begin{proposition}\label{prop:invariance_geoddist}
 The geodesic distance (\ref{eq:geoddist}) on $ (\SO \times \diagp)(p)$ is invariant under simultaneous left or right multiplication by orthogonal matrices, permutations and scaling: For any $R_1,R_2 \in \O(p)$, $\pi \in S_p$ and $S \in \diagp(p)$, and for any $(U,D),(V,\Lambda) \in (\SO \times \diagp)(p)$,
  $ d\left((U,D),(V,\Lambda)  \right)
    = d\left((R_1U R_2,S D_\pi),(R_1V R_2 ,S\Lambda_\pi)  \right).$
\end{proposition}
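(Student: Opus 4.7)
The plan is to split $d^2$ as written into its $\SO(p)$ part and its $\diagp(p)$ part, then observe that the transformation $(U,D)\mapsto(R_1UR_2, SD_\pi)$ affects the two parts independently, and each part is invariant by a short calculation using (a) the behavior of $\log$ under orthogonal conjugation and (b) commutativity of diagonal matrices.

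For the rotation part, I would compute
\[
(R_1VR_2)(R_1UR_2)' \;=\; R_1VR_2R_2'U'R_1' \;=\; R_1(VU')R_1',
\]
so $R_2$ cancels immediately, and the remaining left multiplication by $R_1$ becomes a similarity by an orthogonal matrix. Since $\log(R_1MR_1') = R_1\log(M)R_1'$ (with the same branch choice, because similarity by $R_1$ biject\-ively sends solutions of $\exp(A)=M$ to solutions of $\exp(A')=R_1MR_1'$ preserving Frobenius norm), and since conjugation by $R_1\in\O(p)$ preserves $\|\cdot\|_F$, we get
\[
d_{\SO(p)}(R_1UR_2,\,R_1VR_2)^2 \;=\; \tfrac12\|R_1\log(VU')R_1'\|_F^2 \;=\; d_{\SO(p)}(U,V)^2.
\]
Note the rotation part does not see $S$ or $\pi$ at all. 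The only delicate issue is the antipodal case where $VU'$ is an involution: there $\log(VU')$ denotes any minimum-Frobenius-norm solution of $\exp(A)=VU'$, and the map $A\mapsto R_1AR_1'$ is a Frobenius-norm-preserving bijection on the solution set of $\exp(\cdot)=R_1(VU')R_1'$, so the minimum is unchanged.

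For the diagonal part, I would first use that $S$, $D_\pi$, $\Lambda_\pi$ are all diagonal and hence mutually commuting:
\[
(S\Lambda_\pi)(SD_\pi)^{-1} \;=\; S\Lambda_\pi D_\pi^{-1}S^{-1} \;=\; \Lambda_\pi D_\pi^{-1}.
\]
Then I would unfold $D_\pi=P_\pi DP_\pi'$ and $\Lambda_\pi=P_\pi\Lambda P_\pi'$ and use $P_\pi'P_\pi=I$ to get
\[
\Lambda_\pi D_\pi^{-1} \;=\; P_\pi(\Lambda D^{-1})P_\pi',
\]
so $\log(\Lambda_\pi D_\pi^{-1}) = P_\pi\log(\Lambda D^{-1})P_\pi'$ and Frobenius invariance under the orthogonal conjugation $P_\pi$ yields $d_{\Dc}(SD_\pi, S\Lambda_\pi)=d_{\Dc}(D,\Lambda)$.

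Combining the two pieces in \eqref{eq:geoddist} gives the claim. The computation itself is essentially routine; what I would pay attention to is ensuring that the argument is valid for \emph{all} $(U,D),(V,\Lambda)$, including (i) the case of repeated eigenvalues (no issue, since the formula \eqref{eq:geoddist} never required distinct eigenvalues or a specific choice of version), and (ii) the antipodal case for $VU'$, handled by the bijection argument above. A minor bookkeeping point worth remarking on is that although $R_1,R_2\in\O(p)$ need not lie in $\SO(p)$, the hypothesis that both $(R_1UR_2,SD_\pi)$ and $(R_1VR_2,S\Lambda_\pi)$ belong to $(\SO\times\diagp)(p)$ forces $\det(R_1)\det(R_2)=1$, which is all one needs for the computation to close up inside $\SO(p)$.
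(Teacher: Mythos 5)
Your proposal is correct and follows essentially the same route as the paper's proof: cancel $R_2$, note that $(R_1VR_2)(R_1UR_2)'=R_1(VU')R_1'$ and that conjugation by an orthogonal matrix preserves the Frobenius norm of the logarithm, and use commutativity of diagonal matrices plus permutation conjugation for the $\diagp(p)$ part. The only difference is that you spell out the minimum-norm branch issue in the antipodal case and the $\det(R_1)\det(R_2)$ bookkeeping, which the paper's terse two-line computation leaves implicit.
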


\subsection{Scaling--rotation curves as images of geodesics}

We can give a precise  characterization of  scaling--rotation curves using the Riemannian manifold $(\SO \times \diagp)(p)$.
In particular, any geodesic in $(\SO \times \diagp)(p)$ determines to a scaling--rotation curve in $\symp(p)$.
The geodesic (\ref{eq:geodesicformula}) gives rise to the scaling--rotation curve $\chi(t) = \chi(t; U,D,A,L) \in \symp(p)$  (\ref{eq:sc-rot-curve}), by  the eigen-composition $c \circ \gamma = \chi$.
On the other hand, a scaling--rotation curve $\chi$ corresponds to many geodesics in $(\SO \times \diagp)(p)$.

To characterize the family of geodesics corresponding to a single curve $\chi(t)$, the following notations are used. For a partition $\Jc$ of the set $\{1,\ldots,p\}$, $G_\Jc$ denotes the Lie subgroup of $\SO(p)$ defined in (\ref{eq:Liesubgroup}).
Let $\gf_\Jc$ denote the Lie algebra of $G_\Jc$. Then,
$$\gf_\Jc = \{A \in  \asym(p) : A_{ij} = 0  \mbox{ for } i \not\sim j\}  \subset \asym(p), $$
where $i \not\sim j$ if $i$ and $j$ are   in different blocks of $\Jc$.
For $D \in \diag(p)$, recall from Remark~\ref{remark:RinRDR=R} that $\Jc_D$ is the partition determined by eigenvalues of $D$, $G_D = G_{\Jc_D}$ and define
$\gf_D = \gf_{\Jc_D}$.
For $D, L \in \diag(p)$, let $\Jc_{D,L} $ be the common refinement of $\Jc_D $ and $ \Jc_L$ so that $i$ and $j$ are in the same block of $\Jc_{D,L}$ if and only if $d_i = d_j$ and $l_i = l_j$.
Define $G_{D,L} = G_{\Jc_{D,L}} = G_D \cap G_L$, and let $\gf_{D,L} = \gf_{\Jc_{D,L}} = \gf_D \cap \gf_L$ be the Lie algebra of $G_{D,L}$.
Finally, for $B \in \asym(p)$, let $\ad_B: \asym(p) \to \asym(p)$ be the linear map defined by $\ad_B(C) = [B,C] = BC - CB$.

\begin{theorem}\label{thm:1alternative}
Let $(U,D,A,L)$ be the parameters of a scaling--rotation curve $\chi(t)$ in $\symp(p)$. Let $I$ be a positive-length interval containing $0$.
Then a geodesic $\gamma: I \to (\SO \times \diagp)(p)$ is identified with $\chi$, \emph{i.e.}, $\chi(t)  = c (\gamma(t)),$ for all $t \in I$, if and only if $\gamma(t) = \gamma(t; URP_\pi' ,D_\pi, B, L_\pi)$ for some $\pi \in S_p$, $R \in G_{D,L}$, and $B \in \asym(p)$ satisfying both (i) $\tilde{B} - \tilde{A} \in \gf_{D,L}$, where $\tilde{B} = U'BU$ and $\tilde{A} = U'AU$, and (ii) $(\ad_{\tilde{B}})^j(\tilde{A}) \in \gf_{D,L}$ for all $j \ge 1$.
\end{theorem}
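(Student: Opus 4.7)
My plan is to convert the identity $c(\gamma(t))=\chi(t)$ into a matrix equation in $t$ and to peel off three layers of constraints: an initial condition fixing $(U^*,D^*)$ up to the version ambiguity, a spectrum-matching argument fixing $L^*$, and a Lie-subgroup constraint on a derived curve in $\SO(p)$ that produces the algebraic conditions (i) and (ii).

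Evaluating at $t=0$ forces $U^*D^*(U^*)'=X=UDU'$, so Theorem~\ref{thm:versions} gives $(U^*,D^*)=(URP_\pi',D_\pi)$ for some $\pi\in S_p$ and $R\in G_D$. Substituting into $c(\gamma(t))=\chi(t)$ and conjugating by $\exp(-At)$ produces $q(t)\psi^*(t)q(t)^{-1}=\psi(t)$, where $q(t)=\exp(-At)\exp(Bt)\in\SO(p)$, $\psi(t)=U\exp(Lt)DU'$, and $\psi^*(t)=U^*\exp(L^*t)D^*(U^*)'$. Both sides are real-analytic in $t$, so the identity on $I$ extends to all of $\Real$. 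Matching the spectra of $\psi^*(t)$ and $\psi(t)$ as multisets of analytic curves $t\mapsto d_i e^{l_i t}$ yields a bijection $\tau$ with $(d_i^*,l_i^*)=(d_{\tau(i)},l_{\tau(i)})$. In the repeated-eigenvalue case $\tau$ is not unique, but since $\tau^{-1}$ and $\pi$ lie in the same coset of the $D$-stabilizer, one may relabel so that $\pi=\tau^{-1}$; this forces $L^*=L_\pi$ while keeping $R\in G_D$.

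With $L^*=L_\pi$ and $R\in G_D$, the identity $\psi^*(t)=\psi(t)$ simplifies to $RLR'=L$, so $R\in G_D\cap G_L=G_{D,L}$. The remaining identity now reduces to $q(t)\psi(t)=\psi(t)q(t)$ for every $t$. Conjugating by $U$ and letting $\tilde q(t)=U'q(t)U$, this requires $\tilde q(t)$ to commute with $\exp(Lt)D$ for all $t$. For generic $t$ the coincidences of the diagonal entries of $\exp(Lt)D$ match exactly the partition $\Jc_{D,L}$, so the $\SO(p)$-centralizer of $\exp(Lt)D$ is $G_{D,L}$; by continuity $\tilde q(t)\in G_{D,L}$, equivalently $q(t)\in H:=UG_{D,L}U^{-1}$, for all $t\in\Real$.

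The last step is to express $q(t)\in H$ for all $t$ as conditions on $A$ and $B$. Using that $A$ commutes with $\exp(\pm At)$, a direct computation gives
\begin{equation*}
q(t)^{-1}q'(t)=\exp(-Bt)(B-A)\exp(Bt)=\exp(-t\,\ad_B)(B-A)=(B-A)+\sum_{j\ge 1}\frac{(-1)^{j+1}t^j}{j!}\,\ad_B^j(A),
\end{equation*}
where $\ad_B^j(B-A)=-\ad_B^j(A)$ for $j\ge 1$ since $\ad_B(B)=0$. Because $q(0)=I$, the curve stays in the Lie subgroup $H$ iff $q(t)^{-1}q'(t)\in\mathfrak h:=U\gf_{D,L}U^{-1}$ for every $t$, and since $\mathfrak h$ is closed each coefficient in the $t$-expansion must lie in $\mathfrak h$. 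Conjugating by $U$ and using $U'\ad_B^j(A)U=\ad_{\tilde B}^j(\tilde A)$ translates these coefficient conditions into (i) $\tilde B-\tilde A\in\gf_{D,L}$ and (ii) $\ad_{\tilde B}^j(\tilde A)\in\gf_{D,L}$ for all $j\ge 1$. All the implications reverse, yielding the converse. I expect the main obstacle to be the bookkeeping around repeated eigenvalues: showing that one can always simultaneously arrange $L^*=L_\pi$ and $R\in G_{D,L}$ by an appropriate relabelling within $\mbox{Stab}_D$, and verifying that the centralizer argument at non-generic $t$ closes up correctly by a closedness-and-continuity argument rather than direct case analysis.
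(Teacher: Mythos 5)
Your overall route differs from the paper's (which applies Theorem~\ref{thm:versions} at \emph{every} time $t$, with time-dependent $R(t)$ and $\pi_t$, and then invokes a separate ODE lemma, Lemma~\ref{lem:ABequivalence}), and several of your steps are sound: the reduction to the single identity $q(t)\psi^*(t)q(t)^{-1}=\psi(t)$, the analytic extension to all of $\Real$, the spectral matching and relabelling inside the stabilizer of $D$ that gives $L^*=L_\pi$ while keeping $R\in G_D$, the computation $q(t)^{-1}q'(t)=\exp(-t\,\ad_B)(B-A)$ with coefficient extraction (a cleaner version of the paper's Lemma~\ref{lem:ABequivalence}), and the converse direction. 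The gap is the sentence ``With $L^*=L_\pi$ and $R\in G_D$, the identity $\psi^*(t)=\psi(t)$ simplifies to $RLR'=L$.'' Nothing established up to that point gives $\psi^*(t)=\psi(t)$: what you have is only the conjugation identity $q(t)\psi^*(t)q(t)^{-1}=\psi(t)$, and you are implicitly splitting it into the two separate statements $\psi^*\equiv\psi$ and $[q(t),\psi(t)]=0$. That split is precisely the nontrivial content of the necessity direction. Spectral matching only says $(D^*,L^*)$ is a permuted copy of $(D,L)$; it does not force the eigenframes to agree. Indeed $\psi^*(t)=UR\exp(Lt)DR'U'$ equals $\psi(t)=U\exp(Lt)DU'$ if and only if $R$ commutes with $\exp(Lt)D$ for all $t$, i.e.\ $R\in G_{D,L}$ --- which is exactly the conclusion you are trying to reach (take $D=\mbox{diag}(1,1,2)$, $L=\mbox{diag}(1,2,3)$, $R\in G_D$ a rotation in the $1$--$2$ plane: then $\psi^*(t)\neq\psi(t)$ for $t\neq 0$). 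So the argument is circular at its crucial point, and note that $t=0$ is typically a ``bad'' time ($\Jc_D\neq\Jc_{D,L}$ whenever $D$ has a repeated eigenvalue that $L$ splits), so you cannot obtain $R\in G_{D,L}$ from a centralizer condition evaluated directly at $t=0$.

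The gap is repairable with the tool you deploy one step too late. From $q(t)\psi^*(t)q(t)^{-1}=\psi(t)$, for all $t$ outside a finite set the eigenvalue coincidences of $\exp(Lt)D$ realize exactly the partition $\Jc_{D,L}$ (the paper's Lemma~\ref{lem:Tbad_finite}), and matching eigenspaces of the two sides gives $\tilde q(t)R\in G_{\Jc_{D,L}}$ for those $t$, where $\tilde q(t)=U'q(t)U$. Since $G_{\Jc_{D,L}}$ is closed and $\tilde q$ is continuous with $\tilde q(0)=I$, letting $t\to 0$ through good times yields $R\in G_{D,L}$. Only then does the identity split, giving $\psi^*\equiv\psi$ and $\tilde q(t)\in G_{\Jc_{D,L}}$ for every $t$ (by the same closedness-and-continuity argument at the finitely many bad times), after which your Lie-subgroup and power-series argument finishes the necessity direction as you describe. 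With that insertion your proof is complete and is a genuine, somewhat more streamlined alternative to the paper's proof.
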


Note that the conjugation $\tilde{A} = U'AU$ expresses the infinitesimal rotation parameter $A$ in the coordinate system determined by $U$.
If $A$ in Theorem~\ref{thm:1alternative} is such that $\tilde{A} \in \gf_{D,L}$, then the conditions (i) and (ii) are equivalent to $\tilde{B} \in \gf_{D,L}$. If $p = 2$ or $3$ and $\tilde{A} \not\in \gf_{D,L}$, then the condition is $\tilde{B} = \tilde{A}$.

It is worth emphasizing a special case where there are only finitely many geodesics corresponding to a scaling--rotation curve $\chi(t)$.

\begin{corollary}\label{cor:them3.8}
Suppose, for some $t$, $\chi(t) = \chi(t; U,D,A,L)$ is an SPD matrix with distinct eigenvalues. Then $\chi$ corresponds to only finitely many $(p!2^{p-1})$ geodesics
$\gamma(t) =  \gamma(t ; U  I_\sigma P_\pi',D_\pi, A, L_\pi)$, where $\pi \in S_p$ and $\sigma \in \boldsig_p^+$.
\end{corollary}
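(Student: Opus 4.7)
The plan is to specialize Theorem~\ref{thm:1alternative} to the hypothesis that $\chi(t_0)$ has distinct eigenvalues for some $t_0$. That theorem asserts that every geodesic $\gamma$ with $c\circ\gamma=\chi$ has the form $\gamma(t;URP_\pi',D_\pi,B,L_\pi)$ for some $\pi\in S_p$, $R\in G_{D,L}$, and $B\in\asym(p)$ satisfying $\tilde B-\tilde A\in\gf_{D,L}$ and $(\ad_{\tilde B})^j(\tilde A)\in\gf_{D,L}$ for all $j\ge 1$. My aim is to show that the eigenvalue hypothesis collapses $G_{D,L}$ to the set of even sign-change matrices and forces $\gf_{D,L}=\{0\}$, leaving exactly $p!\cdot 2^{p-1}$ admissible parameter choices, each yielding a distinct geodesic.

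The key observation is that the eigenvalues of $\chi(t_0)=\exp(At_0)UD\exp(Lt_0)U'\exp(A't_0)$ are precisely the diagonal entries $d_ie^{l_it_0}$ of $D\exp(Lt_0)$, so their being distinct forces the pairs $(d_i,l_i)$ to be pairwise distinct; equivalently, the partition $\Jc_{D,L}$ is the discrete partition. By the definition of $\gf_\Jc$ in Section~\ref{sec:minimalscrotcurve} this gives $\gf_{D,L}=\{0\}$, and by~(\ref{eq:Liesubgroup}), $G_{D,L}$ consists of those $R\in\SO(p)$ preserving every coordinate axis, namely the even sign-change matrices $I_\sigma$ with $\sigma\in\boldsig_p^+$. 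Condition (i) of Theorem~\ref{thm:1alternative} then reads $\tilde B=\tilde A$, i.e.\ $B=A$, and condition (ii) is automatic since $[\tilde A,\tilde A]=0$. Thus the admissible geodesics are exactly $\gamma(t;UI_\sigma P_\pi',D_\pi,A,L_\pi)$ with $(\pi,\sigma)\in S_p\times\boldsig_p^+$, producing $p!\cdot 2^{p-1}$ candidates.

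To finish I would verify these candidates are pairwise distinct as geodesics. Since a geodesic is determined by its initial point and initial velocity, a coincidence of the geodesics indexed by $(\pi_1,\sigma_1)$ and $(\pi_2,\sigma_2)$ would require equality in both the $\SO(p)$- and $\diagp(p)$-factors of both point and velocity; the $\diagp(p)$-coordinates supply $D_{\pi_1}=D_{\pi_2}$ and $L_{\pi_1}=L_{\pi_2}$, and the joint distinctness of the pairs $(d_i,l_i)$ then forces $\pi_1=\pi_2$, after which $I_{\sigma_1}=I_{\sigma_2}$ follows from equality of $UI_{\sigma_1}P_{\pi_1}'$ and $UI_{\sigma_2}P_{\pi_2}'$. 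I expect this distinctness step to be the most delicate point, because $D$ alone (or $L$ alone) may well have repeated entries, so neither $D_\pi$ nor $L_\pi$ by itself pins down $\pi$. The resolution is exactly that the argument runs relative to the joint partition $\Jc_{D,L}$ rather than to $\Jc_D$ or $\Jc_L$ separately; this is what both trivializes $\gf_{D,L}$ and makes $\pi$ recoverable from the pair $(D_\pi,L_\pi)$.
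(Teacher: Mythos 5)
Your proposal is correct and follows essentially the same route as the paper: distinct eigenvalues of $\chi(t_0)$, being the diagonal entries of $D\exp(Lt_0)$, force $\Jc_{D,L}$ to have only singleton blocks, so $\gf_{D,L}=\{0\}$ collapses conditions (i)--(ii) of Theorem~\ref{thm:1alternative} to $B=A$, while $G_{D,L}$ reduces to the even sign-change matrices (the paper cites Theorem~\ref{thm:versions} for this last step, you the definition in (\ref{eq:Liesubgroup}) --- the same fact). Your closing verification that the $p!\,2^{p-1}$ candidates are pairwise distinct, using the joint distinctness of the pairs $(d_i,l_i)$ to recover $\pi$ and then $\sigma$, is a correct extra step that the paper's proof leaves implicit.
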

\subsection{Scaling--rotation distance between SPD matrices}

In $(\SO \times \diagp)(p)$, consider the set of all elements whose eigen-composition is $X$:
$$\Ec_X = \{(U,D)\in (\SO \times \diagp)(p) : X = UDU'\}.$$
Since the eigen-composition is a surjective mapping, the collection of these sets $\Ec_X$ partitions the manifold $(\SO \times \diagp)(p)$. The set $\Ec_X = c^{-1}(X)$ is called the fiber over $X$. 
Theorem~\ref{thm:versions} above characterizes all members of $\Ec_X$ for any $X$.

%
%

It is natural to define a distance between  $X$ and $Y$ $\in \symp(p)$ to be the length of the shortest geodesic connecting $\Ec_X$ and $\Ec_Y \subset (\SO \times \diagp)(p)$.
\begin{definition}
For $X,Y \in \symp(p)$, the scaling--rotation distance is defined as
\begin{equation}\label{eq:quotientdistance}
d_{\Sc\Rc} (X,Y)
   := \inf_{ \substack{
     (U,D) \in \Ec_X, \\
    (V,\Lambda) \in \Ec_Y }
   } d( (U,D), (V,\Lambda) ),
\end{equation}
where $d(\cdot,\cdot)$ is the geodesic distance function (\ref{eq:geoddist}).
\end{definition}
%

The geodesic distance $d( (U,D), (V,\Lambda) )$ measures the length of the shortest geodesic segment connecting $(U,D)$ and  $(V,\Lambda)$.
Any geodesic, mapped to $\symp(p)$ by the eigen-composition, is a  scaling--rotation curve connecting $X = UDU'$ and $Y = V\Lambda V'$. In this sense, the scaling--rotation distance $d_{\Sc\Rc}$ measures the minimum amount of smooth deformation from $X$ to $Y$ (or vice versa) only by the rotation of eigenvectors and individual scaling of eigenvalues.

Note that $d_{\Sc\Rc}$ on $\symp(p)$  is well-defined and the infimum is actually achieved, as both $\Ec_X$ and $\Ec_Y$ are non-empty and compact.
It has desirable invariance properties, and is a semi-metric on $\symp(p)$.
\begin{theorem}\label{thm:properties}
For any $X,Y \in \symp(p)$, the scaling--rotation distance $d_{\Sc\Rc}$ is
\begin{romannum}
\item invariant under  matrix inversion, i.e.,  $d_{\Sc\Rc}(X,Y) = d_{\Sc\Rc}(X^{-1},Y^{-1})$,
\item invariant under simultaneous uniform scaling and conjugation by a rotation matrix, i.e., $d_{\Sc\Rc}(X,Y)  = d_{\Sc\Rc}(s RXR' ,s RYR' )$ for any $s > 0$, $R \in \SO(p)$,
\item a semi-metric on $\symp(p)$. That is,
$d_{\Sc\Rc}(X,Y) \ge 0$,
 $d_{\Sc\Rc}(X,Y) = 0$ if and only if $X =Y$, and
$d_{\Sc\Rc}(X,Y) = d_{\Sc\Rc}(Y,X)$.
%
\end{romannum}
\end{theorem}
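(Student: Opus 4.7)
The plan is to handle the three properties separately, using Theorem~\ref{thm:versions} to identify the fibers $\Ec_{X^{-1}}$ and $\Ec_{sRXR'}$ in terms of $\Ec_X$, and then exploiting the invariances of the geodesic distance $d$ on $(\SO \times \diagp)(p)$ given by Proposition~\ref{prop:invariance_geoddist}. The unifying observation is that if $X = UDU'$ then $X^{-1} = UD^{-1}U'$ and $sRXR' = (RU)(sD)(RU)'$, so inversion and scaling/conjugation act on fibers in a very transparent way.

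For (i), I would first observe that $G_{D^{-1}} = G_D$ and $(D^{-1})_\pi = (D_\pi)^{-1}$, so Theorem~\ref{thm:versions} gives $\Ec_{X^{-1}} = \{(U, D^{-1}) : (U,D) \in \Ec_X\}$. Using (\ref{eq:geoddist}) with $\log(\Lambda^{-1}(D^{-1})^{-1}) = -\log(D^{-1}\Lambda)$, whose Frobenius norm is invariant under negation, I get $d((U,D^{-1}),(V,\Lambda^{-1})) = d((U,D),(V,\Lambda))$. Taking the infimum over $\Ec_X \times \Ec_Y$ yields $d_{\Sc\Rc}(X^{-1},Y^{-1}) = d_{\Sc\Rc}(X,Y)$. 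For (ii), the same reasoning gives $\Ec_{sRXR'} = \{(RU, sD) : (U,D) \in \Ec_X\}$, and a single invocation of Proposition~\ref{prop:invariance_geoddist} with $R_1 = R$, $R_2 = I$, $S = sI$, $\pi = \mathrm{id}$ yields $d((RU,sD),(RV,s\Lambda)) = d((U,D),(V,\Lambda))$, so the infimum is preserved.

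For (iii), non-negativity and symmetry of $d_{\Sc\Rc}$ are inherited directly from $d$, and $X = Y$ trivially gives $d_{\Sc\Rc}(X,Y) = 0$ (pick any common version). The converse implication is the delicate point: I would use that the infimum in (\ref{eq:quotientdistance}) is actually attained because $\Ec_X$ and $\Ec_Y$ are compact subsets of $(\SO \times \diagp)(p)$ and $d$ is continuous. If $d_{\Sc\Rc}(X,Y) = 0$ then there exist $(U,D) \in \Ec_X$ and $(V,\Lambda) \in \Ec_Y$ with $d((U,D),(V,\Lambda)) = 0$; since $d$ is a genuine metric on $(\SO \times \diagp)(p)$, this forces $(U,D) = (V,\Lambda)$, hence $X = UDU' = V\Lambda V' = Y$.

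The main obstacle is the compactness claim needed in (iii). When $X$ has repeated eigenvalues, the fiber $\Ec_X$ contains a continuous family of rotations indexed by the stabilizer $G_D$, so it is not a finite set. The argument I would give is that by Theorem~\ref{thm:versions} the fiber is the image of the compact set $G_D \times S_p$ (viewed as a finite union of copies of $G_D$) under the continuous map $(R,\pi) \mapsto (URP_\pi', D_\pi)$; since $G_D$ is a closed subgroup of the compact group $\SO(p)$, this image is compact. Continuity of $d$ on the compact product $\Ec_X \times \Ec_Y$ then secures attainment of the infimum, and the metric property of $d$ closes the argument.
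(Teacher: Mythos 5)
Your proposal is correct, and it is essentially the argument the paper has in mind: the paper omits the proof as "easily obtained," and the key ingredients you use — the explicit fiber bijections $(U,D)\mapsto(U,D^{-1})$ and $(U,D)\mapsto(RU,sD)$, the invariances of $d$ from Proposition~\ref{prop:invariance_geoddist} (and the sign-invariance of $\norm{\log(\Lambda D^{-1})}_F$), and compactness of $\Ec_X$, $\Ec_Y$ so the infimum in (\ref{eq:quotientdistance}) is attained and the metric property of $d$ forces $X=Y$ — are exactly the facts the paper itself records (Theorem~\ref{thm:versions}, Proposition~\ref{prop:invariance_geoddist}, and the remark after the definition of $d_{\Sc\Rc}$ that the fibers are nonempty and compact). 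No gaps.
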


Although $d_{\Sc\Rc}$ is not a metric on the entire set $\symp(p)$, it is a metric on an important subset of $\symp(p)$. 

\begin{theorem}\label{thm:properties2}
$d_{\Sc\Rc}$ is a metric on the set of SPD matrices whose eigenvalues are all distinct.
\end{theorem}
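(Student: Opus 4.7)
\textbf{Proof plan for Theorem~\ref{thm:properties2}.} By Theorem~\ref{thm:properties}, $d_{\Sc\Rc}$ is already non-negative, symmetric, and separates points, so the only thing missing is the triangle inequality: for $X,Y,Z \in \symp(p)$ with all eigenvalues distinct,
$$d_{\Sc\Rc}(X,Z) \le d_{\Sc\Rc}(X,Y) + d_{\Sc\Rc}(Y,Z).$$
The plan is to lift the problem to the Riemannian manifold $(\SO \times \diagp)(p)$, where $d$ is a genuine metric satisfying the triangle inequality, and to exploit the finite-fiber structure (Theorem~\ref{thm:versions} and the subsequent remark) together with the isometry invariance of $d$ (Proposition~\ref{prop:invariance_geoddist}).

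First, I would note that because $X,Y,Z$ have distinct eigenvalues, the fibers $\Ec_X, \Ec_Y, \Ec_Z$ are each finite of size $p!\,2^{p-1}$, so the infimum defining $d_{\Sc\Rc}$ is attained. Pick minimizers: $(U,D)\in \Ec_X$ and $(V,\Lambda) \in \Ec_Y$ with $d((U,D),(V,\Lambda)) = d_{\Sc\Rc}(X,Y)$, and $(V',\Lambda') \in \Ec_Y$ and $(W,\Gamma) \in \Ec_Z$ with $d((V',\Lambda'),(W,\Gamma)) = d_{\Sc\Rc}(Y,Z)$. Since $Y$ has distinct eigenvalues, Theorem~\ref{thm:versions} gives $\sigma \in \boldsig_p^+$ and $\pi \in S_p$ with $(V',\Lambda') = (V I_\sigma P_\pi', \Lambda_\pi)$.

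Next, I would introduce the map $\Phi: (\SO \times \diagp)(p) \to (\SO \times \diagp)(p)$ defined by $\Phi(W_0,\Gamma_0) = (W_0 I_\sigma P_\pi', (\Gamma_0)_\pi)$ and verify two properties. (a) $\Phi$ is a global $d$-isometry: this is an immediate instance of Proposition~\ref{prop:invariance_geoddist} with $R_1 = I$, $R_2 = I_\sigma P_\pi'$, and $S = I$. (b) $\Phi$ preserves every fiber, i.e.\ $c \circ \Phi = c$. A short calculation shows $c(\Phi(W_0,\Gamma_0)) = W_0 I_\sigma (P_\pi' P_\pi) \Gamma_0 (P_\pi' P_\pi) I_\sigma W_0' = W_0 I_\sigma^2 \Gamma_0 W_0' = W_0 \Gamma_0 W_0'$, using that $I_\sigma$ is diagonal (hence commutes with $\Gamma_0$ and squares to $I$) and $P_\pi$ is orthogonal. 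In particular $\Phi^{-1}(W,\Gamma) \in \Ec_Z$, and by construction $\Phi(V,\Lambda) = (V',\Lambda')$.

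Finally, the triangle inequality on the Riemannian manifold $(\SO \times \diagp)(p)$ and the $\Phi$-isometry property chain together as
\begin{align*}
d_{\Sc\Rc}(X,Z) &\le d\bigl((U,D),\Phi^{-1}(W,\Gamma)\bigr) \\
&\le d\bigl((U,D),(V,\Lambda)\bigr) + d\bigl((V,\Lambda),\Phi^{-1}(W,\Gamma)\bigr) \\
&= d\bigl((U,D),(V,\Lambda)\bigr) + d\bigl(\Phi(V,\Lambda),(W,\Gamma)\bigr) \\
&= d_{\Sc\Rc}(X,Y) + d_{\Sc\Rc}(Y,Z),
\end{align*}
which completes the proof. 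The one subtle ingredient — and the reason the statement is restricted to SPD matrices with distinct eigenvalues — is that the finite-fiber hypothesis on $Y$ forces the transition element between $(V,\Lambda)$ and $(V',\Lambda')$ to be of the form $I_\sigma P_\pi'$ with $I_\sigma$ \emph{diagonal}; this diagonality is exactly what makes the global isometry $\Phi$ also preserve every fiber $\Ec_{Z_0}$ over $\symp(p)$. For matrices with repeated eigenvalues the stabilizer contains genuine rotations, the natural candidate for $\Phi$ no longer preserves fibers of SPD matrices with different multiplicity patterns, and the triangle inequality can fail.
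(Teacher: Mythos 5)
Your proof is correct and takes essentially the same route as the paper's: both reduce the triangle inequality for $d_{\Sc\Rc}$ to the triangle inequality for the metric $d$ on $(\SO \times \diagp)(p)$, combining Proposition~\ref{prop:invariance_geoddist} with the fact (Theorem~\ref{thm:versions}) that, since the middle matrix has distinct eigenvalues, its versions differ only by right multiplication by $I_\sigma P_\pi'$, which acts by isometries preserving every fiber. The only difference is organizational: the paper anchors at one fixed version of the middle matrix and takes the nearest versions of the two outer matrices to it (leaving the ``fix a version without loss of generality'' step implicit), whereas you align the two minimizing pairs through the explicit fiber-preserving isometry $\Phi$, which makes that same step fully explicit.
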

\subsection{Minimal scaling--rotation curves in $\symp(p)$}
To evaluate the scaling--rotation distance (\ref{eq:quotientdistance}), it is necessary to find a shortest-length geodesic in $(\SO \times \diagp)(p)$ between the fibers $\Ec_X$ and $ \Ec_Y$. There are multiple geodesics connecting two fibers, because each fiber contains at least $p!2^{p-1}$ elements (Theorem~\ref{thm:versions}), as depicted in Fig.~\ref{fig:horizontalgeodesics}.
We think of fibers $\Ec_X$ arranged vertically in  $(\SO \times \diagp)(p)$ with the mapping $c$ (eigen-composition) as downward projection.
It is clear that there exists a geodesic that joins the two fibers with the minimal distance.
We call such a geodesic a \emph{minimal geodesic} for the two fibers $\Ec_X$ and $\Ec_Y$.
A necessary, but generally not sufficient, condition for a geodesic to be minimal for $\Ec_X$ and $\Ec_Y$ is that it is perpendicular to $\Ec_X$ and $\Ec_Y$ at its endpoints.
A pair $((U,D), (V,\Lambda)) \in \Ec_X \times \Ec_Y$ is called a \emph{minimal pair} if $(U,D)$ are $(V,\Lambda)$ are connected by a minimal geodesic.
The  distance $d_{\Sc\Rc} (X,Y)$ is the length of any minimal geodesic segment connecting the fibers $\Ec_X$ and $ \Ec_Y$.

\begin{figure}[tb]
\centering
 \includegraphics[width=0.8\textwidth]{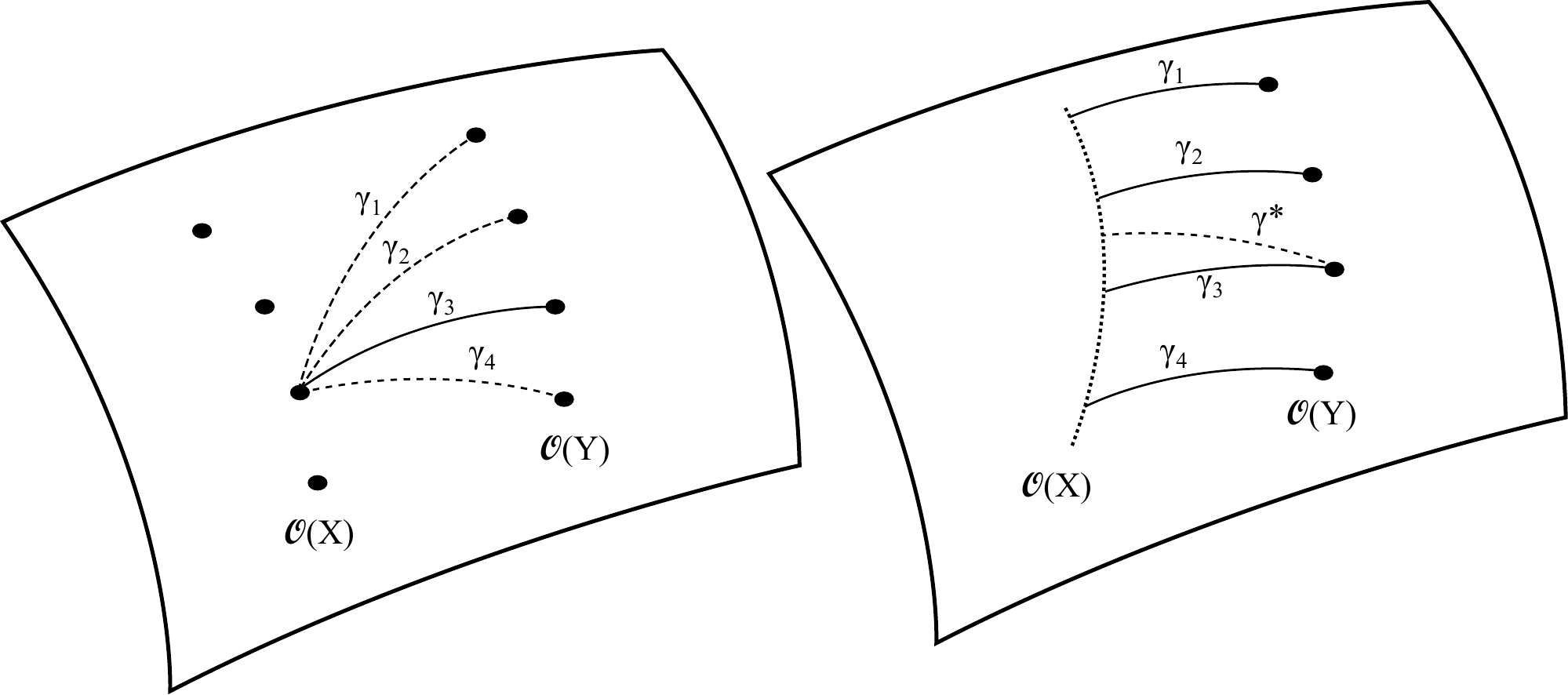}
 \caption{(left) $(\SO \times \diagp)(2)$ is drawn  as a curved manifold. In this picture, the four versions of $X$ (and of $Y$) are displayed vertically. For a fixed version $(U_3,D_3)$ of $X$, there are four geodesics $\gamma_i$ joining $(U_3,D_3)$ and the $i$th version of $Y$. A minimal geodesic ($\gamma_3$ in this figure) has the shortest length among these geodesics.
 (right) The fiber $\Ec_X$ has infinitely many versions, shown as a vertical dotted curve in $(\SO \times \diagp)(2)$. There exist multiple minimal  geodesics $\gamma_i$ with the shortest length, all of which meet the  vertical fiber $\Ec_X$ in the right angle. Here, $\gamma*$ is an example of a non-minimal geodesic, which does not meet $\Ec_X$ orthogonally. \label{fig:horizontalgeodesics}}
\end{figure}

\begin{definition}\label{def:minimalSCROTcurve}
 Let $X, Y \in \symp(p)$.
A scaling--rotation curve $\chi: [0,1] \to \symp(p)$, as defined in (\ref{eq:sc-rot-curve}), with $\chi(0) = X$ and $\chi(1) = Y,$  is called \emph{minimal} if $\chi = c\circ\gamma$ for some minimal geodesic segment $\gamma$ connecting $\Ec_X$ and $\Ec_Y$.
\end{definition}

\begin{theorem}\label{thm:main_result}
Let $X, Y \in \symp(p)$.
Let $((U,D),(V,\Lambda))$ be a minimal pair for $X$ and $Y$, and
let $A = \log(VU'), L = \log(D^{-1} \Lambda)$.
Then the scaling-rotation curve $\chi(t; U,D,A,L)$, $0 \le t \le 1$, is minimal.
\end{theorem}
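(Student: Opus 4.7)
The plan is to construct an explicit geodesic $\gamma$ in $(\SO \times \diagp)(p)$ from the minimal pair and then verify both that its eigen-composition is $\chi(\cdot; U, D, A, L)$ and that it realizes the infimum defining $d_{\Sc\Rc}(X,Y)$. Specifically, I would take
\[
\gamma(t) := \Exp_{(U,D)}\bigl((AUt, LDt)\bigr) = (\exp(At)U,\ \exp(Lt)D), \qquad t \in [0,1],
\]
the geodesic in $(\SO \times \diagp)(p)$ starting at $(U,D)$ with initial velocity $(AU, LD)$. The endpoint check is then immediate: $\exp(A)U = VU'\cdot U = V$ and, because $\exp(L)$ and $D$ are both diagonal and hence commute, $\exp(L)D = D^{-1}\Lambda \cdot D = \Lambda$. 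Thus $\gamma(0) = (U,D) \in \Ec_X$ and $\gamma(1) = (V,\Lambda) \in \Ec_Y$.

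Next I would verify the identification $c \circ \gamma = \chi(\,\cdot\,; U, D, A, L)$. Writing out
\[
c(\gamma(t)) = \exp(At)U \cdot \exp(Lt)D \cdot U'\exp(A't),
\]
and using the commutativity of the diagonal matrices $D$ and $\exp(Lt)$, this reorganizes to $\exp(At)\, U D \exp(Lt)\, U'\exp(A't)$, which is exactly the scaling-rotation curve (\ref{eq:sc-rot-curve}). Hence $\chi$ is the image under $c$ of a geodesic segment joining $\Ec_X$ to $\Ec_Y$.

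To conclude minimality, I would compute the length of $\gamma$ via the Riemannian inner product at $(U,D)$: the length on $[0,1]$ equals $\|(AU,LD)\|_{(U,D)} = \sqrt{(k/2)\|A\|_F^2 + \|L\|_F^2}$. Inserting $A = \log(VU')$ and $L = \log(D^{-1}\Lambda)$ and comparing with (\ref{eq:geoddist}), this length equals $d((U,D),(V,\Lambda))$. The hypothesis that $((U,D),(V,\Lambda))$ is a minimal pair means precisely that this geodesic distance realizes the infimum (\ref{eq:quotientdistance}), so the length of $\gamma$ is $d_{\Sc\Rc}(X,Y)$, making $\gamma$ a minimal geodesic connecting $\Ec_X$ and $\Ec_Y$. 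By Definition~\ref{def:minimalSCROTcurve}, $\chi = c \circ \gamma$ is then a minimal scaling-rotation curve.

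The one subtle point, and essentially the only real obstacle, is the non-uniqueness of $\log(VU')$ when $VU'$ is an involution in $\SO(p)$. Here I would invoke the convention adopted after Proposition~\ref{prop:invariance_geoddist}: $A = \log(VU')$ denotes a solution of $\exp(A) = VU'$ of smallest Frobenius norm, so that $\tfrac{1}{2}\|A\|_F^2 = d_{\SO(p)}(U,V)^2$ even in the antipodal case. With this convention the length computation in the previous paragraph still gives exactly $d((U,D),(V,\Lambda))$, and any of the finitely many minimizing choices of $A$ produces a valid minimal geodesic; the remainder of the argument is unaffected.
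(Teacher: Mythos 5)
Your proposal is correct, and it is essentially the intended argument: the paper gives no separate proof of this theorem because it follows directly from the geodesic formula (\ref{eq:geodesicformula}), the Riemannian inner product (\ref{eq:RiemmanianMetric}) and distance (\ref{eq:geoddist}), and Definition~\ref{def:minimalSCROTcurve}, which is exactly the endpoint, eigen-composition, and length verification you carry out. Your handling of the antipodal case via the smallest-Frobenius-norm logarithm convention also matches the paper's convention, so nothing is missing.
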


The above theorem tells us that for any two points $X,Y \in \symp(p)$, a minimal scaling--rotation curve is determined by a minimal pair of $\Ec_X$ and $\Ec_Y$.
Procedures to evaluate the parameters of the minimal rotation--scaling curve and to compute the scaling--rotation distance are provided for the special cases $p = 2,3$ in Section~\ref{sec:computation}.


The minimal scaling--rotation curve  may not be unique.
%
The following theorem gives sufficient conditions for uniqueness. 

\begin{theorem}\label{thm:horizontal_geodesic_and_scarotcurve}
Let $((U,D),(V,\Lambda))$  be a minimal pair for $\Ec_X$ and $\Ec_Y$, and let
$\chi_o(t) = \chi(t; U,D,  \log(VU'), \log(D^{-1}\Lambda))$ be the corresponding minimal scaling--rotation curve.
\begin{enumerate}
\item[(i)] If either all eigenvalues of $D$ are distinct or $\Lambda$ has only one distinct eigenvalue, and if $(V,\Lambda)$ is the unique minimizer of $d((U,D),(V_0,\Lambda_0))$ among all $(V_0,\Lambda_0) \in \Ec_Y$, then all minimal geodesics between $\Ec_X$ and $\Ec_Y$   are mapped by $c$ to the unique $\chi_o(t)$ in $\symp(p)$.
\item[(ii)] If  there exists $(V_1,\Lambda_1) \in \Ec_Y$ such that $(V_1,\Lambda_1) \neq(V,\Lambda)$ and the pair $((U,D),(V_1,\Lambda_1))$ is also minimal, then $\chi_1(t) = \chi(t;   U,D,  \log(V_1U'), \log(D^{-1}\Lambda_1))$ is also minimal and $\chi_1(t) \neq \chi_o(t)$ for some $t$.
\end{enumerate}
\end{theorem}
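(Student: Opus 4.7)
The plan is to handle parts (i) and (ii) separately but under a common viewpoint: every minimal scaling--rotation curve has the form $c\circ\gamma$ for some minimal geodesic $\gamma$ joining the fibers $\Ec_X$ and $\Ec_Y$, so the issue reduces to deciding when two such geodesics project to the same curve in $\symp(p)$. Throughout I write $A_o=\log(VU')$ and $L_o=\log(D^{-1}\Lambda)$, so that $\chi_o(t)=\chi(t;U,D,A_o,L_o)$.

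For part (i), I would take an arbitrary minimal geodesic $\gamma$ between $\Ec_X$ and $\Ec_Y$, set $\chi:=c\circ\gamma$, and prove $\chi=\chi_o$ by splitting on the hypothesis. If $D$ has all distinct eigenvalues, then $X=UDU'$ does too, so Corollary~\ref{cor:them3.8} applies and produces a finite list of geodesics projecting to $\chi$, all of the same length because $L\mapsto L_\pi$ preserves the Frobenius norm; one of them, say $\gamma_1$, has $(U,D)$ itself as its initial point and therefore realizes the distance $d_{\Sc\Rc}(X,Y)$ from $(U,D)$ to $\Ec_Y$. The uniqueness hypothesis then forces the endpoint of $\gamma_1$ to be $(V,\Lambda)$, whence its parameters must be $A_o,L_o$ and $\chi=c\circ\gamma_1=\chi_o$. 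In the alternative case $\Lambda=\lambda I$, so $Y=\lambda I$, and for any version $(U_0,D_0)\in\Ec_X$ the squared distance to $(V_0,\lambda I)\in\Ec_Y$ splits as $\frac{k}{2}\|\log(V_0U_0')\|_F^2+\|\log(\lambda D_0^{-1})\|_F^2$; the second term is constant on $\Ec_X$ and the first is uniquely minimized at $V_0=U_0$. The resulting minimal geodesic has $A=0$ and $L=\log(\lambda D_0^{-1})$, and because $D_0$ commutes with $\exp(Lt)$ the associated scaling--rotation curve simplifies to $\lambda^tU_0D_0^{1-t}U_0'=\lambda^tX^{1-t}$, independent of the chosen version; hence every minimal scaling--rotation curve coincides with $\chi_o(t)=\lambda^tX^{1-t}$.

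For part (ii), minimality of $\chi_1$ is immediate from Theorem~\ref{thm:main_result} applied to the minimal pair $((U,D),(V_1,\Lambda_1))$. To prove $\chi_1(t)\neq\chi_o(t)$ for some $t$ I would argue by contradiction: assume $\chi_1\equiv\chi_o$. The geodesic $\gamma_1=\gamma(t;U,D,A_1,L_1)$ with $A_1=\log(V_1U')$ and $L_1=\log(D^{-1}\Lambda_1)$ then projects under $c$ to $\chi_o$, so by Theorem~\ref{thm:1alternative} it admits an alternative parameterization $\gamma(t;URP_\pi',D_\pi,B,(L_o)_\pi)$ for some $\pi\in S_p$, $R\in G_{D,L_o}$, and $B\in\asym(p)$ satisfying conditions (i) and (ii) there. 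Matching initial points forces $D_\pi=D$ and $R=P_\pi$, hence $P_\pi\in G_D\cap G_{L_o}=G_{D,L_o}$ and $(L_o)_\pi=L_o$; comparing the two parameterizations of $\gamma_1$ then yields $L_1=L_o$ and $B=A_1$. In particular $\Lambda_1=\Lambda$, and since $(V_1,\Lambda_1)\neq(V,\Lambda)$ by hypothesis we get $V_1\neq V$ and $A_1\neq A_o$.

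The main obstacle is to turn this into a contradiction. Writing $\tilde A_1=\tilde A_o+\tilde C$ with $\tilde C\in\gf_{D,L_o}$ as in condition (i) of Theorem~\ref{thm:1alternative}, the equal-length requirement for the two minimal geodesics gives $\|A_1\|_F=\|A_o\|_F$, equivalently $2\langle\tilde A_o,\tilde C\rangle+\|\tilde C\|_F^2=0$. I would decompose $\tilde A_o=\tilde A_o'+\tilde A_o''$ orthogonally with respect to $\gf_{D,L_o}$ (so $\tilde A_o'\in\gf_{D,L_o}$), rewrite the identity as $\|\tilde C+\tilde A_o'\|_F=\|\tilde A_o'\|_F$, and then use condition (ii)---which by Lie closure of $\gf_{D,L_o}$ reduces to $(\ad_{\tilde C})^{j-1}[\tilde C,\tilde A_o'']\in\gf_{D,L_o}$ for each $j\ge 1$---together with the fact that $A_1$ is a minimum-norm logarithm of $V_1U'$ to force $\tilde C=0$. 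This would give $V_1=V$, contradicting $(V_1,\Lambda_1)\neq(V,\Lambda)$.
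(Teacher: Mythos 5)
Your part (i) is correct and is a legitimately different route from the paper's: the paper shows that any other minimal pair is equivalent to $((U,D),(V,\Lambda))$ by combining the invariance of $d$ (Proposition~\ref{prop:invariance_geoddist}) with the containment $G_D\subset G_{\Lambda_\pi}$ forced by the eigenvalue hypotheses, and then invokes Theorem~\ref{thm:1alternative}; you instead transport an arbitrary minimal geodesic to one starting at $(U,D)$ via Corollary~\ref{cor:them3.8} in the distinct-eigenvalue case (using that permuting $L$ preserves length) and compute the isotropic-$Y$ case explicitly as $\lambda^t X^{1-t}$. Both arguments share the same implicit convention about $\log(VU')$ when $VU'$ is an involution, so I do not count that against you.

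Part (ii) has a genuine gap. You correctly reduce, via Theorem~\ref{thm:1alternative} and matching initial data, to $L_1=L_o$, $\Lambda_1=\Lambda$, $A_1\neq A_o$, and $\tilde{C}:=\tilde{A}_1-\tilde{A}_o\in\gf_{D,L_o}$, but the step that is supposed to produce the contradiction --- forcing $\tilde{C}=0$ from the equal-length identity $\bigl\|\tilde{C}+\tilde{A}_o'\bigr\|_F=\bigl\|\tilde{A}_o'\bigr\|_F$ together with the bracket conditions and the minimal-norm-logarithm property --- is only announced, not proved, and it is not clear it can be carried out: the norm identity by itself is satisfied by the nonzero choice $\tilde{C}=-2\tilde{A}_o'$ whenever $\tilde{A}_o$ has a nonzero component $\tilde{A}_o'$ in $\gf_{D,L_o}$, so these ingredients alone do not pin down $\tilde{C}$. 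What is missing is the first-order consequence of minimality that the paper uses: a minimal geodesic between the fibers must meet $\Ec_X$ orthogonally at $(U,D)$, and since $T_{(U,D)}\Ec_X=\{(UC,0):C\in\gf_D\}$, the initial rotational velocities of both minimal geodesics satisfy $\tilde{A}_o,\tilde{A}_1\in(\gf_D)^{\perp}$, hence $\tilde{C}\in(\gf_D)^{\perp}$. Combined with $\tilde{C}\in\gf_{D,L_o}\subset\gf_D$ from Theorem~\ref{thm:1alternative}, this gives $\tilde{C}=0$ immediately, so $V_1=V$ and then $L_1=L_o$ yields $\Lambda_1=\Lambda$, contradicting $(V_1,\Lambda_1)\neq(V,\Lambda)$. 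Without this orthogonality-to-the-fiber argument (or a completed substitute), your proof of the second assertion in (ii) is incomplete.
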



The following example shows a case with a unique minimal scaling--rotation curve, and two  cases exhibiting non-uniqueness.


\emph{Example}.
{\rm
Consider $X = \mbox{diag}(e,e^{-1})$ and $Y = R_\theta (2X) R_\theta'$, where $R_\theta$ is the $2 \times 2$ rotation matrix
by counterclockwise angle $\theta$. 

\emph{(i)} If $\theta = \pi/3$, then there exists a unique minimal scaling--rotation curve between $X,Y$. This ideal case is depicted in Fig.~\ref{fig:example_paper}, where among the four scaling--rotation curves,  the red curve $\chi_4$  is minimal as indicated by the length of the curves. In the upper right panel, a version $(I,X)$ of $X$, depicted as a diamond, and a version of $Y$ are  joined by the red minimal geodesic segment.

\begin{figure}[tb!]
 \centering
  \includegraphics[width=1\textwidth]{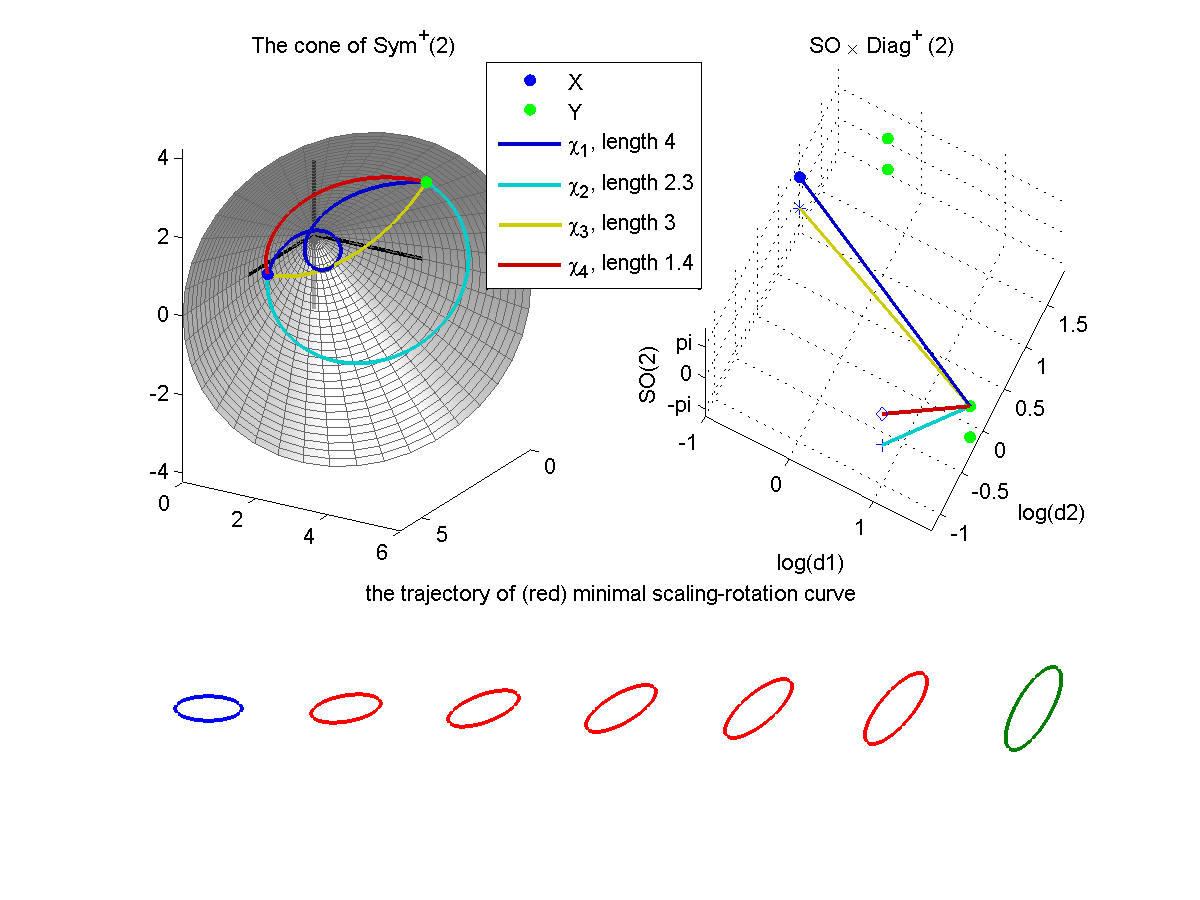}
  \vspace{-0.8in}
  \caption{ Two SPD matrices $X$ (blue) and $Y$ (green) in the cone of $\symp(2)$ (top left), and their four versions in a flattened $(\SO \times \diagp)(2)$ (top right). The eigen-composition of each shortest geodesic connecting versions of $X$ and $Y$ is a scaling--rotation curve in  $\symp(2)$. Different colors represent four different such curves. The red scaling--rotation curve has the shortest geodesic distance in $(\SO \times \diagp)(2)$, and thus is   minimal. Its trajectory is shown as the deformation of ellipses in the bottom panel (from leftmost $X$ to rightmost $Y$). \label{fig:example_paper}}
\end{figure}
\emph{(ii)} Suppose $\theta = \pi/2$. There are two minimal scaling--rotation curves, one by uniform scaling and counterclockwise rotation, the other by the same uniform scaling but by clockwise rotation. 

\emph{(iii)} Let $X = \mbox{diag}(e^{\epsilon/2},e^{-\epsilon/2})$ and $Y = R_\theta X R_\theta'$.
For $0\le\theta \le \pi/2$,
\begin{align*}
    d_{\Sc\Rc}(X,Y)& = \min\left\{\theta, \sqrt{(\frac{\pi}{2}-\theta)^2 + 2\epsilon^2} \right\}
     =      \left\{
                    \begin{array}{ll}
                      \theta, &  {\theta \le \frac{\pi}{4} +  \frac{2\epsilon^2}{\pi}}, \\
                      \sqrt{(\frac{\pi}{2}-\theta)^2 + 2\epsilon^2}, & \hbox{otherwise.}
                    \end{array}
                  \right.
\end{align*}
If the rotation angle is less than 45 degrees or the SPD matrices are highly anisotropic (large $\epsilon$), then the minimal scaling--rotation is a pure rotation (leading to the distance $\theta$). On the other hand, if the matrices are close to be isotropic (eigenvalues $\approx$ 1), the minimal scaling--rotation curve is given by  simultaneous rotation and scaling. An exceptional case arises when $\theta = \frac{\pi}{4} +  \frac{2\epsilon^2}{\pi}<\frac{\pi}{2}$, where both curves are of the same length, and there are two minimal scaling--rotation curves.

}

\section{Computation of the minimal scaling--rotation curve and  scaling--rotation distance}\label{sec:computation}

We provide computation procedures for the scaling--rotation distance $d_{\Sc\Rc}(X,Y)$ for $X,Y \in \symp(2)$ or $\symp(3)$. Theorems~\ref{thm:minimaldistancep=2} and \ref{thm:minimaldistancep=3} below provide the minimal pair(s), based on which the exact formulation of the minimal scaling--rotation curve is evaluated in Theorem~\ref{thm:main_result} above.



\subsection{Scaling--rotation distance for $2 \times 2$ SPD matrices}
Let $(d_1,d_2)$ be the eigenvalues of $X$, $(\lambda_1,\lambda_2)$ the eigenvalues of $Y$.

\begin{theorem}\label{thm:minimaldistancep=2}
Given any $2 \times 2$ SPD matrices $X$ and $Y$, the distance (\ref{eq:quotientdistance}) is computed as follows.
\begin{romannum}
\item If $d_1 \neq d_2$ and $\lambda_1 \neq \lambda_2$, then there are exactly four versions of $X$, denoted by $(U_i, D_i), i=1,\ldots,4$, and for any version $(V,\Lambda)$ of $Y$,
\begin{equation} \label{eq:thm:minimaldistancep=2}
d_{\Sc\Rc}(X,Y) = \min_{i=1,\ldots,4} d((U_i, D_i), (V,\Lambda)  ).
\end{equation}
    These versions are given by the permutation and sign changes.
\item If $d_1 = d_2$, then for any version $(V,\Lambda)$ of $Y$,
     $d_{\Sc\Rc}(X,Y) = d((V, D), (V,\Lambda)  ),$
     regardless of whether the eigenvalues of $Y$ are distinct or not.
\end{romannum}
\end{theorem}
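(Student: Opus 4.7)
The plan is to derive both statements as direct consequences of the characterization of versions (Theorem~\ref{thm:versions}) and the simultaneous-transformation invariance of the geodesic distance (Proposition~\ref{prop:invariance_geoddist}).

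For part (i), Theorem~\ref{thm:versions} applied with $p=2$ and distinct eigenvalues says that $|\Ec_X| = |\Ec_Y| = p!\,2^{p-1} = 4$, with the four versions of $X$ obtained as $(U_i,D_i) = (U I_{\sigma_i} P_{\pi_i}',\,D_{\pi_i})$ for $(\pi_i,\sigma_i)\in S_2\times\boldsig_2^+$, and similarly for $Y$. Hence the infimum in (\ref{eq:quotientdistance}) is actually a minimum over $16$ pairs. I would reduce this to a minimum over $4$ pairs with $(V,\Lambda)$ fixed: given any other version $(V',\Lambda') = (V I_\sigma P_\pi',\Lambda_\pi)$ of $Y$, Proposition~\ref{prop:invariance_geoddist} (with $R_1=I,\ R_2=I_\sigma P_\pi',\ S=I$) gives
$$d\bigl((U_i,D_i),(V,\Lambda)\bigr)=d\bigl((U_i I_\sigma P_\pi',(D_i)_\pi),(V',\Lambda')\bigr),$$
and the map $(U_i,D_i)\mapsto (U_i I_\sigma P_\pi',(D_i)_\pi)$ is a bijection of the $4$-element set $\Ec_X$ to itself. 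Therefore the $4$-term multiset $\{d((U_i,D_i),(V,\Lambda))\}_{i=1}^4$ is independent of the version of $Y$ chosen, and its minimum coincides with $d_{\Sc\Rc}(X,Y)$, which is (\ref{eq:thm:minimaldistancep=2}).

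For part (ii), when $d_1=d_2=d$ we have $D=dI$ and $X=dI$, with stabilizer $G_D=\SO(2)$, so Theorem~\ref{thm:versions} gives $\Ec_X=\SO(2)\times\{dI\}$. For any fixed $(V,\Lambda)\in\Ec_Y$, formula (\ref{eq:geoddist}) becomes
$$d\bigl((U,dI),(V,\Lambda)\bigr)^2 = k\, d_{\SO(2)}(U,V)^2 + d_{\Dc}(dI,\Lambda)^2,$$
in which only the first summand depends on $U\in\SO(2)$; since $d_{\SO(2)}$ is a genuine metric, it is minimized at $U=V$ with value $0$, yielding $d((V,dI),(V,\Lambda))$. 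To see that this value is the same for every version $(V',\Lambda')\in\Ec_Y$, note that $\Lambda'$ has the same diagonal entries as $\Lambda$ up to a permutation (by Theorem~\ref{thm:versions}), and $d_{\Dc}(dI,\Lambda)^2$ depends only on the multiset of those entries because $dI$ is permutation-invariant. Hence $d_{\Sc\Rc}(X,Y)=d((V,dI),(V,\Lambda))$ for every version, regardless of whether $\lambda_1=\lambda_2$ or not.

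The only mildly subtle step is the bookkeeping in part (i): one must verify that right-multiplication by $I_\sigma P_\pi'$ combined with the diagonal permutation by $\pi$ really does permute the four versions of $X$, so that minimizing over the fixed $4$-element list $\{(U_i,D_i)\}$ with $(V,\Lambda)$ held fixed retrieves the full infimum. This follows from the explicit description of $\Ec_X$ in Theorem~\ref{thm:versions}, so the argument is essentially routine once that indexing is written out.
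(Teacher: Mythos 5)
Your proposal is correct and follows essentially the same route as the paper: part (i) uses Theorem~\ref{thm:versions} to enumerate the four versions and Proposition~\ref{prop:invariance_geoddist} (right-multiplication by $I_\sigma P_\pi'$ together with the permutation $\pi$) to show that fixing one version of $Y$ loses nothing, and part (ii) exploits that $X=d_1 I_2$ makes every $(U,d_1I_2)$, $U\in\SO(2)$, a version, so $U=V$ minimizes the rotational term. Your explicit check in (ii) that the resulting value is independent of the chosen version of $Y$ (since $\Lambda'=\Lambda_\pi$ and $d_{\Dc}(d_1I_2,\cdot)$ is permutation-invariant) is a slightly more direct way of handling what the paper dispatches by again invoking the invariance proposition, but it is the same idea.
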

Therefore, the minimizer of $(U_o, D_o)$ of (\ref{eq:thm:minimaldistancep=2}) and $(V,\Lambda)$ are a minimal pair for the case (\emph{i}); $(V, D), (V,\Lambda)$ are a minimal pair for the case (\emph{ii}). 

\subsection{Scaling-rotation distance for $3 \times 3$ SPD matrices}
Let $X,Y \in \symp(3)$.
Let $(d_1,d_2,d_3)$ be the eigenvalues of $X$, $(\lambda_1,\lambda_2,\lambda_3)$ the eigenvalues of $Y$, without any given ordering. 
In order to separately analyze and catalogue all cases of eigenvalue multiplicities in Theorem~\ref{thm:minimaldistancep=3} below, we will use the following details for the case where an eigenvalue of $X$ is of multiplicity 2.


For any version ($U,D$) with $D = \mbox{diag}(d_1,d_1,d_3)$, $d_1=d_2$, all other versions of $X$ are of the form $(UR_1P_\pi', D_\pi)$ for permutation $\pi$ and rotation matrix $R_1 \in G_D$ (Theorem~\ref{thm:versions}).
We can take $R_1 = RI_\sigma$ for some $\sigma \in \boldsig_p^+$ and for some diagonal rotation matrix $R$ with +1 on the lower right hand corner.
 For fixed $(U,D)$, $(V,\Lambda)$, $\sigma \in \boldsig_p^+$ and $\pi \in S_p$, one can find a \textit{minimal rotation} $\hat{R}_{\sigma,\pi}$ satisfying
$$d((U \hat{R}_{\sigma,\pi} I_\sigma P_\pi', D_\pi),(V,\Lambda)) \le d((UR I_\sigma P_\pi', D_\pi),(V,\Lambda)),$$
for all such $R$, as the following lemma states.

\begin{lemma}\label{lem:minimalrotation}
Let $\Gamma = I_\sigma P_\pi' V'U =
 \begin{bmatrix}
               \Gamma_{11} & \Gamma_{12} \\
               \Gamma_{21} & \gamma_{22} \\
             \end{bmatrix}$, where $\Gamma_{11}$ is the first $2 \times 2$ block of $\Gamma$.
 The minimal rotation matrix $\hat{R}_{\sigma,\pi} = \hat{R} $ is given by
$ \hat{R} = \begin{bmatrix}
               E_2E_1' & 0 \\
               0 & 1 \\
             \end{bmatrix},$
             where $E_1\Lambda_\Gamma E_2' $ is the ``semi-singular values'' decomposition of $\Gamma_{11}$. (In semi-singular values decomposition, we require $E_1, E_2 \in \SO(2)$ and that the diagonal entries $\lambda_1$ and $\lambda_2$ of $\Lambda_\Gamma$ satisfy $\lambda_1 \ge |\lambda_2| \ge 0 $.)
\end{lemma}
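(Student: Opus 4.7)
\textbf{Proof plan for Lemma~\ref{lem:minimalrotation}.}
Since the second coordinate $D_\pi$ of the version $(URI_\sigma P_\pi',D_\pi)$ does not depend on the block-diagonal rotation $R$, the scaling contribution $d_\Dc(D_\pi,\Lambda)$ to the geodesic distance (\ref{eq:geoddist}) is constant, so it suffices to minimize the rotational part $d_{{\rm SO}(3)}(URI_\sigma P_\pi',V)^{2}=\tfrac12\|\log(VP_\pi I_\sigma R'U')\|_F^{2}$. The plan is to strip away the ``external'' factors $U$ and $V$ using two standard invariances of the Frobenius norm of the matrix logarithm: (a) $\|\log(QMQ')\|_F=\|\log M\|_F$ for orthogonal $Q$, applied with $Q=U'$; and (b) $\|\log(M')\|_F=\|\log M\|_F$ for orthogonal $M$ (since $M'=M^{-1}$ and $\log M^{-1}=-\log M$). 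Together with the definition $\Gamma=I_\sigma P_\pi'V'U$, so that $\Gamma'=U'V P_\pi I_\sigma$, these two operations reduce the objective to $\tfrac12\|\log(R\Gamma)\|_F^{2}$, in which $U$, $V$, $P_\pi$ and $I_\sigma$ enter only through the fixed matrix $\Gamma$.

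Next, note that $R\Gamma\in\mathrm{SO}(3)$, so I can invoke the identity $\|\log Q\|_F^{2}=2\theta^{2}$, where $\theta\in[0,\pi]$ is the rotation angle of $Q$ (which follows from the fact that the skew-symmetric generator has eigenvalues $0,\pm i\theta$). Since $\tr(Q)=1+2\cos\theta$ on $[0,\pi]$, minimizing $\|\log(R\Gamma)\|_F^{2}$ is equivalent to maximizing $\tr(R\Gamma)$ over admissible $R$. With $R=\mathrm{diag}(R_1,1)$ and $\Gamma$ in the stated block form,
\[
R\Gamma=\begin{bmatrix}R_1\Gamma_{11} & R_1\Gamma_{12}\\ \Gamma_{21} & \gamma_{22}\end{bmatrix},
\qquad
\tr(R\Gamma)=\tr(R_1\Gamma_{11})+\gamma_{22},
\]
so the problem reduces to maximizing $\tr(R_1\Gamma_{11})$ over $R_1\in\mathrm{SO}(2)$.

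For this final step, I would invoke the semi-singular value decomposition $\Gamma_{11}=E_1\Lambda_\Gamma E_2'$ and compute
$\tr(R_1\Gamma_{11})=\tr(E_2'R_1E_1\cdot\Lambda_\Gamma)=\tr(Q\Lambda_\Gamma)$, where $Q:=E_2'R_1E_1\in\mathrm{SO}(2)$. Parametrizing $Q$ by its rotation angle $\phi$ yields $\tr(Q\Lambda_\Gamma)=(\lambda_1+\lambda_2)\cos\phi$, and the convention $\lambda_1\geq|\lambda_2|$ forces $\lambda_1+\lambda_2\geq 0$, so the maximum is attained at $\phi=0$, i.e.\ $Q=I$, giving $R_1=E_2E_1'$ as claimed. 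The main subtlety is precisely the semi-SVD convention: since $R_1$ ranges over $\mathrm{SO}(2)$ rather than $\mathrm{O}(2)$, the standard SVD (whose orthogonal factors may have determinant $-1$) would make the reduction to ``maximize $(\lambda_1+\lambda_2)\cos\phi$'' fail; absorbing the sign into $\lambda_2$ so that $E_1,E_2\in\mathrm{SO}(2)$ and $\lambda_1+\lambda_2\geq 0$ is exactly what is needed for the optimum to lie at $Q=I$ rather than at a reflection in a disconnected component.
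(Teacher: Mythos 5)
Your proof is correct, and up to the last step it follows the same route as the paper: drop the scaling term (it is independent of $R$), use the angle--trace relation on $\SO(3)$ (Lemma~A.1(ii)) to convert the minimization of $\norm{\log(\cdot)}_F$ into the maximization of a trace, cyclically rearrange to get $\tr(\Gamma R)$, and use the block structure to reduce to maximizing $\tr(R_{1}\Gamma_{11})$ over $R_{1}\in\SO(2)$ (your extra conjugation/transposition step with $Q=U'$ is just a slightly longer way of doing the paper's single cyclic-permutation-of-trace step). Where you genuinely diverge is the final optimization: the paper invokes Mirsky's trace--singular-value inequality $\abs{\tr(A'B)}\le\sigma_A'\sigma_B$ together with the fact that $R_{1}$ has unit singular values, whereas you parametrize $Q=E_2'R_1E_1\in\SO(2)$ by its angle and compute $\tr(Q\Lambda_\Gamma)=(\lambda_1+\lambda_2)\cos\phi$ directly. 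Your version is more elementary and, arguably, more complete on exactly the point you flag: when $\det\Gamma_{11}<0$ the unconstrained Mirsky bound $\sigma_1+\sigma_2$ is not attainable over the connected group $\SO(2)$, and it is the semi-SVD convention $\lambda_1\ge\abs{\lambda_2}$ (hence $\lambda_1+\lambda_2\ge0$) that makes the maximizer sit at $Q=I$, i.e.\ $R_1=E_2E_1'$; the paper's one-line citation leaves this sign bookkeeping implicit, while your explicit one-parameter computation makes it airtight. (One tiny remark: when $\lambda_1+\lambda_2=0$ every $R_1$ is optimal, so $\hat R$ is a — not the — minimizer, but this does not affect the lemma as used.)
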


%
%
%
Each choice of $\sigma$ and $\pi$ produces a {\em minimally rotated version} $(\hat{U}_{\sigma,\pi}, D_\pi) = \linebreak (U \hat{R}_{\sigma,\pi} I_\sigma P'_\pi, D_\pi)$.
To provide a minimal pair as needed in Theorem~\ref{thm:main_result}, a combinatorial problem involving the $3! 2^{3-1} = 24$ choices of $(\sigma, \pi)$ needs to be solved, since the version of $X$ closest to $ (V,\Lambda)$ is found by comparing distances between  $(\hat{U}_{\sigma,\pi}, D_\pi) $ and $(V,\Lambda)$.
Fortunately, there are only six such minimally rotated versions corresponding to six choices of $(\sigma, \pi)$.
%
%
%
%
%
%
In particular, we  need only
$ \pi_1 : (1,2,3) \to (1,2,3)$,
$ \pi_2 : (1,2,3) \to (3,1,2)$,
$ \pi_3 : (1,2,3) \to (1,3,2)$,
and ${\sigma_1}  =(1,1,1)$, ${\sigma_2} = (-1,1,-1),$ and $\hat{R}_{{\sigma_j}, \pi_i}$ can be found for each
$({\sigma_j}, \pi_i)$, $i=1,2,3,j=1,2$.
The other pairs of permutations and sign-changes do not need to be considered because
each of them will produce one of the six minimally rotated versions, with the same distance from $(V,\Lambda)$.


\begin{theorem} \label{thm:minimaldistancep=3}
Given any $3 \times 3$ SPD matrices $X$ and $Y$, the distance (\ref{eq:quotientdistance}) is computed as follows.
\begin{enumerate}
\item[(i)] If the eigenvalues of $X$ (and also of $Y$) are all distinct, then  there are exactly twenty four versions of $X$, denoted by $(U_i, D_i), i=1,\ldots,24$, and for any version $(V,\Lambda)$ of $Y$,
    $d_{\Sc\Rc}(X,Y) = \min_{i=1,\ldots,24} d((U_i, D_i), (V,\Lambda)  ).$
\item[(ii)] If $d_1 = d_2 \neq d_3$ and $\{\lambda_1,\lambda_2,\lambda_3\}$ are distinct, then for any version $(V,\Lambda)$ of $Y$ and a version $(U,D)$ of $X$ satisfying $D = \mbox{diag}(d_1,d_1,d_3)$,
    $$d_{\Sc\Rc}(X,Y) = \min_{i=1,2,3, j = 1,2} d((\hat{U}_{\sigma_j,\pi_i}, D_{\pi_i}) , (V,\Lambda)  ),$$
    where $(\hat{U}_{\sigma_j,\pi_i}, D_{\pi_i})$, $i=1,2,3, j = 1,2$ are the six minimally rotated versions.
\item[(iii)] If $d_1 = d_2 \neq d_3$ and $\lambda_1 = \lambda_2 \neq \lambda_3$, choose $D = \mbox{diag}(d_1,d_2,d_3)$ and $\Lambda = \mbox{diag}(\lambda_1,\lambda_2,\lambda_3)$. For any versions $(U,D)$, $(V,\Lambda)$ of $X$ and $Y$,
    $$d_{\Sc\Rc}(X,Y) = \min_{i=1,2,3, j = 1,2} d(( U R_{\theta_{ij}} I_{\sigma_i}P_{\pi_j}', D_{\pi_j}), (VR_{\phi_{ij}},\Lambda)  ),$$
   where $R_\theta = \exp([a]_\times)$, $a = (0,0,\theta)'$ (cf. Appendix~\ref{sec:preliminaries}),
   and $({\theta_{ij}},\phi_{ij})$ simultaneously maximizes
    $G(\theta, \phi) = \tr (U R_\theta I_{\sigma_i}P_{\pi_j}' R_\phi' V').$
\item[(iv)] If $d_1 = d_2 = d_3$, then for any version $(V,\Lambda)$ of $Y$,
     $d_{\Sc\Rc}(X,Y)= d((V, D), (V,\Lambda)  ),$
     regardless of whether the eigenvalues of $Y$ are distinct or not.
\end{enumerate}
\end{theorem}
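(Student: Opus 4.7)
The plan is to treat each multiplicity case separately, combining the parameterization of the fibers $\Ec_X, \Ec_Y$ from Theorem~\ref{thm:versions} with the invariance of the geodesic distance (Proposition~\ref{prop:invariance_geoddist}), so that the infimum in (\ref{eq:quotientdistance}) collapses to a finite minimum (cases (i), (ii)), a finite minimum wrapped around a low-dimensional optimization (case (iii)), or is attained trivially (case (iv)). For case (i), Theorem~\ref{thm:versions} and the remark following it give $|\Ec_X|=|\Ec_Y|=3!\cdot 2^{2}=24$, and every element of $\Ec_X$ is $(U I_\sigma P_\pi', D_\pi)$. Because $d$ is invariant under simultaneous right multiplication and simultaneous eigenvalue permutation, one may fix a single $(V,\Lambda)\in\Ec_Y$ and minimize over the $24$ versions of $X$; this yields the same infimum as the full $24\times 24$ search. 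Case (iv) is immediate: $D=d_1 I_3$ forces $\Ec_X=\{(R, d_1 I_3): R\in\SO(3)\}$, and in $d^2$ the eigenvalue term $d_{\Dc}(d_1 I_3, \Lambda_1)^2$ is independent of $R$ and of the ordering of $\Lambda$'s eigenvalues, while the rotation term $k\,d_{\SO(3)}(R,V_1)^2$ is driven to zero by $R=V_1$; so the infimum is attained at $(V, d_1 I_3)$ for any chosen version $(V,\Lambda)$ of $Y$.

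For case (ii), Theorem~\ref{thm:versions} expresses every version of $X$ as $(URI_\sigma P_\pi', D_\pi)$ with $R\in G_D$, where $G_D$ consists of block-diagonal matrices $\mbox{diag}(R_1, r_2)$, $R_1\in\O(2)$, $r_2=\det(R_1)$. Writing $R_1$ as $R_\theta\in\SO(2)$ or as $R_\theta\cdot\mbox{diag}(1,-1)$ absorbs the disconnected component of $G_D$ into a redefinition of $\sigma$, so every version of $X$ takes the form $(U R_\theta I_{\sigma'} P_\pi', D_\pi)$ with $\theta\in\Real$, $\sigma'\in\boldsig_3^+$, $\pi\in S_3$. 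For each fixed $(\sigma',\pi)$, Lemma~\ref{lem:minimalrotation} supplies the explicit minimizer $\hat R_{\sigma',\pi}\in\SO(2)$ in $\theta$. It remains to reduce the resulting $4\cdot 6=24$ minimally rotated versions to exactly six distinct ones: the two permutations in $S_3$ that differ only by swapping the equal slots of $D$ produce the same orbit in $\Ec_X$ once $\hat R_{\sigma',\pi}$ absorbs the corresponding rotation in $G_D$, leaving three permutation classes; and within $\boldsig_3^+$ the two sign changes that differ only in the entries at the equal slots are absorbed by $R_\theta$, leaving two sign-change classes.

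For case (iii), the same analysis applies on both fibers. Using Proposition~\ref{prop:invariance_geoddist} to transport the discrete choices on the $Y$-side onto the $X$-side, every candidate pair takes the form $((U R_\theta I_{\sigma_i} P_{\pi_j}', D_{\pi_j}), (V R_\phi, \Lambda))$ with $R_\phi\in G_\Lambda$, and the six discrete pairs $(\sigma_i,\pi_j)$ are exactly those from case (ii). The squared distance splits as $k\,d_{\SO(3)}^2 + d_{\Dc}^2$, and only the first term depends on $(\theta,\phi)$. For $p=3$, any $W\in\SO(3)$ that rotates through angle $\alpha$ satisfies $\tfrac12\norm{\log W}_F^2=\alpha^2$ and $\tr W=1+2\cos\alpha$, so minimizing the rotation angle is equivalent to maximizing $\tr W$. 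Applying this to $W=V R_\phi P_{\pi_j} I_{\sigma_i} R_\theta' U'$ and using $\tr W=\tr W'$ produces the objective $G(\theta,\phi)=\tr(U R_\theta I_{\sigma_i} P_{\pi_j}' R_\phi' V')$ of the theorem, and its joint maximizer supplies $(\theta_{ij},\phi_{ij})$.

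The main obstacle is justifying the combinatorial reduction to the six effective pairs $(\sigma_i,\pi_j)$ in cases (ii) and (iii). This requires careful tracking of how the discrete sign-changes and permutations acting on $\Ec_X$ and $\Ec_Y$ interact with the continuous stabilizers $G_D$ and $G_\Lambda$, and identifying exactly which such discrete operations produce configurations not already reachable by the minimal rotation $\hat R_{\sigma,\pi}$ of Lemma~\ref{lem:minimalrotation}.
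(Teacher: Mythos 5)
Your proposal is correct and follows essentially the same route as the paper: it reduces the infimum in (\ref{eq:quotientdistance}) to finitely many comparisons by combining Theorem~\ref{thm:versions} with the invariance in Proposition~\ref{prop:invariance_geoddist} (extending the proof of Theorem~\ref{thm:minimaldistancep=2}), uses Lemma~\ref{lem:minimalrotation} to handle the continuous stabilizer $G_D$ in case (ii), and in case (iii) converts the joint minimization over $(\theta,\phi)$ into maximization of $G(\theta,\phi)$ via Lemma~\ref{lem:rotation3}(ii), exactly as in the paper. The six-pair reduction you flag as the delicate step is treated in the paper at essentially the same level of detail (asserted via absorption of the redundant sign-changes and equal-slot swaps into $R_\theta\in G_D$), so your sketch of that reduction matches the paper's argument rather than leaving a gap the paper fills.
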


The minimizer $({\theta_{ij}},\phi_{ij})$ of $G(\theta,\phi)$ in Theorem~\ref{thm:minimaldistancep=3}(iii) 
is found by a numerical method. Specifically, given the $m$th iterates $\theta^{(m)}, \phi^{(m)}$, the $(m+1)$th iterate $\theta^{(m+1)}$ is the solution $\theta$ in Lemma~\ref{lem:minimalrotation}, treating $VR_{\phi^{(m)}}$ as $V$. We then find $\phi^{(m+1)}$ similarly by using  Lemma~\ref{lem:minimalrotation}, with the role of $U$ and $V$ switched. In our experiments, convergence to the unique maximum was fast and reached by only a few iterations.

\section{Scaling--rotation interpolation of SPD matrices}\label{sec:interpolation}
For $X,Y \in \symp(p)$, \emph{ a scaling--rotation interpolation from $X$ to $Y$} is defined as any minimal scaling--rotation curve $f_{SR}(t) := \chi_o(t)$, $t \in [0,1]$, such that $f_{SR}(0) = X$, $f_{SR}(1) = Y$.
By definition, every scaling--rotation curve $\chi(t; U,D,A,L)$, and hence every scaling--rotation interpolation, has a log-constant scaling velocity $L$ and constant angular velocity $A$. The scalar $\tr(L)$ gives the (constant) speed at which log-determinant evolves: $\log(\det\chi(t)) = \log(\det(D)) + \tr(L) t$. Analogously, we view the scalar quantity $\norm{A}_F/\sqrt{2}$ as a constant {\em speed of rotation}, and for all $t\geq 0$ we define the \emph{amount of rotation} applied from time 0 to time $t$ to be $\theta_t := t \norm{A}_F/\sqrt{2}$. For a minimal pair $((U,D),(V,\Lambda))$ of $X$ and $Y$, and the corresponding scaling--rotation interpolation $f_{SR}$, we have
\begin{equation}\label{eq:logdet}
\log(\det f_{SR}(t) ) = (1-t) \log(\det(X))  + t \log(\det(Y)),
\end{equation}
and we define the \emph{ amount of rotation applied by $f_{SR}$ from $X$ to $Y$} to be
$\theta := \norm{\log(VU')}_F/\sqrt{2}$.
For $p = 2,3$, $\theta$ is equal to the angle of rotation.

\subsection{An application to diffusion tensor computing}\label{sec:applications}
This work provides an interpretative geometric framework in analysis of diffusion tensor magnetic resonance images \cite{LeBihan2001}, where diffusion tensors are given by $3\times 3$ SPD matrices. Interpolation of  tensors is important for fiber tracking, registration and spatial normalization of diffusion tensor images \cite{Batchelor2005,Chao2009681}.
The scaling--rotation curve can be understood as a deformation path from one diffusion tensor to another, and is nicely interpreted as  scaling of diffusion intensities and rotation of diffusion directions. This advantage in interpretation has not been found in popular geometric frameworks such as \cite{Pennec2006,Fletcher2007,Arsigny2007,Dryden2009,Bonnabel2009}. The approaches in \cite{Batchelor2005,Chao2009681,collard2012anisotropy,yang2012feature} also explicitly use rotation of directions and many scaling--rotation curves are very similar to the deformation paths given in \cite{collard2012anisotropy,yang2012feature}. We defer the discussion on the difference between our framework and those in \cite{collard2012anisotropy,yang2012feature} to Section~\ref{sec:discussion}.

As an example, consider interpolating from $X = \mbox{diag}(15,2,1)$ to $Y$, whose eigenvalues are $(100,2,1)$ and whose principal axes are different from those of $X$.
The first row of Fig.~\ref{fig:linearinterpolation} presents the corresponding evolution ellipsoids by the scaling--rotation interpolation $f_{SR}$. This evolution is consistent with human perception when deforming $X$ to $Y$. As shown in the left two bottom panels of Fig.~\ref{fig:linearinterpolation}, the interpolation  exhibits the constant angular rate of rotation, and log-constant rate of change of determinant.

\begin{figure}[tb!]
 \centering
  \includegraphics[width=1\textwidth]{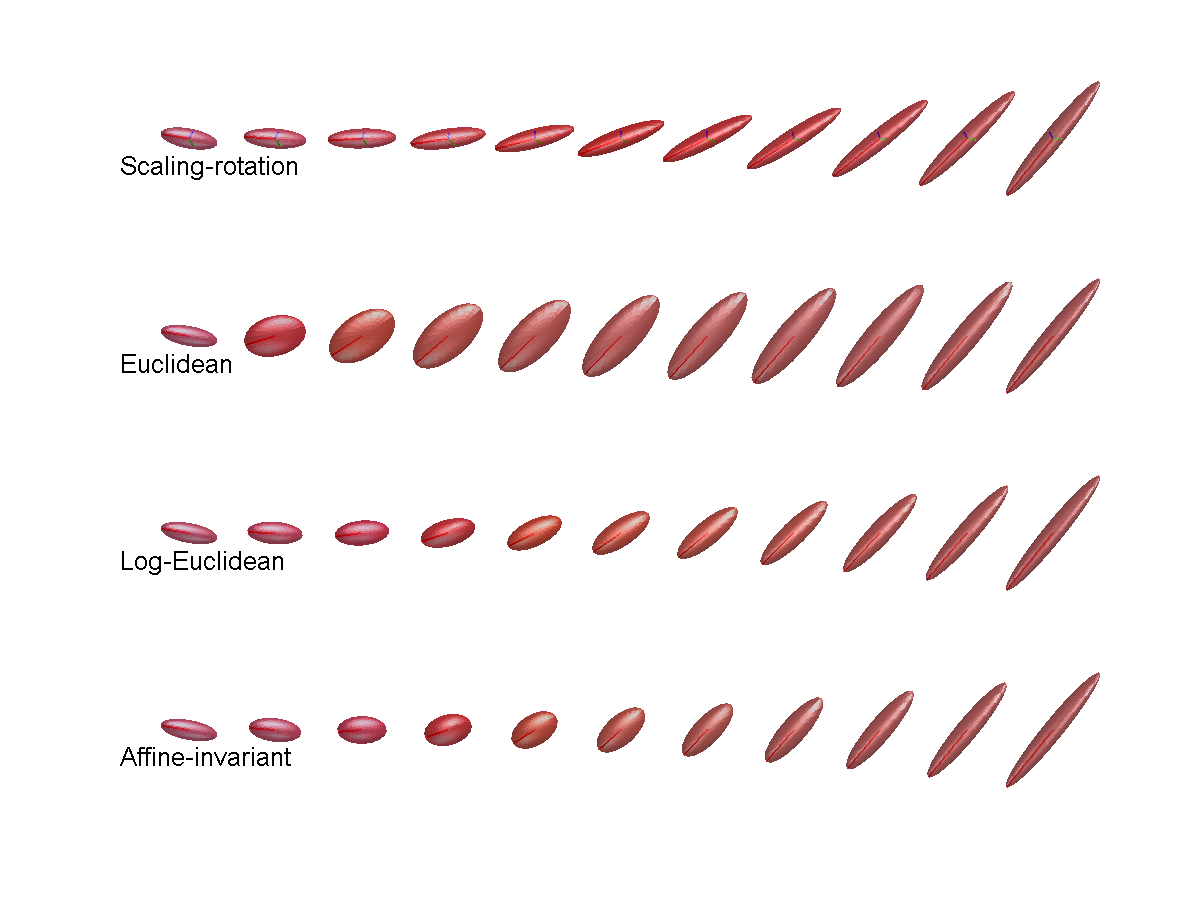}\\
 \vskip -0.3in
  \includegraphics[width=1\textwidth]{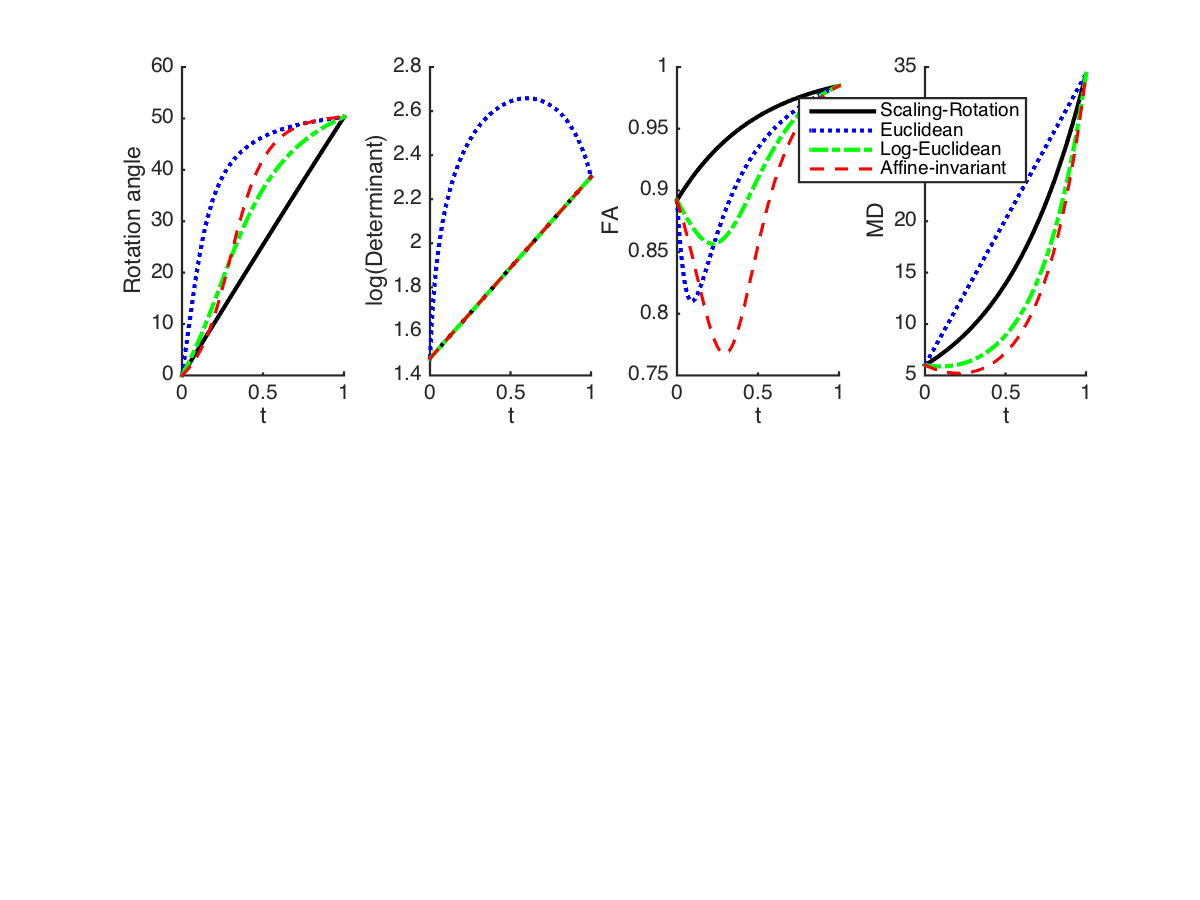}
  \vspace{-2in}
  \caption{(Top) Interpolations of two $ 3\times 3$ SPD matrices. Row 1: Scaling--rotation interpolation by the minimal scaling--rotation curve. Row 2: (Euclidean) linear interpolation on coefficients. Row 3: Log-Euclidean geodesic interpolation. Row 4: Affine-invariant Riemannian interpolation.  The pointy shape of ellipsoids on both ends is well-preserved in the scaling--rotation interpolation. \label{fig:linearinterpolation}
  (Bottom) Evolution of rotation angle, determinant, FA and MD for these four interpolations. Only the scaling--rotation interpolation provides a monotone pattern.}
\end{figure}

By way of comparison, the Euclidean interpolation in row 2 is defined by $f_E(t) = (1-t) X + t Y$. The log-Euclidean and affine-invariant Riemannian interpolation in rows 3 and 4  are defined by $f_{LE}(t) = \exp(  (1-t) \log(X)+ t \log(Y))$ and $f_{AI}(t) = X^{\half} \exp( t \log( X^{-\half} Y  X^{-\half})) X^{\half}$, respectively; see \cite{Arsigny2007}. For these interpolations, we define the rotation angle at time $t$ by the angle of swing from the major axis at time 0 to that at time $t$. These rotation angles are not in general linear in $t$, as the bottom left panel illustrates.
The log-Euclidean $f_{LE}$ and affine-invariant interpolations $f_{AI}$ are log-linear in determinant, and in fact (\ref{eq:logdet}) holds exactly for $f_{LE}$ and $f_{AI}$.
On the other hand, the Euclidean interpolation is known to suffer from the \emph{swelling effect}: $\det(f_E(t))  > \max(\det(X),\det(Y))$, for some $t \in [0,1]$ \cite{Arsigny2007}. This is shown in the bottom second panel of Fig.~\ref{fig:linearinterpolation} for the same example. The other interpolations, $f_{SR},f_{LE}$ and $f_{AI}$ do not suffer from the swelling effect.

Minimal scaling–rotation curves not only provide regular evolution of rotation
angles and determinant, but also minimize the combined amount of scaling
and rotation, as in Definition \ref{def:minimalSCROTcurve}. This results in a particularly desirable property: in many examples, the fractional anisotropy (FA) and mean diffusivity (MD) evolve monotonically. FA measures a degree of anisotropy that is zero if all eigenvalues are equal, and approaches 1 if one eigenvalue is held constant and the other two approach zero; see \cite{LeBihan2001}. MD is the average of eigenvalues $\mbox{MD}(X) = \tr(X)/3$.

In the example of Fig.~\ref{fig:linearinterpolation},  FA$(f_{SR}(t))$  increases monotonically.
In contrast, other interpolations of the highly anisotropic $X$ and $Y$  become less anisotropic.
This phenomenon may be called a \emph{fattening effect}: interpolated SPD matrices are more isotropic than the two ends \cite{Chao2009681}.
Moreover, log-Euclidean and affine-invariant Riemannian interpolations can suffer from a \emph{shrinking effect}: the MD of interpolated SPD matrices are smaller than those of the two ends \cite{Batchelor2005}, as shown in the bottom right panel. In this example, the scaling--rotation interpolation does not suffer from  fattening and shrinking effects. These adverse effects are less severe in $f_{SR}$ than in $f_{LE}$ or $f_{AI}$ in most typical examples, as shown in the online supplementary material.

The advantageous regular evolution results from the rotational part of the interpolation. To see this, consider, as in \cite{Batchelor2005}, a case where the interpolation by $f_{SR}$  consists only of rotation (a precise example is shown in Figure 1 in the online supplementary material).  The scaling-rotation interpolation preserves the determinant, FA and MD, while the other modes of interpolations exhibit irregular behavior in some of the measurements.
On the other hand, when $f_{SR}(t)$ is composed of pure scaling, then $f_{SR}(t) =  f_{LE}(t) = f_{AI}(t)$ for all $t$, and there is no guarantee that MD or FA grow monotonically for neither curve. The equality of these three curves in this special case is a consequence of the geometric scaling of eigenvalues in the scaling--rotation curve (\ref{eq:sc-rot-curve}), which in turn is a consequence of our use of  the Riemannian inner product (\ref{eq:RiemmanianMetric}).

In summary, while the three most popular methods suffer from swelling, fattening, or shrinking effects, the scaling--rotation interpolation  provides good regular evolution of all three summary statistics, and solely provides constant angular rate of rotation. More examples illustrating these effects in various scenarios are given in the online supplementary material.

\subsection{Comparison to other rotation--scaling schemes}\label{sec:discussion}
Geometric frameworks for $3 \times 3$ SPD matrices that decouple rotation from scaling have also been developed by \cite{collard2012anisotropy,yang2012feature}. However, our framework differs in two major ways. First, we allow unordered and equal eigenvalues in any dimension, while \cite{collard2012anisotropy,yang2012feature} considered only dimension 3 and only the case of distinct, ordered eigenvalues. In our framework, every scaling--rotation curve corresponds to geodesics in a smooth manifold, which is not possible if eigenvalues are ordered. This leads to a more flexible family of interpolations than those of \cite{collard2012anisotropy,yang2012feature}, as we illustrate in the online supplementary material.

Another difference lies in the choice of  the metric for $\SO(3)$, and the weight $k$ in (\ref{eq:RiemmanianMetric}).
While we use geodesic distance and interpolation determined by the standard Riemannian metric on $\SO(3)$, \cite{collard2012anisotropy} used a chordal distance and extrinsic interpolation.
As a consequence, the interpolation in \cite{collard2012anisotropy} is close to, but not equal to, a special case of minimal scaling--rotation curves, in particular when $k$ in (\ref{eq:RiemmanianMetric}) is small. An example illustrating this effect of $k$ is given in Section 2 of the online supplementary material.

\Appendix
\appendix
\section{Parameterization of scaling and rotation}\label{sec:preliminaries}%

The matrix exponential of a square matrix $Y$ is  $\exp({{Y}}) = \sum_{j=0}^{\infty}\frac{{{Y}}^j}{ {j!}}.$
For a square matrix $X$, if there exists a unique matrix $Y$ of smallest norm such that $X = \exp(Y)$, then we call $Y$ the principal logarithm of $X$, denoted $\log(X)$.


%
%

The exponential map from $\diag(p)$ to $\diagp(p)$, defined by the matrix exponential, is bijective. Moreover, the element-wise exponential and logarithm for the diagonal elements give the   matrix exponential and logarithm for $\diag(p)$ and $\diagp(p)$ \cite[Ch.18]{Gallier2011}.


Rotation matrices can be parameterized by antisymmetric matrices since the exponential map from $\asym(p)$  to $\SO(p)$ is onto.
Contrary to the $\diagp(p)$ case, the matrix exponential is not one-to-one.
The principal logarithm is defined on the set $\{R \in \SO(p) : R$ is not an involution$\}$, a dense open subset of $\SO(p)$.
For completeness, when there exists no principal logarithm of $R$, we use the notation $\log(R)$ to denote any solution $A$ of $\exp(A) = R$ satisfying that $\norm{A}_F$ is the smallest among all such choices of $A$.

This parameterization gives a physical interpretation of rotations. Specifically, in the case of $p = 3$,
%
%
a rotation matrix $R = \exp(A) \in\SO(3)$ can be understood as a linear operator rotating a vector in the real 3-space around an ``axis'' $a = (a_1,a_2,a_3)'$ by angle $\theta = \norm{a}_2$ (in radians), where $A \in \asym(3)$ is the cross-product matrix of $a$ defined by
$$A = [a]_\times =
\begin{bmatrix}
  0 & -a_3 & a_2 \\
  a_3 & 0 & -a_1 \\
  -a_2 & a_1 & 0 \\
\end{bmatrix} \in \asym(3).$$
Explicit formulas for the matrix exponential and logarithm are given in the following.
%
%
\begin{lemma}[\cite{Moakher2002,Gallier2011}]\label{lem:rotation3}
\begin{enumerate}
 \item[(i)] (Rodrigues' formula) Any $A \in \asym(3)$ equals $[a]_\times$ for some axis $a = \theta \tilde{a} \in \Real^3$, $\norm{\tilde{a}} = 1$.  An explicit formula for the matrix exponential of $A$ is  $\exp(A) = \Id+ [\tilde{a}]_\times \sin(\theta) +  [\tilde{a}]_\times^2 (1-\cos(\theta)).$

 \item[(ii)] For any $R \in \SO(3)$, there exists  $\theta \in [0,\pi]$ satisfying
   $ 2\cos(\theta) = \tr(R) - 1,  \norm{\log (R)}_F = \sqrt{2} |\theta|.$ If $\theta<\pi$, then $R$ has the principal logarithm $\log(R) = \frac{\theta}{2\sin(\theta)}(R - R')$ if $\theta \in (0,\pi)$, $0$ if $\theta = 0$.
\end{enumerate}
\end{lemma}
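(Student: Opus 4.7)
The plan is to prove both parts by exploiting the key algebraic identity that $[\tilde{a}]_\times^3 = -[\tilde{a}]_\times$ whenever $\|\tilde{a}\|_2 = 1$, which reduces the infinite exponential series to a closed form. To verify this identity, I would compute $[\tilde{a}]_\times^2$ directly as $\tilde{a}\tilde{a}' - I$ (a standard fact following from $v \times (v \times w) = (v \cdot w)v - (v \cdot v)w$), and then check that $[\tilde{a}]_\times^2 \cdot [\tilde{a}]_\times = -[\tilde{a}]_\times$ using $[\tilde{a}]_\times \tilde{a} = 0$.

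For part (i), given an arbitrary $A \in \asym(3)$, write $A = [a]_\times$ for the unique $a \in \Real^3$, set $\theta = \|a\|_2$, $\tilde{a} = a/\theta$ (the case $\theta = 0$ being trivial), so $A = \theta[\tilde{a}]_\times$. Then I would substitute into the defining series $\exp(A) = \sum_{j\geq 0} \theta^j [\tilde{a}]_\times^j / j!$, separate the $j = 0$ term, and use the recursion $[\tilde{a}]_\times^{2k+1} = (-1)^k[\tilde{a}]_\times$ and $[\tilde{a}]_\times^{2k+2} = (-1)^k[\tilde{a}]_\times^2$ (both for $k \geq 0$) to regroup the odd-power and even-power terms. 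The odd-power sum yields $\sin(\theta)[\tilde{a}]_\times$ and the even-power sum (starting at $j = 2$) yields $(1 - \cos(\theta))[\tilde{a}]_\times^2$, giving Rodrigues' formula.

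For part (ii), I would apply the trace to Rodrigues' formula, noting that $\tr([\tilde{a}]_\times) = 0$ (diagonal is zero) and $\tr([\tilde{a}]_\times^2) = \tr(\tilde{a}\tilde{a}' - I) = \|\tilde{a}\|_2^2 - 3 = -2$, which gives $\tr(R) = 3 - 2(1 - \cos\theta) = 1 + 2\cos\theta$. For the Frobenius norm, use $\|A\|_F^2 = -\tr(A^2) = -\theta^2\tr([\tilde{a}]_\times^2) = 2\theta^2$, so $\|A\|_F = \sqrt{2}|\theta|$. To recover $A$ from $R$ when $\theta \in (0,\pi)$, take the antisymmetric part of Rodrigues' formula: $R - R' = 2\sin(\theta)[\tilde{a}]_\times$ (since $I$ and $[\tilde{a}]_\times^2$ are symmetric and $[\tilde{a}]_\times$ is antisymmetric), and multiply both sides by $\theta/(2\sin\theta)$ to obtain $A = \theta[\tilde{a}]_\times = (\theta/(2\sin\theta))(R - R')$. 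The $\theta = 0$ case is immediate from $R = I$.

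The only real subtlety is justifying that this $A$ is the \emph{principal} logarithm when $\theta \in (0,\pi)$, i.e., the unique smallest-norm preimage of $R$ under $\exp$. This requires observing that any other $A_0 \in \asym(3)$ with $\exp(A_0) = R$ must correspond to a rotation by some angle $\theta_0$ about some axis satisfying $\cos\theta_0 = \cos\theta$, hence $\theta_0 \in \{\pm\theta + 2\pi k\}$, and $\|A_0\|_F = \sqrt{2}|\theta_0| \geq \sqrt{2}\theta = \|A\|_F$ with equality only when $\theta_0 = \pm\theta$; a short argument using the axis-angle ambiguity $([a]_\times, \theta) \sim ([-a]_\times, -\theta)$ then shows these correspond to the same antisymmetric matrix, establishing uniqueness of the minimal-norm preimage for $\theta < \pi$ and explaining why the formula breaks down precisely at the involution locus $\theta = \pi$.
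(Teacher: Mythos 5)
Your proposal is correct: the identity $[\tilde{a}]_\times^2=\tilde{a}\tilde{a}'-I$ (hence $[\tilde{a}]_\times^3=-[\tilde{a}]_\times$), the regrouping of the exponential series into odd/even parts, the trace and Frobenius-norm computations, and the minimal-norm argument for the principal logarithm are all the standard derivation. The paper itself gives no proof of this lemma---it is quoted from \cite{Moakher2002,Gallier2011}---and your argument is essentially the one found in those references, with the only implicit ingredient being surjectivity of $\exp:\asym(3)\to\SO(3)$ (or, alternatively, the eigenvalue structure $\{1,e^{\pm i\theta}\}$ of $R$) to guarantee the existence of $\theta$, which the paper also records in its appendix.
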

If there exists no principal logarithm of $R$, i.e., if $\theta = \pi$, then we take $\log(R)$ to be  either  of the two elements $A \in \asym(3)$ satisfying $\exp(A) = R$ and $\norm{A}_F = \sqrt{2} \pi$.

\section{Additional lemmas and proofs}

\emph{Proof of Theorem \ref{thm:versions}}.
For any $(U^*,D^*) \in (\SO \times \diagp)(p)$, there exists $V \in \SO(p)$ , $L \in \diagp(p)$ such that $U^* = UV$ and $D^* = DL$. Therefore, a version of $(U,D)$ can be written as $(UV,DL)$ satisfying
$UVDLV'U' = UDU',$
or equivalently,
\begin{equation}\label{eq:version_requirements}
VDLV' = D.
\end{equation}
The set of eigenvalues of $VDLV'$ is $\{d_il_i: i = 1,\ldots,p\}$, which should be the same as the eigenvalues of $D$. That is, $d_il_i = d_j$ for some $j$. In other words, for some permutation $\pi$, $DL = D_\pi$. There are at most $p!$ possible ways to achieve this. 

Observe that there exists $R \in \SO(p)$ such that $RP_\pi' = V$, for any $V$ and $\pi$. (\ref{eq:version_requirements}) is then $RP_\pi' D_\pi P_\pi R' = D$, which becomes $RDR' = D$ since $P_\pi' D_\pi P_\pi = D$.

The last statement of Theorem~\ref{thm:versions} can be seen from noting that $R$ and $D$ commute, so the eigenvector matrix of $R$ is $I$, with eigenvalues $\{e^{i\theta_j},e^{-i\theta_j},1, j=1,\ldots,\lfloor p/2\rfloor \}$ \cite[Corollary 2.5.11]{Horn2012}. However, all possible values of $\theta_j$ are either $0$ or $\pi$ because $R$ must be a real matrix. Therefore $R$ is an even sign-change matrix.
$\endproof$

\emph{Proof of Proposition \ref{prop:invariance_geoddist}}.
Since $S$ commutes with other diagonal matrices,
\begin{align*}
    d^2 & \left((R_1UR_2,SD_\pi),(R_1 V R_2,S\Lambda_\pi)  \right) \\
       & = \frac{k}{2}\norm{\log(R_1 U R_2 R_2' V' R_1')}^2_F + \tr(\log^2(S\Lambda_\pi D_\pi^{-1}S^{-1}))\\
      & = \frac{k}{2}\norm{\log( U V')}^2_F  + \tr(\log^2( \Lambda  D^{-1} )     = d^2\left((U,D),(V,\Lambda)\right). \ \endproof
\end{align*}

\emph{Proof of Theorem~\ref{thm:1alternative}}.
We use the following lemmas.

\begin{lemma}\label{lem:Tbad_finite} For any $D,L \in \diag(p)$, $T_{bad} = \{ t \in \Real : \Jc_{D\exp(Lt)} \neq \Jc_{D,L}\} $ has at most $p(p-1)/2$ elements.
\end{lemma}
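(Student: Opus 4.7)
The plan is to observe that the inclusion $\Jc_{D\exp(Lt)} \supseteq \Jc_{D,L}$ (in the sense that $\Jc_{D\exp(Lt)}$ is always coarser than or equal to $\Jc_{D,L}$) holds for every $t$, so that $t \in T_{bad}$ exactly when some pair $(i,j)$ lying in \emph{different} blocks of $\Jc_{D,L}$ gets merged in $\Jc_{D\exp(Lt)}$. I will then show that each such pair contributes at most one bad value of $t$, and count pairs.

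First I would write $D\exp(Lt) = \diag(d_1 e^{l_1 t}, \ldots, d_p e^{l_p t})$ and recall that $i,j$ belong to the same block of $\Jc_{D,L}$ iff $d_i = d_j$ \emph{and} $l_i = l_j$, whereas $i,j$ belong to the same block of $\Jc_{D\exp(Lt)}$ iff $d_i e^{l_i t} = d_j e^{l_j t}$. The forward implication is immediate: if $d_i=d_j$ and $l_i=l_j$ then $d_i e^{l_i t} = d_j e^{l_j t}$ for every $t$, so $\Jc_{D\exp(Lt)}$ can never be strictly finer than $\Jc_{D,L}$. Hence $t \in T_{bad}$ iff there exists a pair $\{i,j\}$ with either $d_i \neq d_j$ or $l_i \neq l_j$ for which the equation $d_i e^{l_i t} = d_j e^{l_j t}$ holds.

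Next I would analyse this equation pair by pair. If $l_i = l_j$ but $d_i \neq d_j$, the equation reduces to $d_i = d_j$ and has no solution, so such a pair contributes no bad $t$. If $l_i \neq l_j$, the equation rearranges to $e^{(l_i - l_j)t} = d_j/d_i$; since $d_i, d_j > 0$, this has the unique solution $t = \log(d_j/d_i)/(l_i - l_j)$, so such a pair contributes at most one bad $t$.

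Finally I would take the union bound over all unordered pairs $\{i,j\} \subset \{1,\ldots,p\}$, of which there are $\binom{p}{2} = p(p-1)/2$, and conclude $|T_{bad}| \le p(p-1)/2$. There is no real obstacle here; the only mild care needed is the observation that the $\Jc_{D,L}$-refinement of $\Jc_{D\exp(Lt)}$ goes in the right direction, ensuring that bad times arise only from merging (not splitting) of blocks, which is what makes the pairwise count valid.
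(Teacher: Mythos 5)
Your proof is correct and follows essentially the same route as the paper's: both argue pairwise that indices equivalent under $\Jc_{D,L}$ stay merged for all $t$, that a non-equivalent pair $\{i,j\}$ can satisfy $d_i e^{l_i t} = d_j e^{l_j t}$ for at most one $t$, and then count the $p(p-1)/2$ pairs. Your version merely makes explicit the case split ($l_i = l_j$ versus $l_i \neq l_j$) that the paper leaves implicit.
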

\begin{proof}
Let $D = \mbox{diag}(d_1,\ldots,d_p)$ and $L = \mbox{diag}(l_1,\ldots,l_p)$.
For $1\le i < j \le p $, if $i \sim_{D,L} j$, \emph{i.e.}, $d_i = d_j$ and $l_i = l_j$, then $i \sim_{D\exp(Lt)} j$ (or $d_i\exp(l_it) = d_j \exp(l_jt)$) for all $t$ and the indices $i$ and $j$ are in a same block of $ \Jc_{D,L}$ and $\Jc_{D\exp(Lt)}$ for all $t$. If $i \not\sim_{D,L} j$, then $i \sim_{D\exp(Lt)} j$ for at most one $t$. 
 The result follows from the fact that there are only $p(p-1)/2$ pairs of $i$ and $j$.
\end{proof}

\begin{lemma}\label{lem:ABequivalence}
Let $I \subset \Real$ be a positive-length interval containing 0. Let $G \subset \SO(p)$ be a Lie subgroup of $\SO(p)$, and let $\gf \subset \so(p)$ denote the Lie algebra of $G$.
Then for any $A,B \in \so(p)$,
\begin{align}\label{eq:1}
B - A \in \gf,\ (\ad_{{B}})^j({A}) \in \gf \ \mbox{ for all }\ j \ge 1
\end{align}
if and only if
there exists a $C^\infty$ map $g: I \to G$ such that
\begin{equation}\label{eq:2}
\exp(t{B}) = \exp(t{A})g(t) \ \mbox{ for all }\ t \in I.
\end{equation}
\end{lemma}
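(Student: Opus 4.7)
My plan is to reduce (\ref{eq:2}) to an infinitesimal Maurer--Cartan condition and then match coefficients with (\ref{eq:1}). The equation $\exp(tB) = \exp(tA)\,g(t)$ uniquely determines $g(t) = \exp(-tA)\exp(tB)$, which is automatically smooth with $g(0) = I$; hence (\ref{eq:2}) is equivalent to asking whether this particular $g$ takes values in $G$ throughout $I$. Since $G$ is a Lie subgroup of $\SO(p)$ with Lie algebra $\gf$, and $g(0) = I \in G$, a standard integral-curve argument (see, e.g., Gallier, Ch.~18) shows $g(I) \subset G$ if and only if the left-logarithmic derivative $g(t)^{-1} g'(t)$ lies in $\gf$ for every $t \in I$.

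A direct application of the product rule, using that $A$ commutes with $\exp(-tA)$, gives $g'(t) = \exp(-tA)(B-A)\exp(tB)$. Combining this with the identity $\mathrm{Ad}_{\exp(X)} = \exp(\ad_X)$ and $\ad_B(B) = 0$ yields
\begin{equation*}
g(t)^{-1} g'(t) \;=\; \exp(-tB)(B - A)\exp(tB) \;=\; B - \exp(-t\,\ad_B)(A) \;=\; (B - A) + \sum_{j \ge 1} \frac{(-1)^{j+1}}{j!}\, t^{j}\, \ad_B^{j}(A).
\end{equation*}

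To finish, I invoke the elementary fact that a real-analytic curve $t \mapsto \sum_{j \ge 0} c_j t^j$ takes values in a finite-dimensional (hence closed) subspace $\gf \subset \so(p)$ for all $t$ in a positive-length interval containing $0$ if and only if every coefficient $c_j$ lies in $\gf$. The ``only if'' direction follows by taking $j$-fold derivatives at $0$ of a curve whose values sit in the linear subspace $\gf$, and the ``if'' direction is immediate from closedness of $\gf$. Applied to the displayed power series, $g(t)^{-1} g'(t) \in \gf$ for all $t \in I$ precisely when $B - A \in \gf$ and $\ad_B^{j}(A) \in \gf$ for every $j \ge 1$, which is exactly condition (\ref{eq:1}). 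The only substantive subtlety is the Lie-subgroup fact invoked in the first paragraph (asserting that an $\gf$-valued Maurer--Cartan form forces $g$ to stay in $G$); this is standard and I plan to cite it rather than reprove it.
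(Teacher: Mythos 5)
Your proposal is correct, and at its core it runs on the same two ingredients as the paper's proof: the identity $g(t)^{-1}g'(t)=\exp(-t\,\ad_B)(B-A)=B-\exp(-t\,\ad_B)(A)$ and the principle that a curve through the identity stays in $G$ exactly when its logarithmic derivative stays in $\gf$, with the passage between ``curve in $\gf$'' and ``all Taylor coefficients in $\gf$'' handled by differentiating at $t=0$ and by closedness of $\gf$, just as in the paper. The organizational difference is real, though: you observe at the outset that (\ref{eq:2}) forces $g(t)=\exp(-tA)\exp(tB)$, so both implications collapse into the single question of whether this explicit curve remains in $G$, and you get the displayed power series by a direct product-rule computation. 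The paper instead treats $g$ as unknown, derives the linear ODE $X'=-\ad_B(X)$ for $X=g^{-1}g'$ in the necessity direction, and in the sufficiency direction constructs $g$ by solving $g^{-1}g'=X(t)$ inside $G$ (quoting global existence for left-invariant time-dependent ODEs on the compact group) and then verifies (\ref{eq:2}) via the auxiliary functions $Y_1,Y_2$ and $h(t)=\exp(tA)g(t)\exp(-tB)$. What your route buys is brevity and a cleaner symmetry between the two directions; what it costs is that the one genuinely nontrivial step --- that an $\gf$-valued logarithmic derivative with $g(0)=I$ forces $g(I)\subset G$ --- is outsourced to a citation, whereas the paper effectively proves it by hand; this is acceptable since the paper itself leans on a quoted ``known fact'' at the corresponding point. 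Two small points to keep straight if you write this up: in the necessity direction of your subgroup criterion you should use that $g$ is given as a $C^\infty$ map into $G$ (or that Lie subgroups are initial submanifolds), so that $g'(t)\in T_{g(t)}G$ is legitimate; and since $I$ may have $0$ as an endpoint, the derivatives at $0$ in your coefficient argument should be taken one-sided, which changes nothing since $\gf$ is closed.
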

\begin{proof}
Throughout the proof, the ``prime'' symbol denotes derivative, not transpose.
Suppose first (\ref{eq:2}) holds. For $t \in I$, define
$X(t) = g(t)^{-1}g'(t)$ and $Y(t) = g(t)^{-1} A g(t)$.
Since $g'(t) \in T_{g(t)} G$, $X(t) = g(t)^{-1}g'(t) \in T_{\Id} G - \gf$. Thus, $X$ is a $C^\infty$ map $I \to \gf$, and $Y$ is a $C^\infty$ map $I \to \so(p)$.
Differentiating (\ref{eq:2}) gives
\begin{align*}
\exp(t{B}) B &= \exp(t{A})(A g(t) + g'(t)) = \exp(t{A})g(t)(Y(t) + X(t) \\
             &= \exp(t{B})(Y(t) + X(t).
\end{align*}
Therefore,
\begin{equation}\label{cc}
X(t) + Y(t) = B  \ \mbox{ for all }\ t \in I.
\end{equation}
A simple computation leads that
\begin{equation}\label{15}
Y'(t) = [Y(t),X(t)] = -[X(t),Y(t)].
\end{equation}
From (\ref{cc}), we get
\begin{equation}\label{s1}
X'(t) = - Y'(t) = [X(t), Y(t)] = [X(t), B - X(t)] = [X(t), B],
\end{equation}
and consequently
\begin{equation}\label{s2}
X'(t) = - \ad_B(X(t)).
\end{equation}

Equation (\ref{s2}) is a constant-coefficient linear differential equation for a function $X : I \to \so(p)$. The general solution is therefore
\begin{equation}\label{xt1}
X(t) = \exp( -t \ad_B) X(0).
\end{equation}
From general Lie group theory, we have, for $W \in \gf$,
$\exp(\ad_W) = \mbox{Ad}_{\exp(W)}$, where $\mbox{Ad}_h : \gf \to \gf$ is conjugation by $h$. Thus $X(t)$ can also be written as
\begin{equation}\label{xt2}
X(t) = \exp(-tB) X(0) \exp(tB).
\end{equation}
It is easy to check that (\ref{xt2}) solves the ordinary differential equation (\ref{s2}) (or equivalently, (\ref{s1})).
Now, since $g(0) = \Id$, $Y(0) = A$. Thus (\ref{cc}) implies that $X(0) = B-A$.
Since $X(t)$ lies in the vector space $\gf$, so do the derivatives $X^{(j)}(0)$ of all orders $j \ge 0$. So $B - A = X(0) \in \gf$, $X^{(j)}(0) \in \gf$ for all $j \ge 1$.
So
\begin{align*}
B - A &= X(0) \in \gf,\\
(\ad_{{B}})^j({A}) &= -(\ad_{{B}})^j({B-A}) = \pm X^{(j)}(0) \in \gf \ \mbox{ for all }\ j \ge 1,
\end{align*}
leading to the conditions (\ref{eq:1}).

Next, suppose (\ref{eq:1}) holds.
Define $C = B-A$. Then
$(\ad_{{B}})^j({C}) = -(\ad_{{B}})^j({A}) \in \gf$ for all $j\ge 1 $.
Define $X(t) = \exp( -t \ad_B)C$; then for all $t$
$$X(t) = C + \sum_{j=1}^\infty \frac{1}{j!}(-t \ad_B)^j (C) \in \gf.$$
Note that $X$ is the unique solution of the initial value problem
$X'(t) = -[B,X(t)],\ X(0) = C.$
From (\ref{xt1})-(\ref{xt2}), $X(t)$ can be written as
$$X(t) = \exp(-tB) C \exp(tB) = B - \exp(-tB) C \exp(-tB).$$

It is a known fact that for a compact Lie group $G$ and with Lie algebra $\gf$, and any smooth $X_1: \Real \to \gf$, the initial value problem
$
g(t)^{-1} g'(t) = X_1(t),\ g(0) = \mbox{identity element of} \ G$,
has a unique solution, that the solution is smooth, and that the maximal time-domain of the solution is all of $\Real$. Since $G \subset \SO(p)$ is compact and $X$ is smooth, we have a smooth solution
$g: \Real \to G$ of the  initial value problem
$$g(t)^{-1} g'(t) = X(t),\ g(0) = \Id.$$

Define $Y_1(t) = g(t)^{-1}Ag(t)$. Then, as computed earlier in (\ref{15}), $Y_1'(t) = [Y_1(t),X(t)].$
Define $Y_2(t) = \exp(-tB)A \exp(tB) = B - X(t)$. Then $Y_2'(t) = - X'(t) = [B, X(t)] = [X(t) + Y_2(t), X(t) ] = [Y_2(t),X(t)]$.
Note also that $Y_1(0) = A = Y_2(0)$. Thus
$Y_1,Y_2$ satisfy the same linear initial-value problem for a function $Y: \Real \to \so(p)$, hence are identically equal.
Therefore $X(t) + Y_1(t) = X(t) + Y_2(t) = B$ for all $t$.

Define $h(t) = \exp(tA) g(t) \exp(-tB)$. Then
\begin{align*}
h'(t) &= \exp(tA)(A g(t) + g'(t) - g(t)B) \exp(-tB) \\
      &= \exp(tA) g(t) (Y_1(t) + X(t) -B) \exp(-tB) \equiv 0,
\end{align*}
so $h(t) = h(0) = \Id$ for all $t \in \Real$. Therefore the function $g$ satisfies (\ref{eq:2}).
\end{proof}

We now provide a proof of Theorem~\ref{thm:1alternative}. Let $(V,\Lambda) \in (\SO \times \diagp)(p)$, $B \in \asym(p)$, $N \in \diag(p)$, and $\gamma = \gamma(V,\Lambda, B,N) : I \to (\SO \times \diagp)(p)$.

Suppose first that  $\chi(t)  = c (\gamma(t)),$ for all $t \in I$.
Theorem~\ref{thm:versions} indicates that for all $t \in I$, there exist
$R(t) \in G_{D\exp(Lt)}$ and $\pi_t \in S_p$ such that
\begin{align}
\exp(tB)V       &= \exp(tA) U R(t) P_{\pi_t}', \label{eq:new1}\\
\exp(tN)\Lambda &= \exp(tL_{\pi_t}) D_{\pi_t}. \label{eq:new2}
\end{align}
Setting $t = 0$, we have
\begin{equation}\label{eq:new3}
V =  UR(0)P_{\pi_0}' ,\ \Lambda = D_{\pi_0}.
\end{equation}
From (\ref{eq:new2}) and (\ref{eq:new3}), we get
$tN + \log D_{\pi_0} = t L_{\pi_t} + \log D_{\pi_t}$.
Thus for $i = 1,\ldots, p$ and all $t \in I \backslash \{0\}$,
\begin{equation}\label{eq:new4}
    n_i =  l_{t,i} + \frac{c_{t,i}}{t},
\end{equation}
where $n_i, l_{t,i}$ and $c_{t,i}$ are the $i$th diagonal entries of $N$, $L_{\pi_t}$ and $\log(D_{\pi_t} D^{-1}_{\pi_0})$, respectively. As $t, \pi_t$ and $i$ range over their possible values, there are only finitely many values of $l_{t,i}$ and $c_{t,i}$. Therefore, unless $c_{t,i} = 0$ for all $i$ and $t \neq 0$, the right hand side of (\ref{eq:new4}) is unbounded as $t \to 0$, contradicting the constancy of $n_i$. Thus,
\begin{equation}\label{eq:new5}
    D_{\pi_t} = D_{\pi_0},\  L_{\pi_t} = N = L_{\pi_0} \ \mbox{for all}\ t \in I.
\end{equation}
Next, let $T_{bad} = \{ t \in I : \Jc_{D\exp(Lt)} \neq \Jc_{D,L}\} $, a finite set (\emph{cf.} Lemma~\ref{lem:Tbad_finite}). On $I\backslash T_{bad}$, the partition $\Kc$ determined by $\exp(tN)\Lambda = \exp(tL_{\pi_0})D_{\pi_0}$ is constant, as is the partition $\Jc_{D,L}$ determined by $D\exp(Lt)$. Thus we can replace $\pi_t$, for all $t \in I \backslash T_{bad}$, by a constant permutation $\pi_\infty$ that carries $\Jc_{D,L}$ to $\Kc$, without affecting the truth of (\ref{eq:new1})-(\ref{eq:new2}). With this replacement applied to (\ref{eq:new1}), we have
\begin{equation}\label{eq:new6}
    \exp(tB)V =\exp(tB) U R(0) P_{\pi_0}' = \exp(tA) U R(t) P_{\pi_\infty}',
\end{equation}
for   $t \in I \backslash T_{bad}$.
Letting $\tilde{A} = U'AU$, $\tilde{B} = U'AU$ and $P = P'_{\pi_\infty} P_{\pi_0}$, we rewrite   (\ref{eq:new6}) as
\begin{align}\label{eq:gt}
\exp(t\tilde{B}) = \exp(t\tilde{A}) g(t),
\end{align}
for all $ t \in I \backslash T_{bad},$  where $ g(t) = R(t) P R(0)'$.
Thus
\begin{equation}\label{eq:new7}
    R(t) = \exp(-t\tilde{A}) \exp(t\tilde{B}) R(0) P',  \  t \in I \backslash T_{bad}.
\end{equation}
The right hand side of (\ref{eq:new7}) is continuous on $I$. Hence the left hand side of (\ref{eq:new7}) continuously extends to each $t_{bad} \in  T_{bad}$, replacing  $R(t_{bad})$ with $\lim_{t \to t_{bad}} R(t)$. With this replacement for $R(t)$, (\ref{eq:gt}) is true for all $t \in I$, and the new functions $t \mapsto R(t)$ and $t \mapsto g(t)$ are continuous on all of $I$. Evaluating (\ref{eq:gt}) at $t = 0$, we get $\Id = g(0) = R(0) P R(0)'$, implying that $P = \Id$.

Note that $R(t) \in G_{\Jc_{D,L}}$ for all $t \in I \backslash T_{bad}$. Since $G_{\Jc_{D,L}}$ is a closed subset of $\SO(p)$, continuity implies that $R(t) \in G_{\Jc_{D,L}}$ for all $t \in T_{bad}$ as well. From (\ref{eq:new7}), it then follows that  $t \mapsto R(t)$ is a $C^\infty$ function $I \to G_{\Jc_{D,L}}$. Therefore there exists a $C^\infty$ map $g: I \to G_{\Jc_{D,L}}$ satisfying (\ref{eq:gt}) for all $t \in I$.
Then Lemma~\ref{lem:ABequivalence} shows that the asserted conditions for $B$ are necessary.

For the sufficiency, Lemma~\ref{lem:ABequivalence} shows that there exists a $C^\infty$ function $g: I \to G_{\Jc_{D,L}}$ satisfying (\ref{eq:gt}). From this it is easy to check that, for $R$ and $\pi$ as in the assumption, the eigen-composition of $\gamma(URP_\pi',D_\pi,B, L_\pi)$ is $\chi$.
$\endproof$

\emph{Proof of Corollary \ref{cor:them3.8}}.
Let $t_0 \in I$ be the such that eigenvalues of $\chi(t_0)$ are all distinct. Then
$\Jc_{{D,L}} = \cap_{t \in I} \Jc_{{D\exp(Lt)}} \subset \Jc_{{D\exp(Lt_0)}}$. Since $\Jc_{{D\exp(Lt_0)}}$ has only singleton blocks, so does $\Jc_{{D,L}}$.
Then $\gf_{D,L}$ only consists of $\0v$, and conditions (i)-(ii) in Theorem~\ref{thm:1alternative} are simplified to $B = A$. Finally Theorem~\ref{thm:versions} shows that any $R \in G_{D,L}$ is an even sign-change matrix.  
$\endproof$

\emph{Theorem~\ref{thm:properties}} is easily obtained, and we omit the proof.

\emph{Proof of Theorem~\ref{thm:properties2}}.
By Theorem~\ref{thm:properties}, it is enough to   show the triangle inequality. Fix a version of $Z$, say $(U,D)$. Since all eigenvalues of $D$ are distinct,  there exists a unique minimizer $(V_X,\Lambda_X) \in \Ec_X$ such that $d((V_X,\Lambda_X), (U,D)  ) \le d((V,\Lambda), (U,D)  )$  for all $(V,\Lambda) \in \Ec_X$. Likewise, denote $(V_Y,\Lambda_Y) \in \Ec_Y$ the unique minimizer with respect to $(U,D)$.
Then $d_{\Sc\Rc}(X,Z)+d_{\Sc\Rc}(Z,Y) = d((V_X,\Lambda_X),(U,D)) + d((V_Y,\Lambda_Y),(U,D))$. Since $d$ is a metric, by the triangle inequality for $((\SO\times \diagp)(p), d)$,
$d((V_X,\Lambda_X),(U,D)) + d((V_Y,\Lambda_Y),(U,D)) \ge d((V_X,\Lambda_X),(V_Y,\Lambda_Y))$. The proof is concluded by noting that $d_{\Sc\Rc}(X,Y)  \le d((V_X,\Lambda_X),(V_Y,\Lambda_Y))$.
$\endproof$

\emph{Proof of Theorem~\ref{thm:horizontal_geodesic_and_scarotcurve}}.
The following lemma is  used in the proof.
\begin{lemma}\label{thm:rotation-general}
If $((U,D),(V,\Lambda))$ is minimal for $\Ec_X$ and $\Ec_Y$, then for any  $R \in G_{D,\Lambda}$ and $\pi \in S_p$,  the shortest-length geodesics connecting
\begin{equation} \label{eq:rotation-general}
(U R P_\pi' , D_\pi)\ \mbox{   and   }\ (V R P_\pi' , \Lambda_\pi)
\end{equation}
are also minimal. (These minimal pairs are said to be \emph{equivalent} to each other.)
\end{lemma}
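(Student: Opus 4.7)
The plan is to prove this lemma directly by combining two observations: (i) the candidate pair $((URP_\pi', D_\pi),(VRP_\pi', \Lambda_\pi))$ actually lies in $\Ec_X \times \Ec_Y$, and (ii) the geodesic distance between the two new points equals the geodesic distance between $(U,D)$ and $(V,\Lambda)$. Once both are established, minimality transfers immediately because the original distance already equals $d_{\Sc\Rc}(X,Y)$.

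For step (i), I would invoke Theorem~\ref{thm:versions}. A pair $(URP_\pi', D_\pi)$ is a version of $X=UDU'$ precisely when $R \in G_D$; the analogous requirement for $(VRP_\pi',\Lambda_\pi) \in \Ec_Y$ is $R \in G_\Lambda$. The assumption $R \in G_{D,\Lambda} = G_D \cap G_\Lambda$ from Remark~\ref{remark:RinRDR=R} supplies both. An explicit check suffices: $URP_\pi' D_\pi P_\pi R' U' = U R D R' U' = UDU' = X$, and similarly for $Y$.

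For step (ii), the cleanest route is to apply Proposition~\ref{prop:invariance_geoddist} with $R_1 = I$, $R_2 = R P_\pi' \in \SO(p) \subset \O(p)$, $S = I$, and the given permutation $\pi$. This yields
\begin{equation*}
 d\bigl((U,D),(V,\Lambda)\bigr) = d\bigl((URP_\pi', D_\pi),(VRP_\pi', \Lambda_\pi)\bigr).
\end{equation*}
Alternatively, one can verify this by hand: the rotation piece reduces via $\log(VRP_\pi' (URP_\pi')') = \log(VU')$ since $P_\pi'P_\pi = I$ and $RR' = I$, while the scaling piece reduces via $\log(\Lambda_\pi D_\pi^{-1}) = P_\pi \log(\Lambda D^{-1}) P_\pi'$, which has the same Frobenius norm as $\log(\Lambda D^{-1})$ because $P_\pi$ is orthogonal.

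To conclude, since $(URP_\pi', D_\pi) \in \Ec_X$ and $(VRP_\pi',\Lambda_\pi) \in \Ec_Y$ (step i), the infimum defining $d_{\Sc\Rc}(X,Y)$ is at most the distance between them. But by step (ii) this distance equals $d((U,D),(V,\Lambda)) = d_{\Sc\Rc}(X,Y)$ by the minimality hypothesis on $((U,D),(V,\Lambda))$. Hence the new pair achieves the infimum, making any shortest-length geodesic between them a minimal geodesic for $\Ec_X$ and $\Ec_Y$. I do not expect any substantial obstacle here; the entire content of the lemma is an invariance statement, and the only thing to be careful about is ensuring that $R \in G_{D,\Lambda}$ really is both strong enough (to stay inside the fibers) and preserved by the Proposition's hypotheses (which only require membership in $\O(p)$).
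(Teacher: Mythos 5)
Your proposal is correct and follows essentially the same route as the paper's own proof: verify via Theorem~\ref{thm:versions} that $(URP_\pi',D_\pi)$ and $(VRP_\pi',\Lambda_\pi)$ are versions of $X$ and $Y$ (using $R\in G_{D,\Lambda}=G_D\cap G_\Lambda$), and then apply the invariance of $d$ in Proposition~\ref{prop:invariance_geoddist} to conclude the distance is unchanged, so minimality transfers. The extra explicit checks you include are fine but not needed beyond what the paper states.
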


\emph{Proof of Lemma~\ref{thm:rotation-general}.}
The result is obtained by two facts. $(U R P_\pi' , D_\pi)$ is a version of $X = UDU'$ and  $(V R P_\pi', \Lambda_\pi)$ is a version of $Y = V\Lambda V'$; By the invariance of $d$ (Proposition~\ref{prop:invariance_geoddist}), we have  $
d((U R P_\pi', D_\pi), (V R P_\pi', \Lambda_\pi))= d((U, D), (V, \Lambda))$. $\endproof$

To prove (i) of Theorem~\ref{thm:horizontal_geodesic_and_scarotcurve}, suppose $((U_1,D_1),(V_1,\Lambda_1))$ is another minimal pair. Then there exist $R \in G_D$, $\pi \in S_p$ such that
$U_1 = URP_{\pi}'$, $D_1 = D_{\pi}$. By Proposition~\ref{prop:invariance_geoddist},
\begin{align}
 d((U_1,D_1),(V_1,\Lambda_1)) &= d((URP_{\pi}',D_{\pi}),(V_1,\Lambda_1)) \nonumber \\
 &= d((UR,D),(V_1 P_{\pi^{-1}}' ,\Lambda_{1,\pi^{-1}})), \ \mbox{ where }\ \Lambda_{1,\pi^{-1}}  = (\Lambda_1)_{\pi^{-1}}, \nonumber \\
 & =d((U,D),(V_1 P_{\pi^{-1}}' R,\Lambda_{1,\pi^{-1}})). \nonumber
\end{align}
The conditions of $D$ and $\Lambda$ lead that $G_D \subset G_{\Lambda_\pi}$ for all $\pi \in S_p$, which in turn leads $R\in G_{\Lambda_{1,\pi^{-1}}}$.
 Since $(V_1 P_{\pi^{-1}}' R,\Lambda_{1,\pi^{-1}}) \in \Ec_Y$, the uniqueness assumption gives $V_1 = V R P_{\pi}$ and $\Lambda_1 = \Lambda_{\pi}$. Thus by Lemma~\ref{thm:rotation-general}, all minimal pairs are equivalent to each other.
Moreover, the scaling--rotation curve corresponding to the shortest-length geodesic between the minimal pair $(U_1,D_1)$ and $(V_1,\Lambda_1)$ is the same as $\chi_o(t)$ (Theorem~\ref{thm:1alternative}). Thus $\chi_o$ is unique.
%

For (ii), let $\gamma_0(t) = \gamma(t; U,D,A,L)$ and $\gamma_1(t) = \gamma(t; U,D,B,L_1)$ be the two minimal geodesics, where $A = \log(VU'), L = \log(D^{-1}\Lambda), B = \log(V_1U')$ and $L_1 = \log(D^{-1}\Lambda_1)$. The length-minimizing property of minimal geodesics implies that both $\frac{d}{dt}\gamma_0(t) |_{t=0}$ and $\frac{d}{dt}\gamma_1(t) |_{t=0}$ are perpendicular to $T_{(U,D)} \Ec_X$. Note that
\begin{align*}
 T_{(U,D)}\Ec_X = \{(UC, 0) \in \sym(p) \times \diag(p) : C \in \gf_D\}.
\end{align*}
Since left and right translations by $(U,D)$ and $(R, \Id)$ are  isometry of $(\SO \times \diagp)(p)$, we have
$
  (T_{(U,D)}\Ec_X)^{\perp} = U(\gf_D)^\perp  \oplus \diag(p),
$
  where
$ U(\gf_D)^\perp  = \{UW : W \in (\gf_D)^\perp  \subset T_\Id \SO(p) = \asym(p)  \}$.

Let $\tilde{A} = U'AU$, $\tilde{B} = U'BU$.
Since $\frac{d}{dt}\gamma_0(t) |_{t=0} = (U\tilde{A},DL)$ and $\frac{d}{dt}\gamma_1(t) |_{t=0} = (U\tilde{B},DL_1)$, it follows that $\tilde{A}, \tilde{B} \in (\gf_D)^\perp$ and thus $\tilde{B} - \tilde{A} \in (\gf_D)^\perp$.

Let $\chi_0 = c \circ\gamma_0$, $\chi_1 = c \circ \gamma_1$, and assume $\chi_0 = \chi_1$. By the necessary condition (i) in Theorem~\ref{thm:1alternative}, we have $\tilde{B} - \tilde{A} \in \gf_D$. Hence $\tilde{B} - \tilde{A} = \0v$, implying $B=A$ and $V_1 = V$. Then ``$\chi_0 = \chi_1$'' implies $L_1 = L$ as well, a contradiction. $\endproof$

\emph{Proof of Theorem~\ref{thm:minimaldistancep=2}}.
(i) By definition (\ref{eq:quotientdistance}), $d_{\Sc\Rc}(X,Y) = \min_{k,j} d((U_k,D_k),(V_j,\Lambda_j))$, for $k,j = 1,2,3,4$. Suppose, without loss of generality, $(V,\Lambda) = (V_1,\Lambda_1)$. For any choice of $j = 1,2,3,4$, there exist  $\pi \in S_p$ and $\sigma \in \boldsig_p^+$ such that
$(V_jI_\sigma P_\pi, (\Lambda_j)_{\pi}) = (V_1, \Lambda_1)$. Moreover, for any $k$, one can choose some $i$ so that
$(U_kI_\sigma P_\pi, (D_k)_{\pi}) = (U_i, D_i)$. Therefore, with a help of Proposition~\ref{prop:invariance_geoddist}, for any $k,j$, there exist $\pi, I_\sigma$, and $i$ satisfying
$$d((U_k,D_k),(V_j,\Lambda_j))
  = d((U_kI_\sigma P_\pi, (D_k)_{\pi}),(V_jI_\sigma P_\pi, (\Lambda_j)_{\pi}))
  = d( (U_i, D_i) , (V_1, \Lambda_1)  ).$$
  Thus it is enough to fix a version of $Y$ and compare the distances given by four versions of $X$.

(ii) It is clear from the proof of (i) and by Proposition~\ref{prop:invariance_geoddist} that we can fix a version of $Y$ first. Since the eigenvalues of $D$ are identical to, say, $d_1$, $(U,d_1 I_2)$ is a version of $X$ for any $U \in \SO(2)$. Thus choosing $U = V$ leads to the smallest distance between $U,V \in \SO(2)$.
$\endproof$

\emph{Proof of Lemma~\ref{lem:minimalrotation}}.
Note that a matrix $R$ that rotates the first two columns of $U$ when post-multiplied is
$R = \begin{bmatrix}
               R_{11} & 0 \\
               0 & 1 \\
      \end{bmatrix}$, $R_{11} \in \SO(2)$.
Using Lemma~\ref{lem:rotation3}(ii), we have
\begin{align*}
 \argmin_{R} &\ d((UR I_\sigma P_\pi', D_\pi),(V,\Lambda))  = \argmin_{R} \norm{ \log(UR  I_\sigma P_\pi' V')}_F \\
     & = \argmin_{R} \norm{ \log(UR  I_\sigma P_\pi' V')}_F
      = \argmax_{R} \tr{( UR  I_\sigma P_\pi' V')}\\
     & = \argmax_{R} \tr{(I_\sigma P_\pi' V' UR )}
      = \argmax_{R_{11}} \tr{(
           \begin{bmatrix}
                         \Gamma_{11} & \Gamma_{12} \\
                         \Gamma_{21} & \gamma_{22} \\
                       \end{bmatrix}
             \begin{bmatrix}
                                    R_{11} &  0 \\
                                   0 & 1 \\
                                  \end{bmatrix}  )} \\
      & = \argmax_{R_{11}} \tr (\Gamma_{11} R_{11}) + \gamma_{22}.
\end{align*}
Since $R_{11} \in SO(2)$, the singular values of $R_{11}$ are unity. The result is obtained by an application of the fact from \cite{Mirsky1975} that for any square matrices $A$ and $B$ with vectors of singular values $\sigma_A$ and $\sigma_B$ in non-increasing order,
$|\tr(A'B)| \le \sigma_A'\sigma_B$.
$\endproof$

\emph{Proof of Theorem~\ref{thm:minimaldistancep=3}.}
A proof of (i),(ii) and (iv) can be obtained by a simple extension of the proof of Theorem~\ref{thm:minimaldistancep=2}.
For (iii), note that all versions of $X$ and $Y$ are $(UR_\theta I_{\sigma_k}P_{\pi_l}', D_{\pi_l})$ and
$(VR_\phi I_{\sigma_a}P_{\pi_b}', \Lambda_{\pi_b})$. Following the lines of the proof of Theorem~\ref{thm:minimaldistancep=2}(i), by choosing $I_{\sigma_a} = P_{\pi_b} = I$, it is enough to compare $(UR_\theta I_{\sigma_i}P_{\pi_j}', D_{\pi_j})$ and
$(VR_\phi, \Lambda)$. Moreover, the presence of rotation matrix $R_\theta$ allows us to  restrict the choice of $I_\sigma$ and $\pi$ to only six pairs. For a fixed $(i,j)$, $(i = 1,2,3, j =1,2)$,
\begin{align}
\min_{\theta,\phi} d(( U R_{\theta} I_{\sigma_i}P_{\pi_j}', D_{\pi_j}, (VR_{\phi},\Lambda)  )
 & = \min_{\theta,\phi} \norm{ \log( U R_\theta I_{\sigma_i}P_{\pi_j}' R_\phi' V'  )  }_F  \nonumber \\
 & = \max_{\theta,\phi} \tr (U R_\theta I_{\sigma_i}P_{\pi_j}' R_\phi' V'), \label{eq:simulminimizer}
\end{align}
by Lemma~\ref{lem:rotation3}(ii). $\endproof$

%

%

\bibliographystyle{siam}
\bibliography{DTI}

\begin{thebibliography}{10}

\bibitem{Absil2009}
{\sc P-A Absil, Robert Mahony, and Rodolphe Sepulchre}, {\em Optimization
  algorithms on matrix manifolds}, Princeton University Press, 2009.

\bibitem{Alexander2005}
{\sc Daniel~C Alexander}, {\em Multiple-fiber reconstruction algorithms for
  diffusion mri}, Annals of the New York Academy of Sciences, 1064 (2005),
  pp.~113--133.

\bibitem{Arsigny2007}
{\sc Vincent Arsigny, Pierre Fillard, Xavier Pennec, and Nicholas Ayache}, {\em
  Geometric means in a novel vector space structure on symmetric
  positive-definite matrices}, SIAM J. Matrix Anal. Appl., 29 (2007),
  pp.~328--347 (electronic).

\bibitem{Basser1994}
{\sc Peter~J. Basser, James Mattiello, and Denis LeBihan}, {\em {MR}
  {D}iffusion {T}ensor spectroscopy and imaging}, Biophysical Journal, 66
  (1994), pp.~259--267.

\bibitem{Batchelor2005}
{\sc P.~G. Batchelor, M.~Moakher, D.~Atkinson, F.~Calamante, and A.~Connelly},
  {\em A rigorous framework for diffusion tensor calculus}, Magnetic Resonance
  in Medicine, 53 (2005), pp.~221--225.

\bibitem{Bonnabel2009}
{\sc Silv{\`e}re Bonnabel and Rodolphe Sepulchre}, {\em Riemannian metric and
  geometric mean for positive semidefinite matrices of fixed rank}, SIAM J.
  Matrix Anal. Appl., 31 (2009), pp.~1055--1070.

\bibitem{Carmichael2013}
{\sc Owen Carmichael, Jun Chen, Debashis Paul, and Jie Peng}, {\em Diffusion
  tensor smoothing through weighted {K}archer means}, Electron. J. Stat., 7
  (2013), pp.~1913--1956.

\bibitem{Chao2009681}
{\sc Tzu-Cheng Chao, Ming-Chung Chou, Pinchen Yang, Hsiao-Wen Chung, and
  Ming-Ting Wu}, {\em Effects of interpolation methods in spatial normalization
  of diffusion tensor imaging data on group comparison of fractional
  anisotropy}, Magnetic Resonance Imaging, 27 (2009), pp.~681 -- 690.

\bibitem{collard2012anisotropy}
{\sc Anne Collard, Silv{\`e}re Bonnabel, Christophe Phillips, and Rodolphe
  Sepulchre}, {\em Anisotropy preserving dti processing}, International Journal
  of Computer Vision, 107 (2014), pp.~58--74.

\bibitem{Dryden2009}
{\sc I.~L. Dryden, A.~Koloydenko, and D.~Zhou}, {\em Non-{E}uclidean statistics
  for covariance matrices, with applications to diffusion tensor imaging},
  Annals of Applied Statistics, 3 (2009), pp.~1102--1123.

\bibitem{Fletcher2007}
{\sc P.~Thomas Fletcher and Sarang Joshi}, {\em Riemannian geometry for the
  statistical analysis of diffusion tensor data}, Signal Processing, 87 (2007),
  pp.~250--262.

\bibitem{forni2000generalized}
{\sc Mario Forni, Marc Hallin, Marco Lippi, and Lucrezia Reichlin}, {\em The
  generalized dynamic-factor model: Identification and estimation}, Review of
  Economics and statistics, 82 (2000), pp.~540--554.

\bibitem{Gallier2011}
{\sc Jean~H Gallier}, {\em Geometric methods and applications: for computer
  science and engineering}, vol.~38, Springer, 2011.

\bibitem{Horn2012}
{\sc Roger~A Horn and Charles~R Johnson}, {\em Matrix analysis}, Cambridge
  university press, 2012.

\bibitem{Jung2014}
{\sc Sungkyu Jung and Xingye Qiao}, {\em A statistical approach to set
  classification by feature selection with applications to classification of
  histopathology images}, Biometrics, 70 (2014), pp.~536--545.

\bibitem{LeBihan2001}
{\sc Denis Le~Bihan, Jean-François Mangin, Cyril Poupon, Chris~A. Clark, Sabina
  Pappata, Nicolas Molko, and Hughes Chabriat}, {\em Diffusion tensor imaging:
  concepts and applications}, Journal of Magnetic Resonance Imaging, 13 (2001),
  pp.~534--46.

\bibitem{Lenglet2006}
{\sc Christophe Lenglet, Mika{\"e}l Rousson, and Rachid Deriche}, {\em Dti
  segmentation by statistical surface evolution}, Medical Imaging, IEEE
  Transactions on, 25 (2006), pp.~685--700.

\bibitem{Lepore2008}
{\sc F~Lepore, Caroline Brun, Yi-Yu Chou, Ming-Chang Chiang, Rebecca~A Dutton,
  Kiralee~M Hayashi, Eileen Luders, Oscar~L Lopez, Howard~J Aizenstein,
  Arthur~W Toga, et~al.}, {\em Generalized tensor-based morphometry of hiv/aids
  using multivariate statistics on deformation tensors}, Medical Imaging, IEEE
  Transactions on, 27 (2008), pp.~129--141.

\bibitem{Mirsky1975}
{\sc L.~Mirsky}, {\em A trace inequality of john von neumann}, Monatshefte für
  Mathematik, 79 (1975), pp.~303--306.

\bibitem{Moakher2002}
{\sc Maher Moakher}, {\em Means and averaging in the group of rotations}, SIAM
  J. Matrix Anal. Appl., 24 (2002), pp.~1--16 (electronic).

\bibitem{Moakher2005}
\leavevmode\vrule height 2pt depth -1.6pt width 23pt, {\em A differential
  geometric approach to the geometric mean of symmetric positive-definite
  matrices}, SIAM J. Matrix Anal. Appl., 26 (2005), pp.~735--747 (electronic).

\bibitem{Moakher2011}
{\sc Maher Moakher and Mourad Zerai}, {\em The riemannian geometry of the space
  of positive-definite matrices and its application to the regularization of
  positive-definite matrix-valued data}, Journal of Mathematical Imaging and
  Vision, 40 (2011), pp.~171--187.

\bibitem{Osborne2013}
{\sc Daniel Osborne, Vic Patrangenaru, Leif Ellingson, David Groisser, and
  Armin Schwartzman}, {\em Nonparametric two-sample tests on homogeneous
  {R}iemannian manifolds, {C}holesky decompositions and diffusion tensor image
  analysis}, J. Multivariate Anal., 119 (2013), pp.~163--175.

\bibitem{Pennec2006}
{\sc Xavier Pennec, Pierre Fillard, and Nicholas Ayache}, {\em {A Riemannian
  Framework for Tensor Computing}}, International Journal of Computer Vision,
  66 (2006), pp.~41--66.

\bibitem{Schwartzman2006}
{\sc Armin Schwartzman}, {\em Random ellipsoids and false discovery rates:
  statistics for diffusion tensor imaging data}, PhD thesis, Standford
  University, 2006.

\bibitem{Schwartzman2008}
{\sc Armin Schwartzman, Robert~F. Dougherty, and Jonathan~E. Taylor}, {\em
  False discovery rate analysis of brain diffusion direction maps}, Ann. Appl.
  Stat., 2 (2008), pp.~153--175.

\bibitem{Schwartzman2010}
\leavevmode\vrule height 2pt depth -1.6pt width 23pt, {\em Group comparison of
  eigenvalues and eigenvectors of diffusion tensors}, J. Amer. Statist. Assoc.,
  105 (2010), pp.~588--599.

\bibitem{Schwartzman2008a}
{\sc Armin Schwartzman, Walter~F. Mascarenhas, and Jonathan~E. Taylor}, {\em
  Inference for eigenvalues and eigenvectors of {G}aussian symmetric matrices},
  Ann. Statist., 36 (2008), pp.~2886--2919.

\bibitem{Small1996}
{\sc Christopher~G. Small}, {\em The statistical theory of shape}, Springer
  Series in Statistics, Springer-Verlag, New York, 1996.

\bibitem{vemulapalli2015riemannian}
{\sc Raviteja Vemulapalli and David~W Jacobs}, {\em Riemannian metric learning
  for symmetric positive definite matrices}, arXiv preprint arXiv:1501.02393,
  (2015).

\bibitem{Wang2004}
{\sc Zhizhou Wang, Baba~C Vemuri, Yunmei Chen, and Thomas~H Mareci}, {\em A
  constrained variational principle for direct estimation and smoothing of the
  diffusion tensor field from complex dwi}, Medical Imaging, IEEE Transactions
  on, 23 (2004), pp.~930--939.

\bibitem{yang2012feature}
{\sc Feng Yang, Yue-Min Zhu, Isabelle~E Magnin, Jian-Hua Luo, Pierre Croisille,
  and Peter~B Kingsley}, {\em Feature-based interpolation of diffusion tensor
  fields and application to human cardiac dt-mri}, Medical image analysis, 16
  (2012), pp.~459--481.

\bibitem{Yuan2012}
{\sc Ying Yuan, Hongtu Zhu, Weili Lin, and J.~S. Marron}, {\em Local polynomial
  regression for symmetric positive definite matrices}, J. R. Stat. Soc. Ser.
  B. Stat. Methodol., 74 (2012), pp.~697--719.

\bibitem{Zhu2009}
{\sc Hongtu Zhu, Yasheng Chen, Joseph~G. Ibrahim, Yimei Li, Colin Hall, and
  Weili Lin}, {\em Intrinsic regression models for positive-definite matrices
  with applications to diffusion tensor imaging}, Journal of the American
  Statistical Association, 104 (2009), pp.~1203--1212.

\end{thebibliography}

\newpage
\begin{center}
\textbf{\large Supplemental Materials: Scaling-rotation distance and interpolation of symmetric positive-definite matrices}
\end{center}
\setcounter{equation}{0}
\setcounter{figure}{0}
\setcounter{table}{0}
\setcounter{page}{1}
\makeatletter
\renewcommand{\theequation}{S\arabic{equation}}
\renewcommand{\thefigure}{S\arabic{figure}}
\section{Examples of SPD matrix interpolations}

In connection with Section 5 of the main article, we provide visual examples of the scaling--rotation interpolation $f_{SR}$ between $3 \times 3 $ SPD matrices, compared with other interpolations:   Euclidean ($f_E$), log-Euclidean ($f_{LE}$), affine-invariant Riemannian ($f_{AI}$) and the interpolation used in \cite{collard2012anisotropy},  which we will refer to as the \emph{scaling--quaternion} interpolation ($f_{SQ}$).
\cite{collard2012anisotropy} considered only the set $$\symp_*(3) = \{(U,D) \in (\SO \times \diag^+) (3):  d_1 > d_2> d_3, D = \mbox{diag}(d_1,d_2,d_3)\}.$$

The purpose of these additional examples is to illustrate the advantageous regular evolution of rotation angle, determinant, fractional anisotropy and trace along the interpolation path.

We show  interpolations from $X$ to $Y$, for five different pairs $(X,Y)$.
\begin{enumerate}
\item Pure rotation in $f_{SR}$. Fig.~\ref{1}.
\item Pure scaling in $f_{SR}$. Fig.~\ref{2}.
\item A moderate mix of rotation and scaling. Fig.~\ref{3}.
\item A moderate mix of rotation and scaling, but with $\mbox{trace}(X) = \mbox{trace}(Y)$ (so, $\mbox{MD}(X) = \mbox{MD}(Y)$). Fig.~\ref{4}.
\item Departure from isotropy. Fig.~\ref{5}.
\end{enumerate}

In all figures, the top five rows show various interpolations of the given $ 3\times 3$ SPD matrices: Row 1--$f_{SR}$, row 2--$f_E$, row 3--$f_{LE}$, row 4--$f_{AI}$ and row 5--$f_{SQ}$. The ellipsoids are colored by the direction of the first principal axis (red: left--right, green: up--down, blue: in--out), which varies smoothly with $t$ for $f_{SR}$ and $f_{SQ}$.
For the other interpolations in rows 2--4, the first principal axis always corresponds to the largest eigenvalue, and may not vary smoothly (\emph{cf}. Fig.~2).
The bottom panel shows the evolution of rotation angle, determinant, fractional anisotropy (FA) and mean diffusivity (MD) for the five modes of interpolations. MD measures overall diffusion intensity in a tensor $X = V\Lambda V'$ as the average of the eigenvalues  $\mbox{MD}(X) = \bar{\lambda} = \tr (\Lambda) / 3  = (\lambda_1 + \lambda_2 + \lambda_3)/3$. FA measures a degree of anisotropy of the tensor $X$, and is defined by
$$\mbox{FA}(X) = \sqrt{\frac{3}{2}} \frac{\sqrt{(\lambda_1 - \bar{\lambda})^2 + (\lambda_2 - \bar{\lambda})^2 + (\lambda_3 - \bar{\lambda})^2}}{\sqrt{\lambda_1^2 + \lambda_2^2 + \lambda_3^2}}.$$
The rotation angle $\theta_t$ at time $t$ is measured by $\theta_t = \cos^{-1}\left(\frac{1}{2}(\mbox{trace}(U(t) U(0)') - 1)\right)$ for $f_{SR}$ and $f_{SQ}$ (where the eigenvector frame $U(t)$ is given explicitly), and by $\theta_t = \cos^{-1}(| u_1(t)'u_1(0)| )$, where $u_1(t)$ is the eigenvector of $f_E(t)$ (or $f_{LE}(t), f_{AI}(t)$) corresponding to the largest eigenvalue, for $f_E(t) $ (or $f_{LE}(t), f_{AI}(t)$, respectively).
In all the examples in this section, we have chosen the scaling factor $k = 1$ in the scaling--rotation distance and interpolation  (see eq. (3.4) in the main article);  in Section 2, we illustrate the effect of changing $k$.
The coordinate axes in the examples are chosen so that the axis of rotation  $\av = (-0.5272,    -0.6871,     0.5)'$ is normal to the screen.

  \textbf{Case 1}: Pure rotation.
  \begin{align*}
  X &= \mbox{diag}(15,5,1), \\
   Y &= R(\frac{\pi}{3}\av)  \mbox{diag}(15,5,1)    R(\frac{\pi}{3} \av)',
  \end{align*}
where $R(\theta\av)$ is the $3\times 3$ rotation matrix with axis $\av$ and rotation angle $\theta$ (in radians); see Appendix Lemma A.1 (i).

\begin{figure}[tb!]
 \centering
  \includegraphics[width=.8\textwidth]{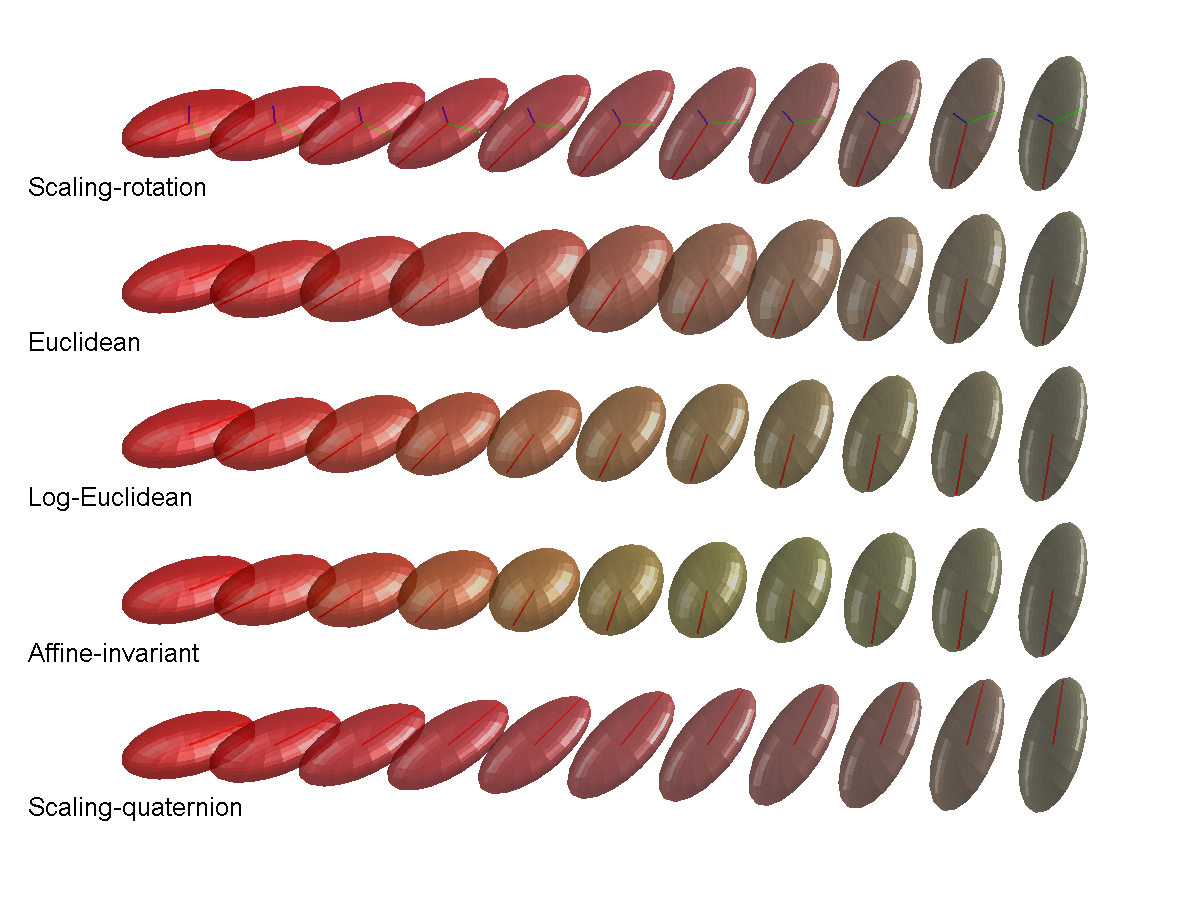}\\
  \vskip -0.2in
  \includegraphics[width=.9\textwidth]{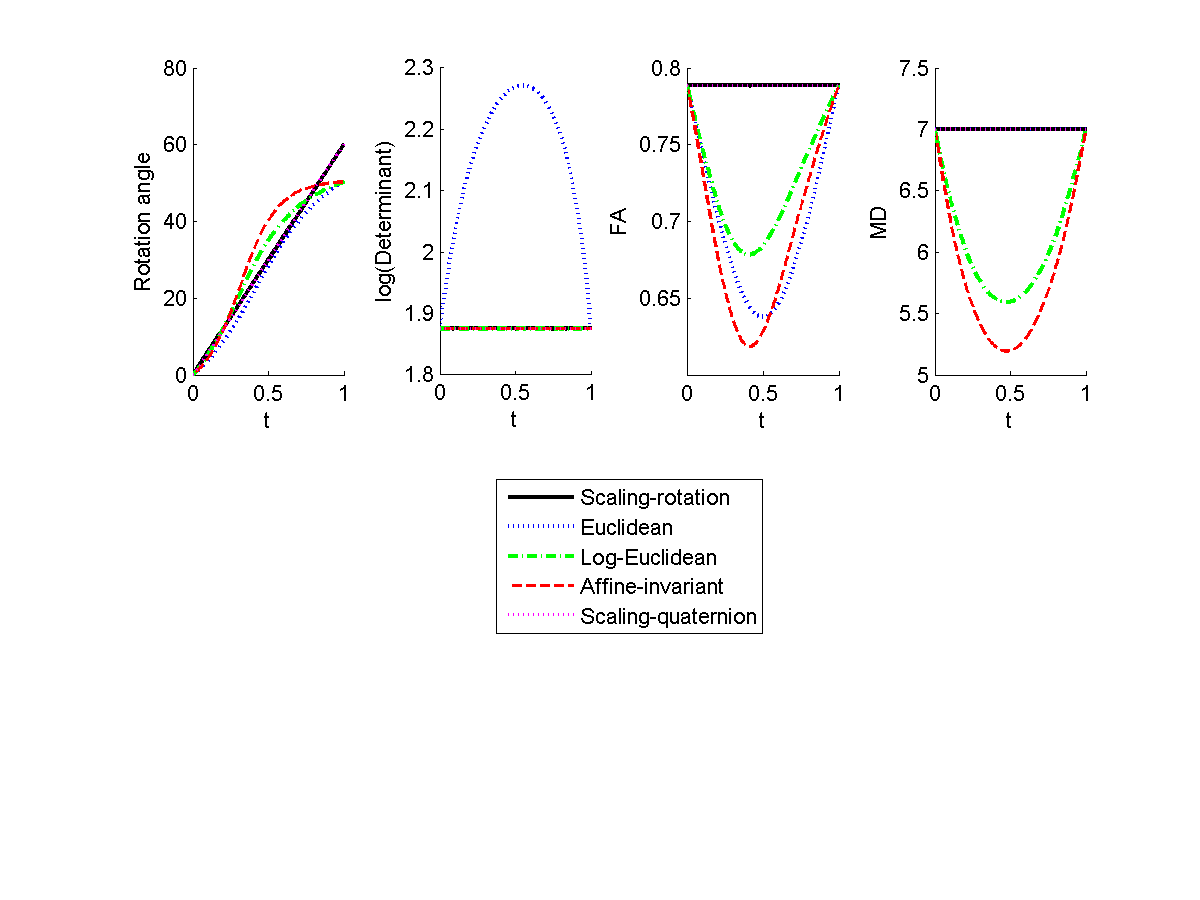}
  \vskip -1.0in
  \caption{Case 1. Pure rotation in $f_{SR}$.
    \label{1} }
\end{figure}

 The scaling--rotation interpolation in the top row of Fig.~\ref{1} provides a \emph{constant} pattern: constant rotational speed, determinant, fractional anisotropy, and mean diffusivity.

  \textbf{Case 2}: Pure scaling.
  \begin{align*}
  X &= \mbox{diag}(15,5,1),  \\
  Y &= \mbox{diag}(7,12,8).
  \end{align*}

\begin{figure}[tb]
 \centering
  \vskip -0.1in
  \includegraphics[width=.8\textwidth]{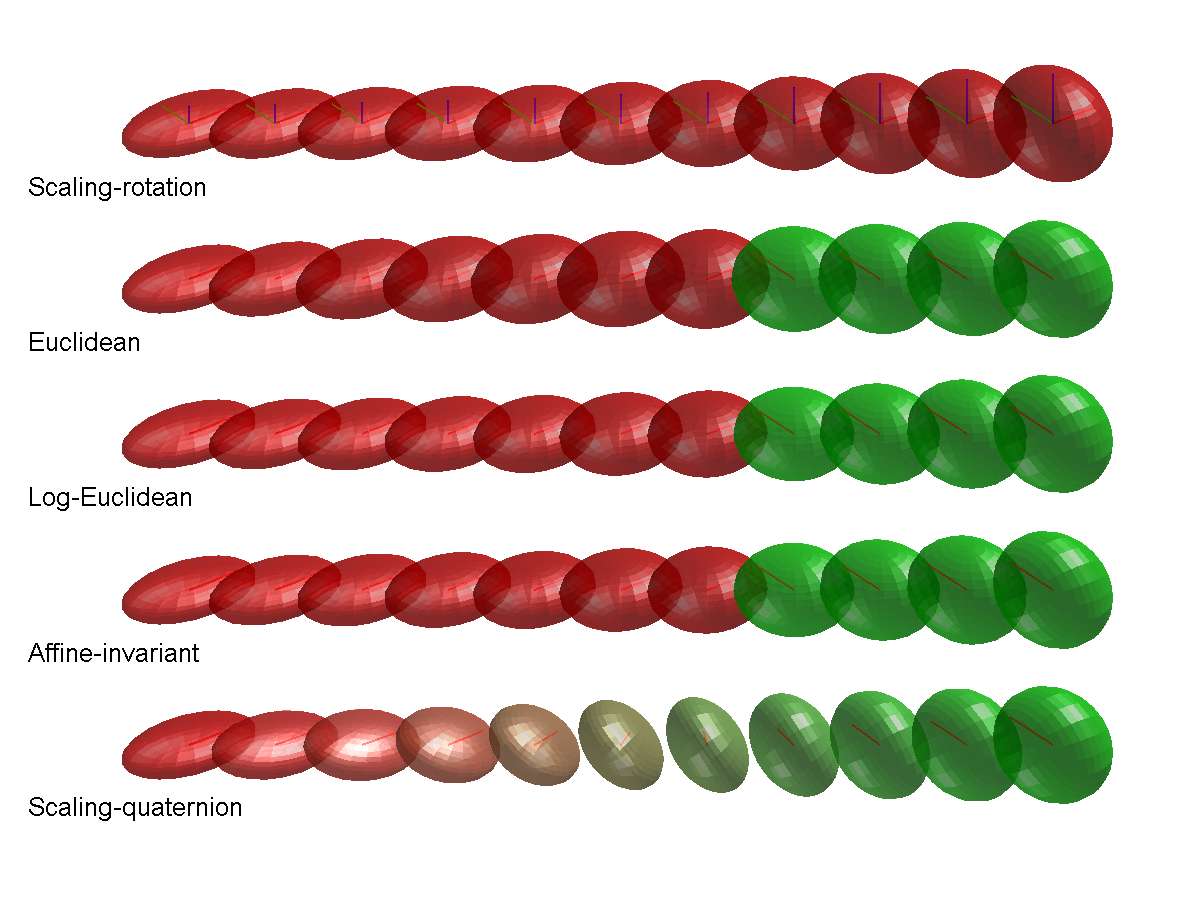}\\
  \vskip -0.2in
  \includegraphics[width=.9\textwidth]{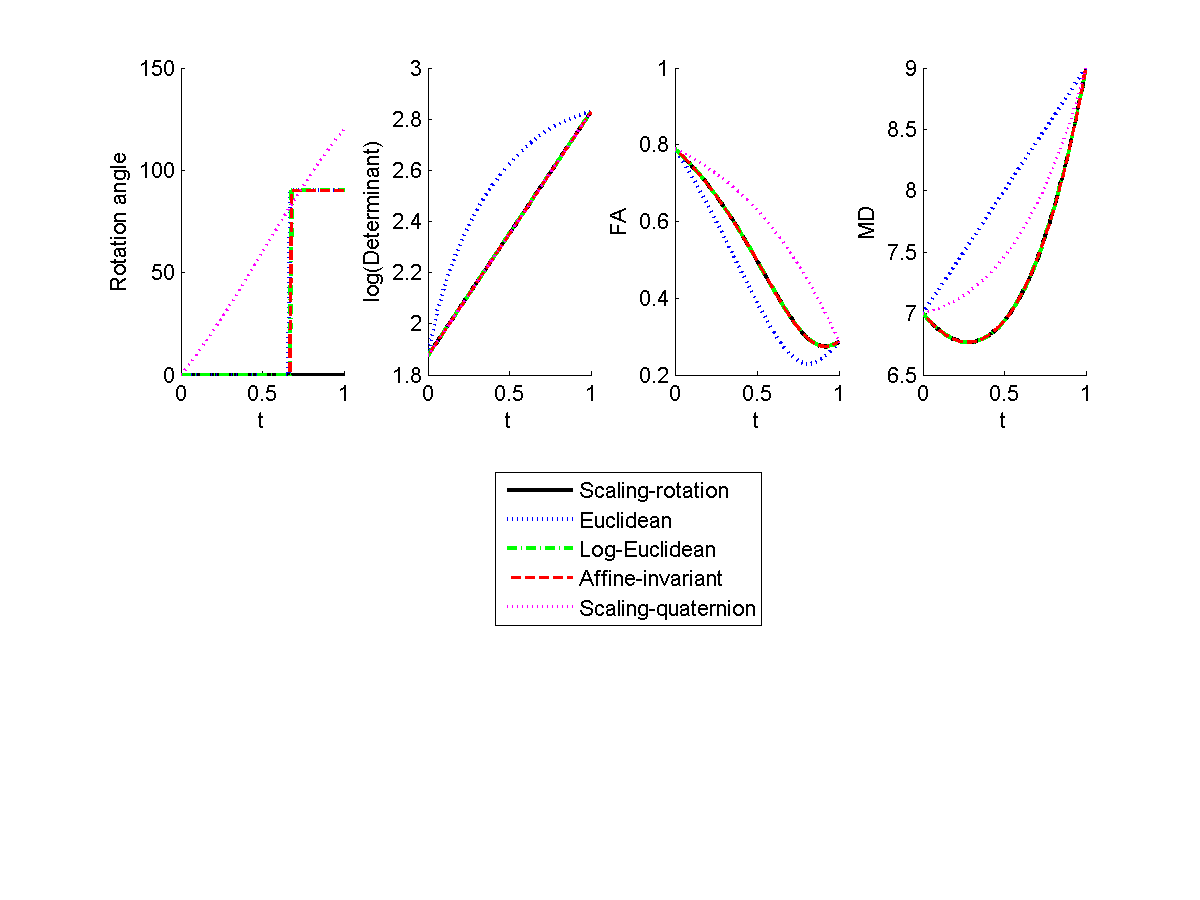}
  \vskip -1.1in
   \caption{Case 2. Pure scaling in $f_{SR}$.
  \label{2}}
\end{figure}
  It is evident in Fig. 2 that $f_{SR} \equiv f_{LE} \equiv f_{AI}$. The colors of $f_{LE}$ and $f_{AI}$ are sharply changed from red to green, because the change of the principal axis corresponding to the largest eigenvalue is not smooth. On the other hand, the principal axes of $f_{SR}$ do not change (since the interpolation is by pure scaling). Since  $f_{SR} \equiv f_{LE} \equiv f_{AI}$, the interpolation by $f_{SR}$ is only as good as the interpolations by $f_{LE}$. This is an example where these three interpolations all suffer from \emph{fattening and shrinking effects}.
  On the other hand, in this example it is \emph{not possible} for $f_{SQ}$ to interpolate only by scaling (because if so, the path would leave $\symp_*(3)$). The interpolation by $f_{SQ}$ thus involves the rotation of axes by $\pi/2$. While in this example $f_{SQ}$ is the only interpolation that does not suffer from the fattening and shrinking effects, for small enough scaling factor $k$ the scaling--rotation curve $f_{SR}$ for this pair $(X,Y)$ involves rotation, and does not exhibit fattening or shrinking.
  We elaborate on the choice of $k$ later in Section 2 of this online supplement.

  \textbf{Case 3}: A moderate mix of rotation and scaling.
  \begin{align*}
  X &= \mbox{diag}(15,5,1), \\
  Y &= R(\frac{\pi}{3}\av)  \mbox{diag}(9,12,8)    R(\frac{\pi}{3}\av)'.
  \end{align*}
\begin{figure}[tb]
 \centering
  \includegraphics[width=0.8\textwidth]{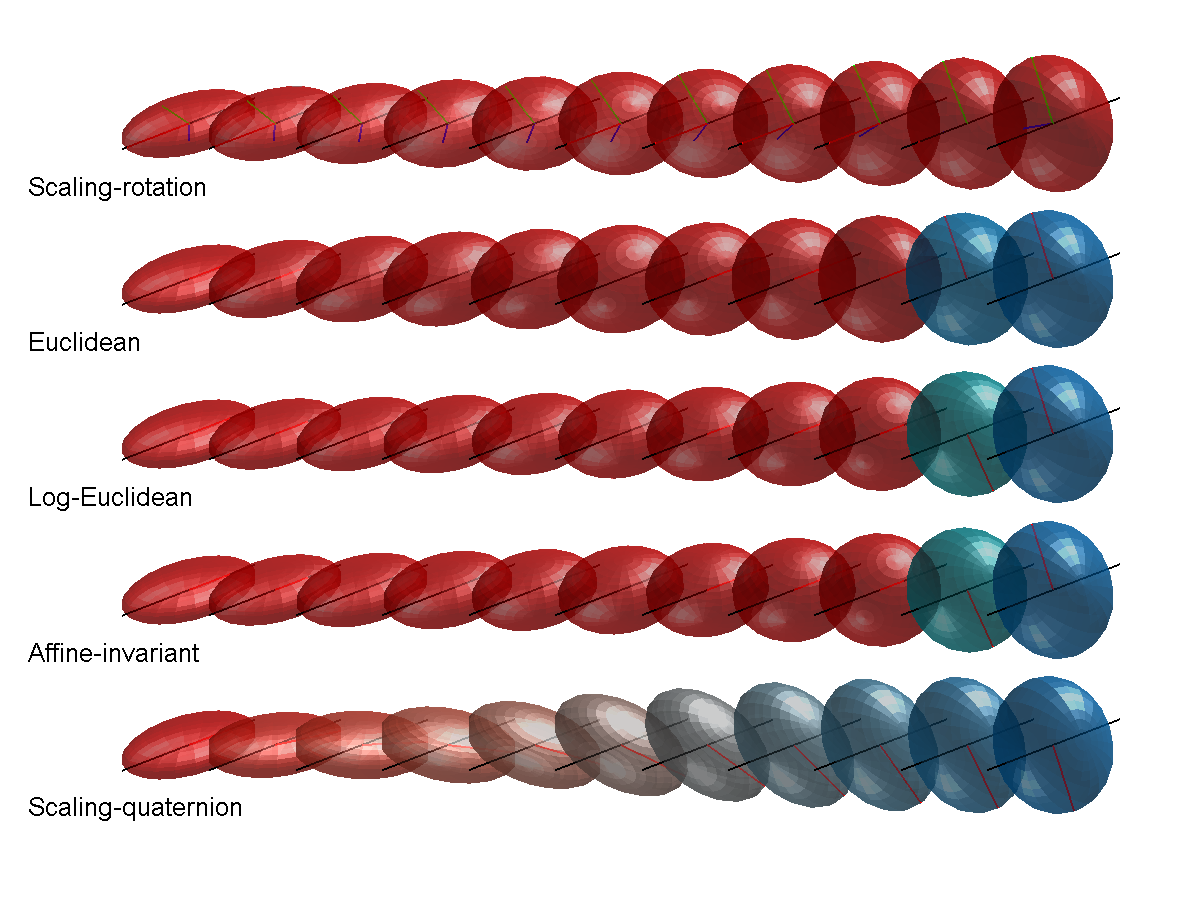}\\
  \includegraphics[width=0.9\textwidth]{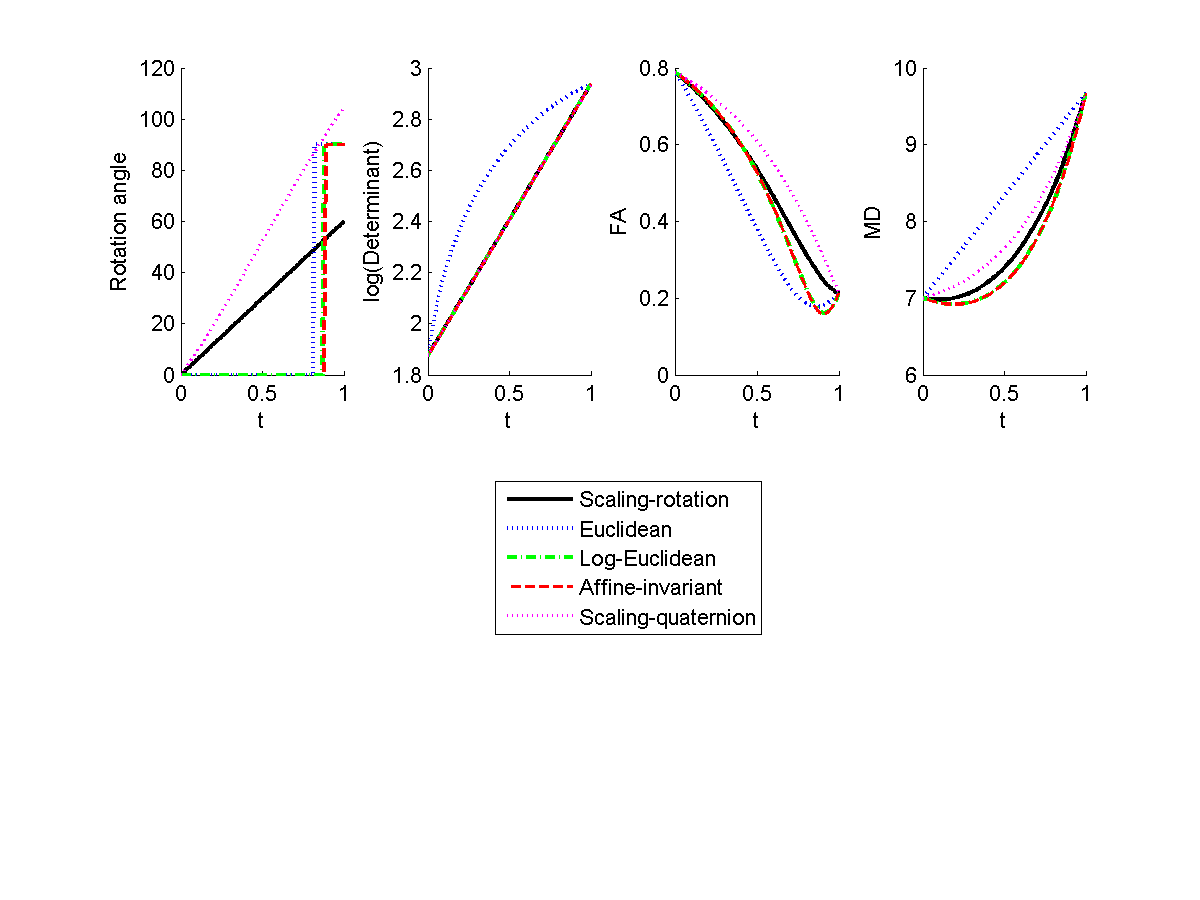}
  \vskip -1in
  \caption{Case 3. A moderate mix of rotation and scaling in $f_{SR}$.
  \label{3} }
\end{figure}

  The scaling--rotation interpolation in Fig.~\ref{3} provides a \emph{monotone} pattern for rotation angle, determinant, FA and MD.

\newpage
  \textbf{Case 4}: A moderate mix of rotation and scaling, but with $\mbox{MD}(X) = \mbox{MD}(Y)$.
  \begin{align*}
  X &= R(\frac{\pi}{6}\av)\mbox{diag}(1,15,4)R(\frac{\pi}{6}\av)',\\
  Y &= R(-\frac{\pi}{6}\av)  \mbox{diag}(2,10,8)    R(-\frac{\pi}{6}\av)'.
  \end{align*}
  \begin{figure}[tb]
   \centering
    \includegraphics[width=0.8\textwidth]{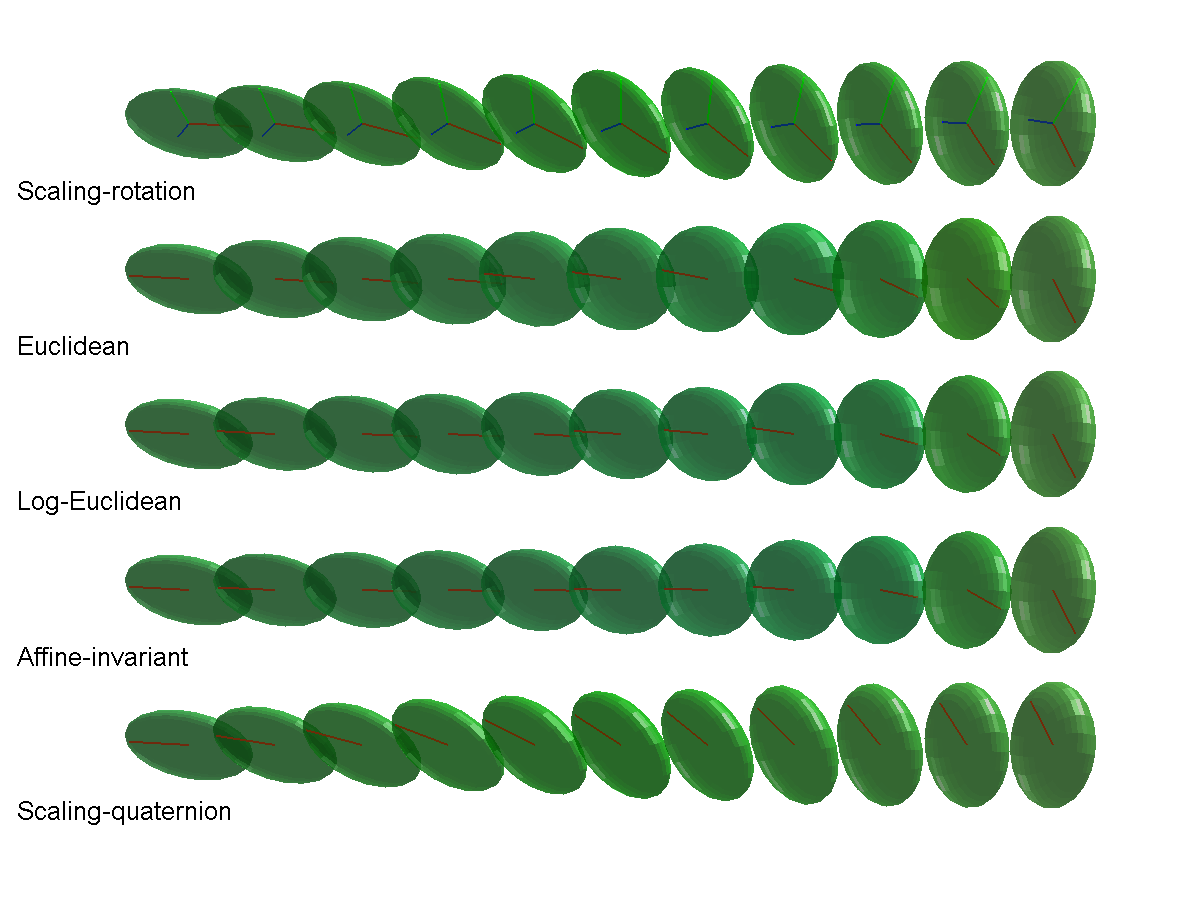}\\
    \includegraphics[width=0.9\textwidth]{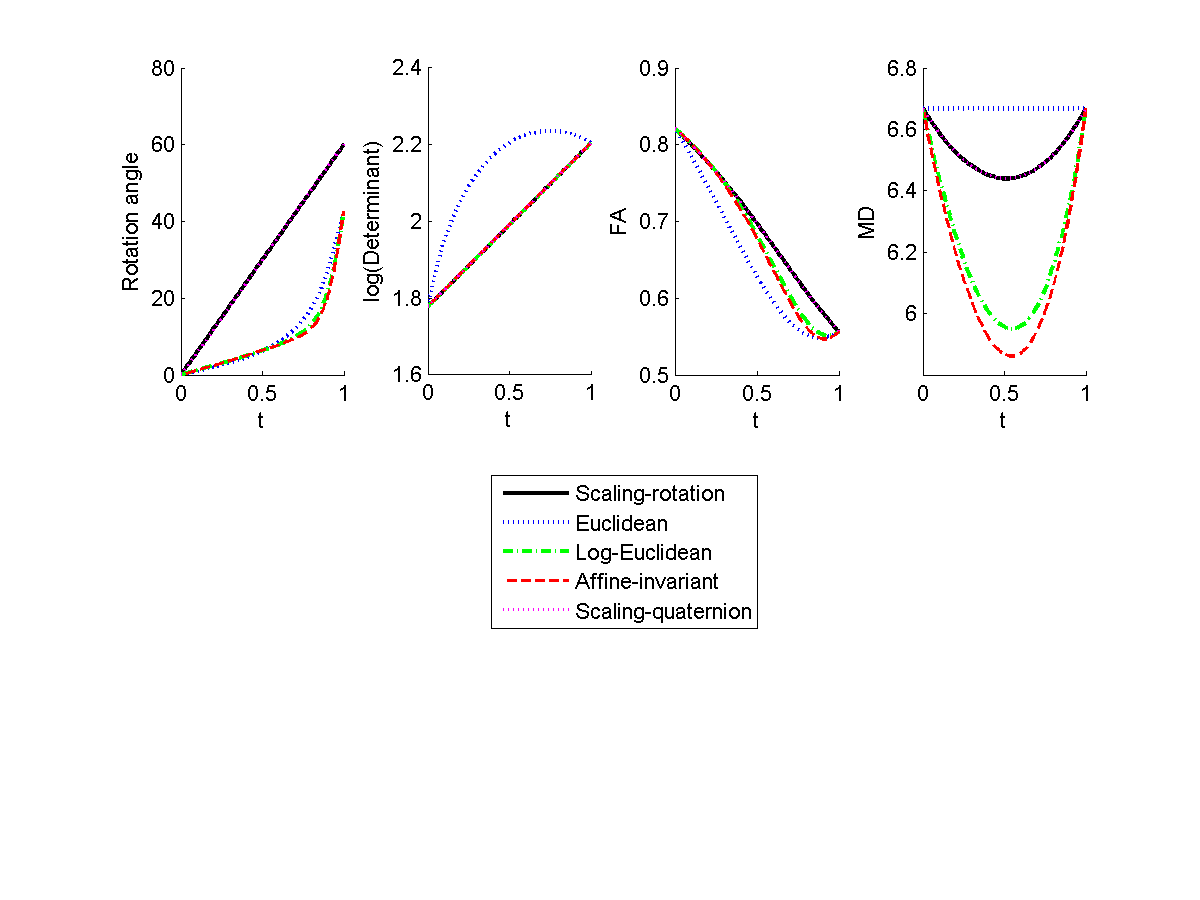}
    \vskip -1in
    \caption{Case 4. A moderate mix of rotation and scaling.
    \label{4}}
  \end{figure}
In Fig. \ref{4}, $f_{SR}$ does not exhibit monotone evolution of MD, but the \emph{shrinking effect} is  less  than in both $f_{LE}$ and $f_{AI}$, \textit{i.e.} MD($f_{SR}(t)$) is greater than both MD($f_{LE}(t)$) and MD($f_{AI}(t)$) for all $t \in (0,1)$.

\newpage
  \textbf{Case 5}: Departure from isotropy.
  \begin{align*}
  X &= \mbox{diag}(4,4,4) = 4 I_3,\\
  Y &= R(\frac{\pi}{3}\av)  \mbox{diag}(11,11,6)    R(\frac{\pi}{3}\av)'.
  \end{align*}

  \begin{figure}[tb]
   \centering
    \includegraphics[width=0.8\textwidth]{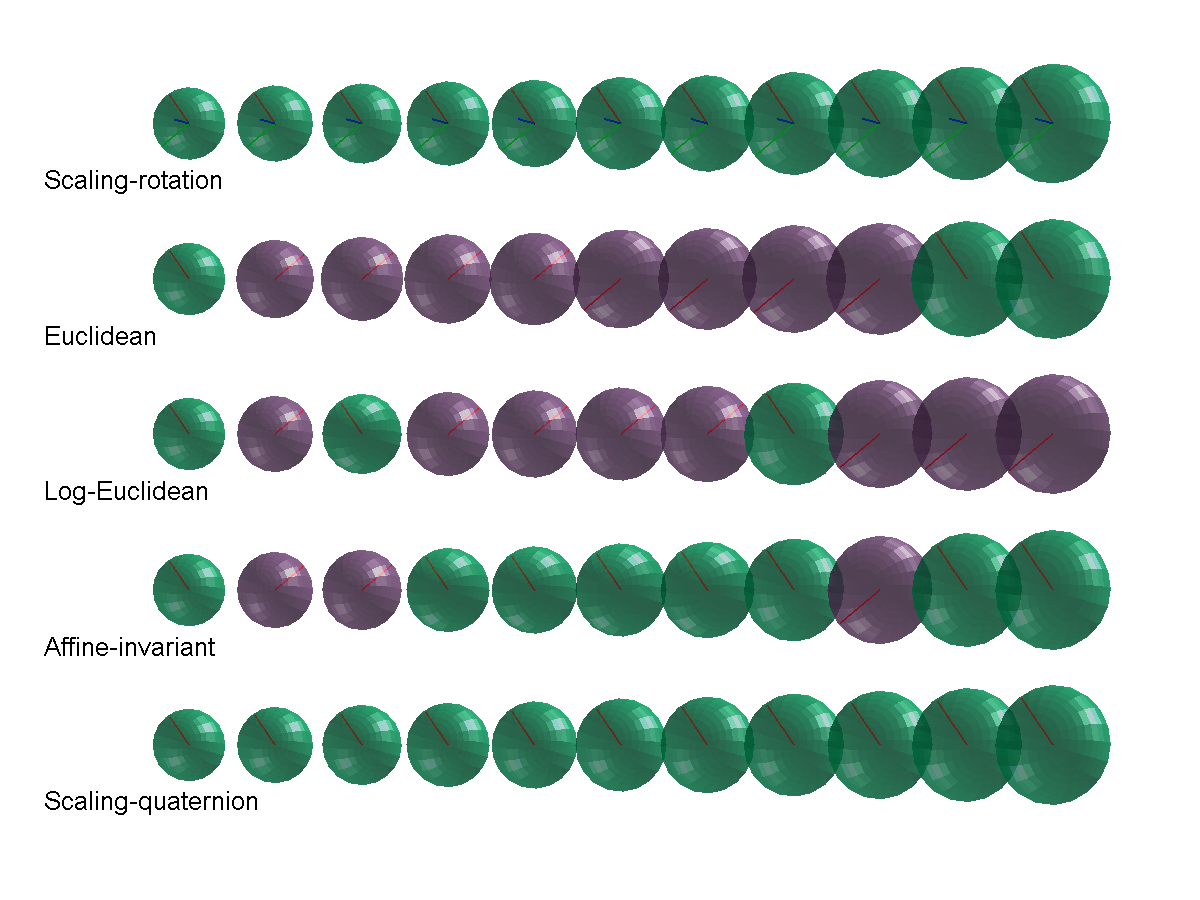}\\
    \includegraphics[width=0.9\textwidth]{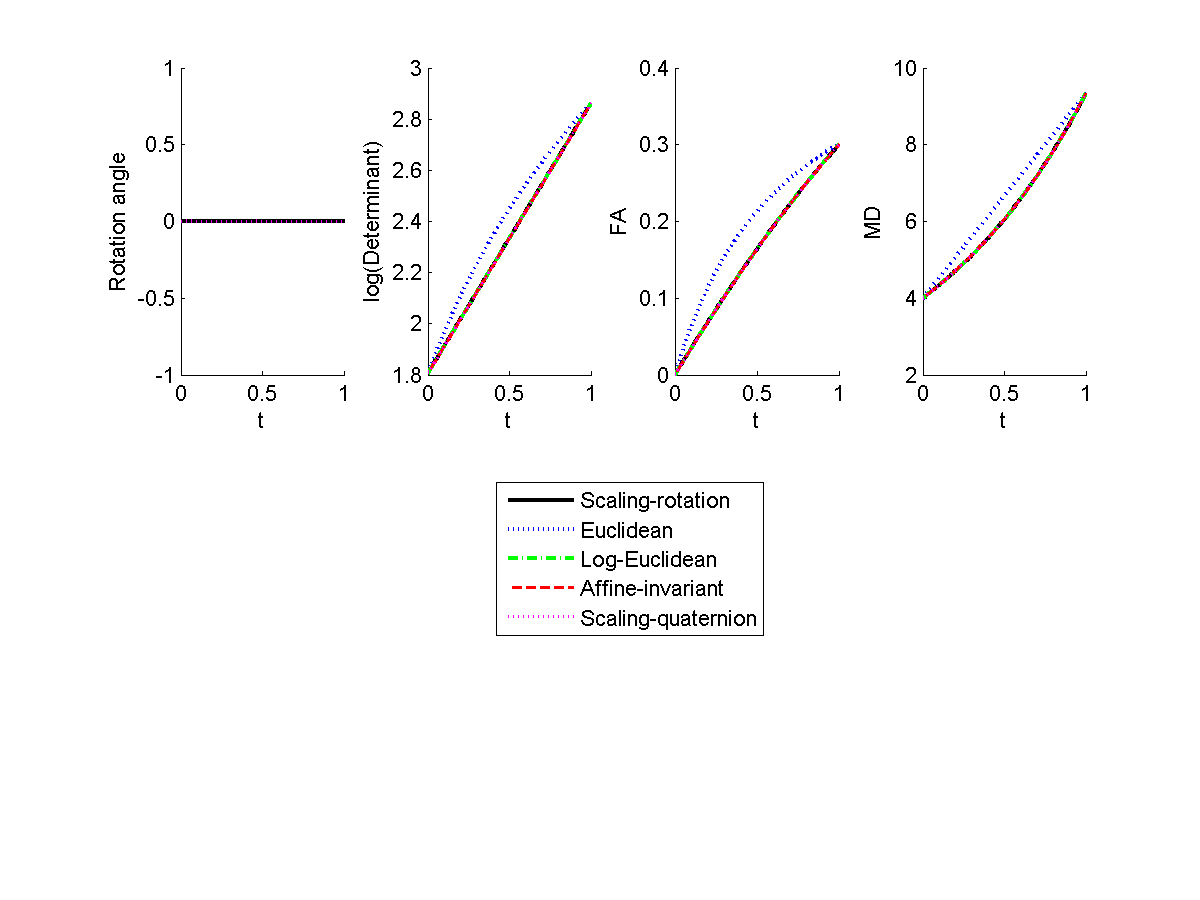}
    \vskip -1in
    \caption{Case 5:  Departure from  isotropy.
    \label{5}}
  \end{figure}

  The scaling--rotation interpolation $f_{SR}$ is of a pure scaling and thus $f_{SR} \equiv f_{LE} \equiv f_{AI}$ in this example. In the bottom left panel, we omit the rotation angles of $f_E,f_{LE}$ and $f_{AI}$, since the principal axes are not uniquely defined.
  The color-flips in $f_E,f_{LE}$ and $f_{AI}$ (shown in rows 2--4) are an artifact of our having made arbitrary choices of the principal axes.
The scaling--quaternion  interpolation ($f_{SQ}$) for equal-eigenvalue cases was not defined in \cite{collard2012anisotropy}. For this example, we chose the eigenvector matrix of $X$ to be the same as any given eigenvector matrix of $Y$, and defined $f_{SQ}(t) = (4 I_3)^{1-t} \Lambda^t$, where $\Lambda = \mbox{diag}(11,11,6)$.

\newpage
\section{Effect of the scaling factor $k$}
In connection with Section~5.2 of the main article, different choices of the scaling factor $k$ in the geodesic distance function

\renewcommand{\theequation}{\arabic{section}.\arabic{equation}}
\setcounter{section}{3}
\setcounter{equation}{3}
\setcounter{figure}{5}

\begin{align}
d^2\left((U,D),(V,\Lambda)  \right)
  =  k d_{{\rm SO}(p)}(U,V)^2 + d_{\Dc}(D_, \Lambda)^2, \quad k>0, \label{eq34}
\end{align}
 lead to different scaling--rotation distances $d_{\Sc\Rc} (X,Y)$ and different scaling--rotation interpolation between $X,Y \in \symp(p)$. Large values of $k$ tend to prevent the interpolation from including rotation. For smaller values of $k$, the interpolation tends to include rotation.

 We illustrate the effect of $k$ on the interpolations and their determinant, FA and MD. Consider interpolating between $X = \mbox{diag}(15,5,1)$ and $Y =  \mbox{diag}(7,12,8)$  (the same matrices used in Case 2 and Fig.~\ref{2} of Section 1 of this supplementary material).
 \begin{figure}[tb]
 \centering
  \includegraphics[width=0.8\textwidth]{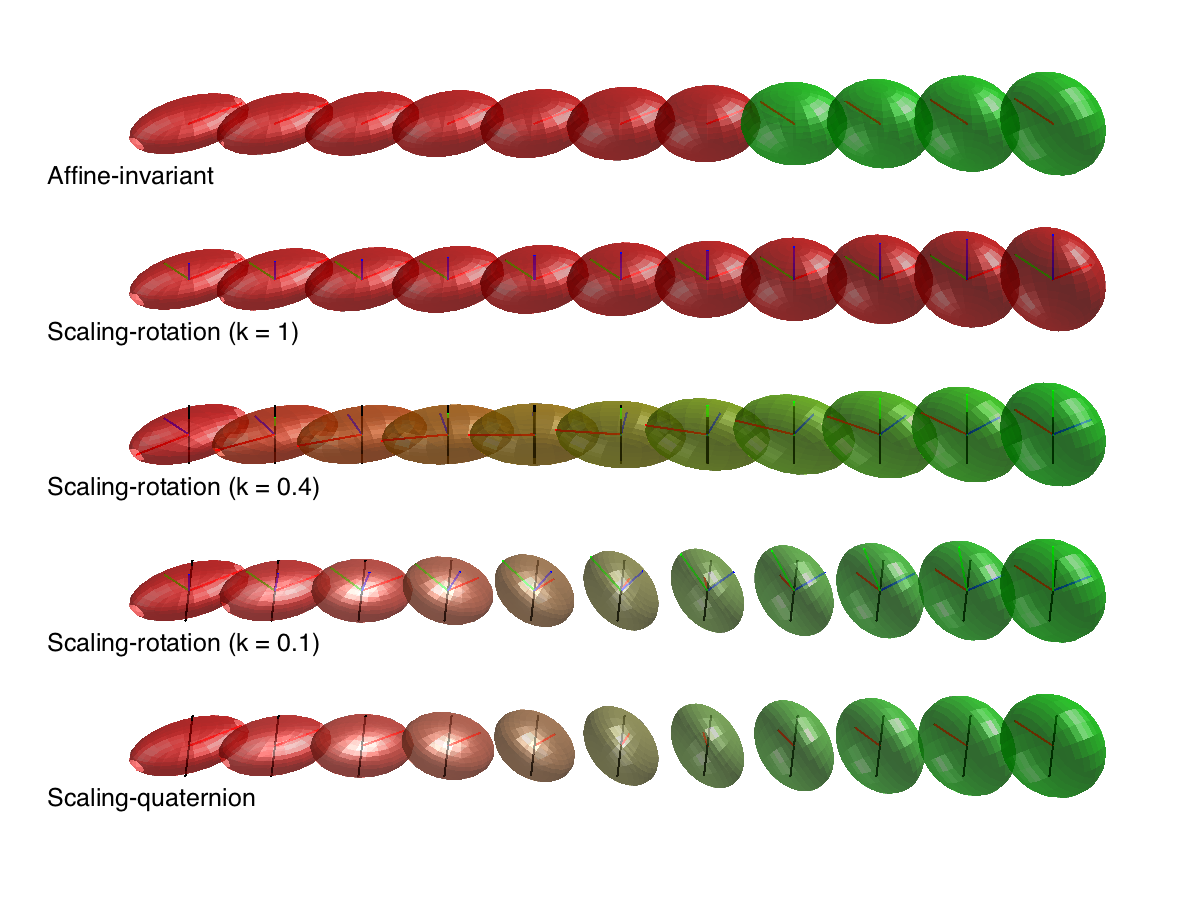}\\
  \vskip -0.2in
  \includegraphics[width=0.9\textwidth]{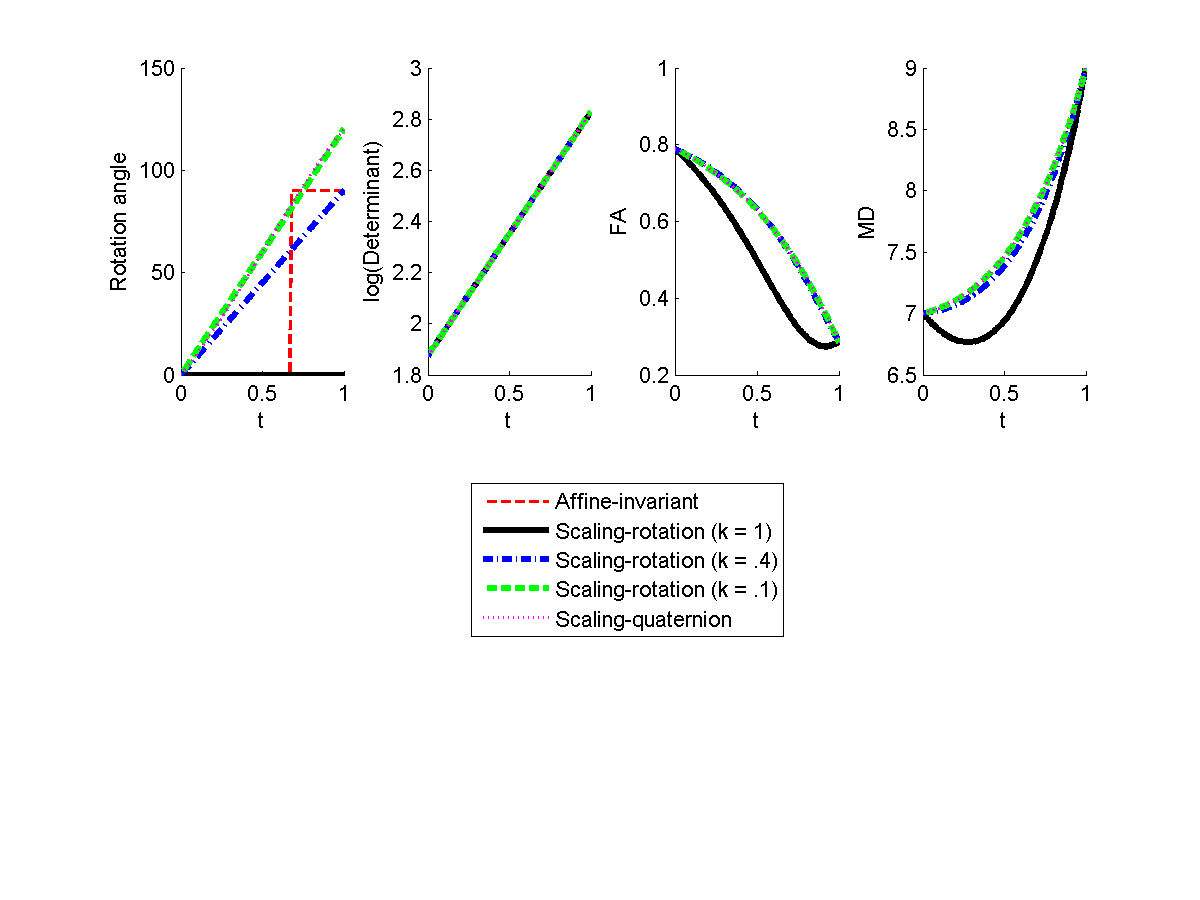}
  \vskip -1.0in
  \caption{Examples of  minimal scaling--rotation curves with the same endpoints but for different choices of $k$. For endpoints we use the same matrices used in Fig. 2 earlier in this supplement. Here, the log-Euclidean and affine-invariant interpolations are identical to each other.
  In this example they also coincide with the $k=1$ scaling-rotation curve, although this is not true in general.  The scaling-quaternion interpolation is by \cite{collard2012anisotropy}.
  \label{9}}
\end{figure}
 The scaling--rotation distance $d_{\Sc\Rc} (X,Y)$ is an increasing and continuous function of $k>0$, with discontinuities in the derivative appearing at approximately $k = 0.22$ and $k = 0.46$.
 The corresponding minimal scaling--rotation curves $\chi_{(k)}$ are different for different intervals to which the value of $k$ belongs. These are $(0,0.22)$, $(0.22,0.46)$, and $(0.46, \infty)$ approximately; in Fig.~\ref{9}, we chose $k = 1, 0.4, 0.1$ as representatives of these three intervals.
 For large $k$ ($k=1$ in Fig.~\ref{9}), the scaling--rotation curve $\chi_{(k)}$  involves only scaling and coincides with affine-invariant and log-Euclidean interpolations (see rows 1 and 2 of Fig.~\ref{9}). For smaller values of $k$ ($k = 0.4$ or $k = 0.1$ in Fig.~\ref{9}), the scaling--rotation curve $\chi_{(k)}$ is a mix of rotation and scaling. In this example, to the naked eye $\chi_{ (0.1)}$ appears virtually identical to  the scaling-quaternion interpolation, as shown in rows 4 and 5 of Fig.~\ref{9}. (However, the two interpolations are only approximately the same; they are not identical.)

\setcounter{section}{4}
\section{Advantageous use of {unordered} eigenvalues}
In connection with Section 5.2 of the main article, we provide an example where allowing unordered eigenvalues results in more desirable interpolations between SPD matrices. The method proposed by \cite{collard2012anisotropy} also measures  distance between SPD matrices by decomposing into rotations and scalings, but deals only with the case of distinct-and-ordered eigenvalues.

We compare distances and interpolations obtained by our scaling--rotation method and by the method of  \cite{collard2012anisotropy} between two SPD matrices whose corresponding ellipsoids are nearly oblate (\emph{i.e}. for each ellipsoid, the two longest principal axes are nearly equal).
The two SPD matrices we interpolate between are $(X_\epsilon,Y_\epsilon)$, where
  \begin{align}
  X_\epsilon &= \mbox{diag}(10+\epsilon,10-\epsilon,1), \nonumber \\
  Y_\epsilon &= R(\epsilon\frac{\pi}{4}  \ev_1)  \mbox{diag}(10-\epsilon,10+\epsilon,1) R(\epsilon\frac{\pi}{4}\ev_1)'  \label{eq1}
  \end{align}
for $\ev_1 =(1,0,0)'$ and $\epsilon \ge 0$.
For $\epsilon = 0$, $X_\epsilon = Y_\epsilon$. For small but non-zero $\epsilon$, the scaling--rotation distance and interpolation are computed from the unordered set of eigenvalues in (\ref{eq1}), with minimal amount of rotation ($\epsilon\frac{\pi}{4}$ in radians) and scalings $\log(\frac{ 10+\epsilon}{10-\epsilon})$. This leads to a  continuous transition to zero distance as $\epsilon\to 0$. See Fig.~\ref{fig:Scaling_Rotation_Paper_Fig2_supplmentR_Collard0} for a graph of $d_{\Sc\Rc}(X_\epsilon,Y_\epsilon)$.

In contrast,  assuming strictly \emph{ordered} eigenvalues results in more rotation than in the unordered case. In particular, by forcing the eigenvalues to be ordered, $X_\epsilon$ is compared with
$$
Y_\epsilon=R(\epsilon\frac{\pi}{4}  \ev_1) R(\frac{\pi}{2}  \ev_3)   X_\epsilon R(\frac{\pi}{2}  \ev_3)'  R(\epsilon\frac{\pi}{4}  \ev_1)' , \quad \ev_3 = (0,0,1)',
           $$
resulting in the excessive rotation by $\pi/2$ even when $X_\epsilon$ is arbitrarily close to $Y_\epsilon$. This phenomenon is illustrated  in Fig.~\ref{fig:Scaling_Rotation_Paper_Fig2_supplmentR_Collard0}.

\setcounter{figure}{6}
\begin{figure}[tb]
 \centering
  \includegraphics[width=0.7\textwidth]{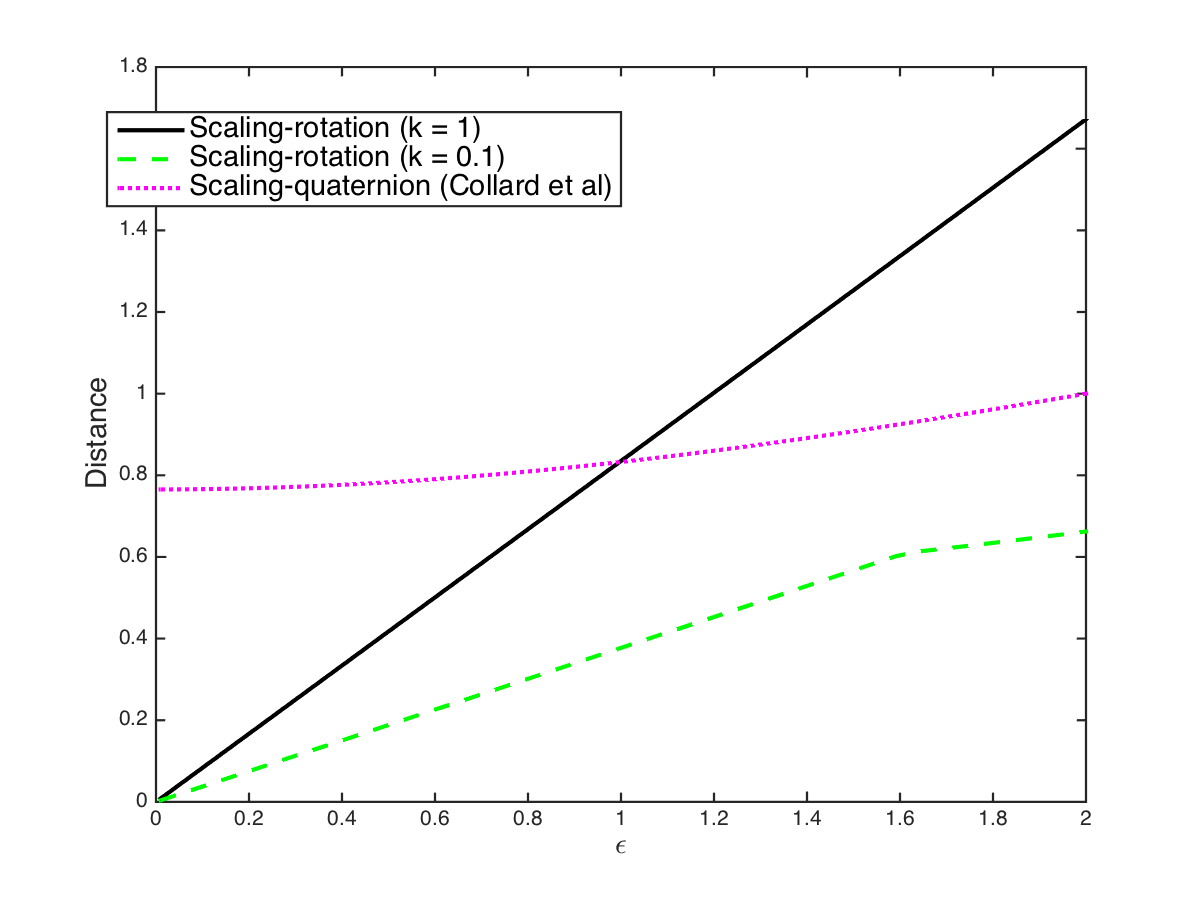}\\
   \caption{ Distances between $X_\epsilon$ and $Y_\epsilon$, for $0< \epsilon  < 2$; see (\ref{eq1}). The scaling--rotation distance (for both choices of $k = 1, 0.1$) is continuous on $\symp(3)$, while for the distance $d_C$ on $\symp_*(3)$ proposed in Collard et al., $d_C(X_\epsilon, Y_\epsilon)$ does not converge to zero as $\epsilon \to 0$. Note that $X_0 = Y_0 \notin \symp_*(3)$.
  \label{fig:Scaling_Rotation_Paper_Fig2_supplmentR_Collard0}}
\end{figure}

\setcounter{figure}{7}
\begin{figure}[htb]
 \centering
 \vskip -0.1in
  \includegraphics[width=0.8\textwidth]{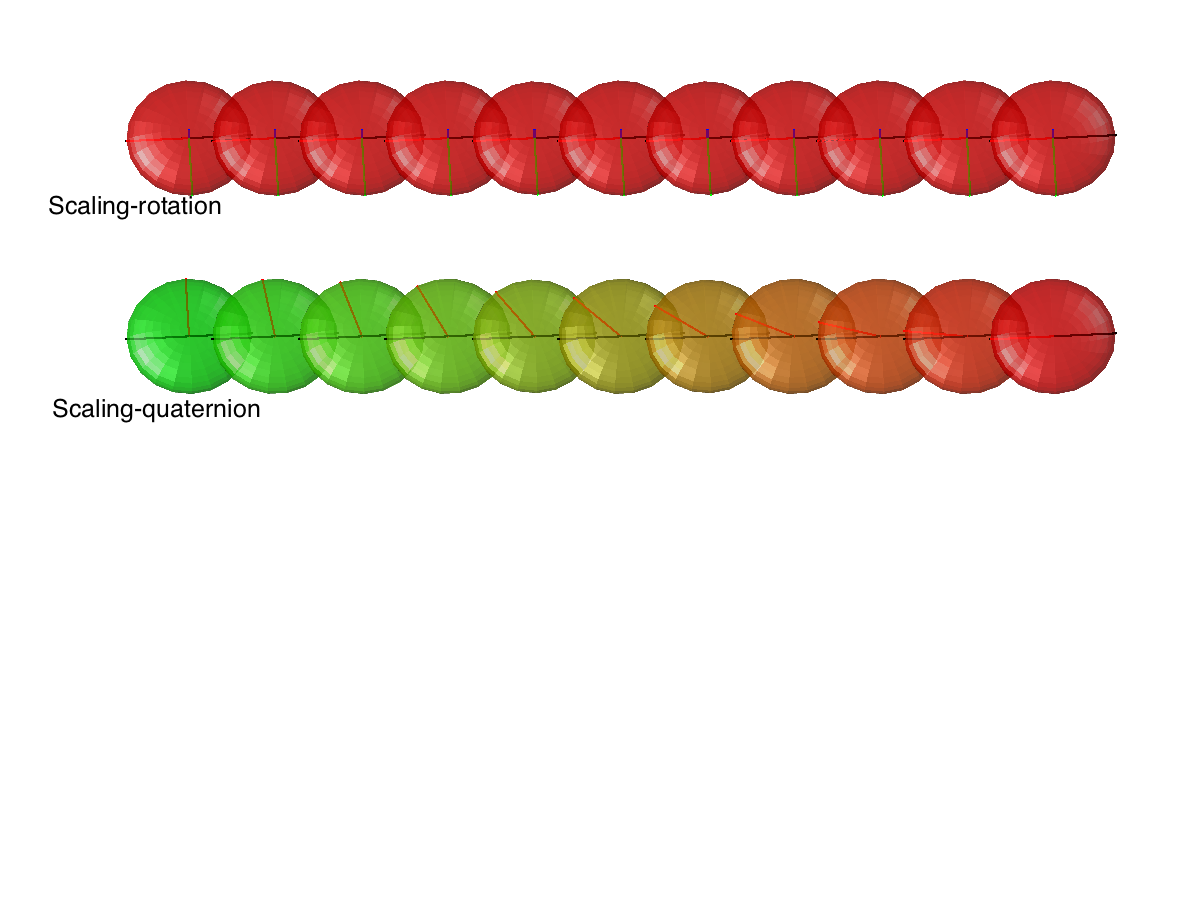}
   \vskip -1.4in
  \caption{Interpolations between $X_\epsilon$ and $Y_\epsilon$ for $\epsilon = 0.01$ (see (\ref{eq1})). Scaling--rotation interpolation in the first row shows almost negligible rotation, while scaling--quaternion interpolation by Collard et al. in the second row exhibits a large amount of rotation, illustrated by the changes in colors and by the swing of the red axis.
  \label{fig:Scaling_Rotation_Paper_Fig2_supplmentR_Collard}}
\end{figure}

One can devise a distance measure for $X_\epsilon = U_\epsilon D_\epsilon U_\epsilon'$, $Y_\epsilon = V_\epsilon\Lambda_\epsilon V_\epsilon' $ with a factor $k = k(D_\epsilon, \Lambda_\epsilon)$ for (\ref{eq34}) satisfying
\begin{align}\label{eq3}
\lim_{\epsilon \to 0 } k(D_\epsilon, \Lambda_\epsilon) = 0,
\end{align}
so that the distance between $X_\epsilon$ and $Y_\epsilon$ tends to zero  as  $\epsilon$ goes to zero.
 The factor function $k$ suggested by Collard et al. does not have this property in this example. Even if such a function $k$ satisfying (\ref{eq3}) is used, an unwanted rotation in the interpolation remains when eigenvalues are strictly ordered. In Fig.~\ref{fig:Scaling_Rotation_Paper_Fig2_supplmentR_Collard}, we see that although the initial and final ellipsoids are nearly identical to each other, the interpolation by Collard et al. exhibits a large amount of rotation, where our interpolation does not. Thus, in this example our interpolation may be viewed as more economical.


\end{document}